\newcommand{\HH}{\mathbb{H}}
\newcommand{\ZZ}{\mathbb{Z}}
\newcommand{\EE}{\mathbf{E}}
\newcommand{\BB}{\mathbf{B}}
\newcommand{\CC}{\mathbb{C}}
\newcommand{\RR}{\mathbb{R}}
\DeclareMathOperator{\Per}{Per}
\DeclareMathOperator{\per}{per}
\DeclareMathOperator{\Det}{Det}
\DeclareMathOperator{\Alt}{Alt}
\DeclareMathOperator{\Sym}{Sym}
\DeclareMathOperator{\Mat}{Mat}
\DeclareMathOperator{\sgn}{sgn}
\newcommand{\frakg}{\mathfrak{g}}
\newcommand{\frakk}{\mathfrak{k}}
\newcommand{\frakp}{\mathfrak{p}}
\newcommand{\frakm}{\mathfrak{m}}
\newcommand{\frakh}{\mathfrak{h}}
\newcommand{\fraku}{\mathfrak{u}}
\newcommand{\frako}{\mathfrak{o}}
\newcommand{\fraks}{\mathfrak{s}}
\newcommand{\fraksp}{\mathfrak{sp}}
\newcommand{\frakusp}{\mathfrak{usp}}
\newcommand{\frakgl}{\mathfrak{gl}}
\newcommand{\PD}{\mathcal{PD}}
\newcommand{\Symm}{{\mathfrak{S}}}
\newcommand{\imgunit}{\sqrt{-1}}
\newcommand{\veps}{\varepsilon}
\newcommand{\talpha}{\tilde{\alpha}}
\newcommand{\tbeta}{\tilde{\beta}}
\newcommand{\tXi}{\tilde{\Xi}}
\newcommand{\T}{\hspace*{.1ex}{}^t\hspace*{-.1ex}}
\newcommand{\comb}[2]{{\mathcal{I}^{#1}_{#2}}}
\newcommand{\bcomb}[2]{{\BAR{\mathcal{I}}^{#1}_{#2}}}
\newcommand{\BAR}[1]{\overline{#1}}
\newcommand{\dblprime}{'\hspace*{-.2ex}'}
\newcounter{thmenum}
\newenvironment{thmenumerate}{%
\begin{list}{$(\thethmenum)$}{%
\usecounter{thmenum}
\setlength{\labelsep}{.5em}
\setlength{\labelwidth}{-7pt}
\setlength{\topsep}{0pt}
\setlength{\partopsep}{0pt}
\setlength{\parsep}{0pt}
\setlength{\leftmargin}{1pt}
\setlength{\rightmargin}{0pt}
\setlength{\itemindent}{\leftmargin}
\setlength{\itemsep}{0pt}
}}
{\end{list}}
\newcommand{\bracketcite}[1]{{[\refcite{#1}]}}
\renewcommand\bracketcite{\cite}
\theoremstyle{theorem}
\newtheorem{theorem}{Theorem}[section]
\newtheorem{proposition}{Proposition}[section]
\newtheorem{lemma}{Lemma}[section]
\theoremstyle{defintion}
\newtheorem{definition}{Definition}[section]
\newtheorem{remark}{Remark}[section]
\begin{document}

\title{
A note on the Capelli identities for symmetric pairs of Hermitian type
}
\author{Kyo Nishiyama and Akihito Wachi}



\date{}
\maketitle

\begin{abstract}
We get several identities of differential operators in determinantal form.  
These identities are non-commutative versions of the formula of Cauchy-Binet or 
Laplace expansions of determinants, and if we take principal symbols, they are reduced to such classical formulas.  
These identities are naturally arising from the generators of the rings of invariant differential operators over symmetric spaces, 
and have strong resemblance to the classical Capelli identities.  
Thus we call those identities \emph{the Capelli identities for symmetric pairs}.
\end{abstract}

\def\thefootnote{\relax}
\footnote{keywords: Capelli identity, symmetric space, invariant differential operators, dual pair, Weil representation.}

\section{Introduction}

In this article we give several identities of differential operators
associated to see-saw pairs of reductive Lie groups.
These identities look similar to 
the classical Capelli identities (\bracketcite{capelli87:_uber,capelli90:_sur}): 
\begin{gather*}
\det(E_{i,j} + (n-j)\delta_{i,j}) = 
\det(x_{i,j}) \det(\partial_{i,j}) =
\det(E'_{i,j} + (j-1)\delta_{i,j}),
\\
\text{where~}
E_{i,j} = \sum_{k=1}^n x_{k,i} \partial_{k,j}, \quad
E'_{i,j} = \sum_{k=1}^n x_{j,k} \partial_{i,k}, \quad
\partial_{i,j} = \partial/\partial x_{i,j}.
\end{gather*}
%
The Capelli identities give two different determinantal expressions of the same differential operator, 
which can be interpreted as 
the image of the centers of universal enveloping algebras
corresponding to a dual pair of reductive Lie groups 
(see \bracketcite{capelli90:_sur, MR1116239, MR2139918}).  
In this setting, the original Capelli's identity corresponds to the dual pair $ GL_n(\CC) \times GL_m(\CC) $, 
and recently, there appear many kinds of generalizations in addition to 
the pioneering work of \bracketcite{capelli90:_sur, MR1116239}.  
See, for example, the references 
\bracketcite{MR99a:17018, MR2001d:17009, 
MR2000433, MR2031449, MR2139918, 
molev_nazarov99:_capel_lie, 
MR1912651, 
wachi00:_capel_verma_i,MR2048468}. 

Our identities here also have a similar interpretation.  
In fact, they can be considered as different expressions of the same differential operator 
as the image of invariant differential operators on two symmetric spaces associated to a see-saw pair.  
Thus we call our identities 
\emph{the Capelli identities for symmetric pairs}.  
Note that 
our differential operators do \emph{not necessarily} come from
the center of the universal enveloping algebras, and yet they have a determinantal expressions.  
This is a remarkable difference of our identities from the classical Capelli identities.

To explain our identities more precisely, we need some notation.  
Let 
$\fraks_0 = \mathfrak{sp}_{2N}(\RR) $ 
be a real symplectic Lie algebra, 
and let $\frakg_0, \frakk_0, \frakm_0 $ and $ \frakh_0$ be
real reductive Lie subalgebras of $\fraks_0$.  
We assume that they form a see-saw pair:
\begin{align*}
\begin{array}{cccccccc}
\frakg_0  &  {}                     &  \frakm_0  \\
\cup      &  \text{\LARGE$\times$}  &  \cup      \\
\frakk_0  &  {}                     &  \frakh_0
\end{array},
\text{~where~}
\left\{
\begin{array}{l}
\text{$(\frakg_0, \frakk_0)$ and $(\frakm_0, \frakh_0)$
  are symmetric pairs}, \\
\text{$\frakg_0 \leftrightarrow \frakh_0$ and
  $\frakm_0 \leftrightarrow \frakk_0$ 
  are dual pairs in $\fraks_0$}.
\end{array}
\right.
\end{align*}
Here $\frakg_0 \leftrightarrow \frakh_0$ is called a \emph{dual pair}
if they are mutual commutants 
in the symplectic Lie algebra $\fraks_0$,
and $(\frakg_0, \frakk_0)$ is called a \emph{symmetric pair}
if $\frakk_0$ is a fixed-point subalgebra of $\frakg_0$
under a non-trivial involution.

Recall the symplectic Lie algebra $ \fraks_0 $ has the \emph{Weil representation} $\omega$ 
(or also called the \emph{oscillator representation}) 
acting on the polynomial ring $\CC[V]$ over a Lagrangian vector space $V$ 
(see \bracketcite{MR986027, MR0463359}).  
In fact, this is a representation of the two-fold double cover of 
the symplectic group $ Sp_{2N}(\RR) $, which is called a metaplectic group.  
But we only consider the infinitesimal version of this, realized as a Harish-Chandra module.  
Thus the complexifications of Lie algebras in see-saw pairs
act on $\CC[V]$ through the (differential of) the Weil representation $\omega$.  

We denote the complexification of $\frakg_0$ by $\frakg$ etc.,
and let $\PD(V)$ be the ring of differential operators 
with polynomial coefficients on a vector space $V$.
Then we have the following picture:
\begin{align*}
U(\frakg) 
\overset{\omega}{\longrightarrow}
\PD(V)
\overset{\omega}{\longleftarrow} 
U(\frakm).
\end{align*}
Let $K $ and $ H$ be the complex Lie groups
corresponding to $\frakk$ and $\frakh$ respectively.
Thanks to the very definition of the dual pair $(\frakg_0, \frakh_0)$,
the image of $U(\frakg)$ is $H$-invariant.
Similarly the image of $U(\frakm)$ is $K$-invariant.
Thus we have the following picture
by restricting to invariant subalgebras:
\begin{align}
U(\frakg)^K
\overset{\omega}{\longrightarrow}
\PD(V)^{K\times H}
\overset{\omega}{\longleftarrow} 
U(\frakm)^H.
\end{align}
Due to the result of Howe \bracketcite{MR986027} ,
both $U(\frakg)^K$ and $U(\frakm)^H$ are mapped \emph{onto} 
$\PD(V)^{K\times H}$.
In particular, we have $\omega(U(\frakg)^K) = \omega(U(\frakm)^H)$.

In our previous paper \bracketcite{nishiyama_lee_wachi}
we studied the case 
where $(\frakg_0, \frakk_0)$ is a Hermitian symmetric pair,
and $\frakm_0$ is a compact Lie algebra.
Let $\frakg_0 = \frakk_0 \oplus \frakp_0$ 
be a Cartan decomposition of $\frakg_0$.
The algebra of $K$-invariants $S(\frakp)^K$ of 
the symmetric algebra $S(\frakp)$ 
is finitely generated.  
We take $K$-invariant elements
$X_d \in U(\frakg)^K$ $(d = 1,2, \ldots)$
whose principal symbols are the generators of $S(\frakp)^K$.  
Then $\omega(X_d)$ is in $\PD(V)^{K \times H}$,
and there exists an inverse image in $U(\frakm)^H$
thanks to $\omega(U(\frakg)^K) = \omega(U(\frakm)^H)$.
In \bracketcite{nishiyama_lee_wachi},
we determined an inverse image $C_d \in U(\frakm)^H$ satisfying
\begin{equation}
\label{abstract.Capelli.identity}
\omega(X_d) = \omega(C_d)
\qquad
(X_d \in U(\frakg)^K, \; C_d \in U(\frakm)^H) .
\end{equation}
We call this formula a 
\emph{Capelli identity for a symmetric pair},
and $X_d$ and $C_d$ \emph{Capelli elements}.

In this article, we flip the role of symmetric pairs
$(\frakg_0, \frakk_0)$ and $(\frakm_0, \frakh_0)$,
and establish the similar identities.  

Namely, let $(\frakm_0, \frakh_0)$ be a Hermitian symmetric pair,
and assume $\frakg_0$ to be compact.
Let $\frakg_0 = \frakk_0 \oplus \frakp_0$ be
the $(\pm 1)$-eigenspace decomposition with respect to the involution.
We then take $K$-invariant elements $X_d \in U(\frakg)^K$
whose principal symbols are generators of $S(\frakp)^K$,
and try to find $H$-invariant elements $C_d \in U(\frakm)^H$ satisfying 
\eqref{abstract.Capelli.identity}.  
There are three see-saw pairs which fit into our setting:
\begin{gather}
\nonumber
\text{see-saw pairs with $\frakm_0$ Hermitian type,
  $\frakg_0$ compact}
\\
\label{table:see-saw-pairs}
\begin{array}{c@{\;}|@{\;\;}l@{\;\;}c@{\;\;}c@{\;\;}c@{\;\;}c}
  & \hfil \fraks_0 & \frakg_0 & \frakk_0 & \frakm_0 & \frakh_0 
\\ \hline
\text{Case $\RR$} 
& \fraksp_{2mn}(\RR) 
& \fraku_m & \frako_m(\RR) & \fraksp_{2n}(\RR) & \fraku_n
\\
\text{Case $\CC$} 
& \fraksp_{2m(p+q)}(\RR) 
& \fraku_m \oplus \fraku_m & \fraku_m 
& \fraku_{p,q} & \fraku_p \oplus \fraku_q
\\
\text{Case $\HH$} 
& \fraksp_{4mn}(\RR) 
& \fraku_{2m} & \frakusp_m & \frako^\ast_{2n} & \fraku_n
\end{array}
\end{gather}
We get the Capelli identities for symmetric pairs in a complete form
only for Case $\CC$ in the table above.
For Cases $\RR$ and $\HH$,
we only have explicit expressions of $\omega(X_d) \in \PD(V)$, 
and do not get $C_d \in U(\frakm)^H$ up to now.  

In Section~\ref{sec:C}
we prove the Capelli identities for Case $\CC$,
and in Sections~\ref{sec:R} and \ref{sec:H}
we give the equations up to $\PD(V)$ for Cases $\RR$ and $\HH$,
respectively.
In addition,
we show a formula for coefficients appearing 
in the Capelli identities in Appendix~\ref{sec:c^d_J}.

\section{Case $\CC$}
\label{sec:C}

Here we establish two kinds of the Capelli identities for symmetric pairs
for Case $\CC$ in Table  (\ref{table:see-saw-pairs}).
The first identity in Subsection~\ref{subsec:C-1} has a simple expression 
as differential operators in $\PD(V)$,
but their inverse images (Capelli elements) $X_d \in U(\frakg)^K$
are more complicated than the second one 
in Subsection~\ref{subsec:C-2}.
The second identity has a simple Capelli element $X_d \in U(\frakg)^K$,
for which it is easy to see the relation to generators of 
$S(\frakp)^K$.
These two types of the Capelli identities become equal 
when taking principal symbols.
Therefore they should be translated to each other by
$\CC$-linear combinations.

\subsection{Formulas for the Weil representation}

In this subsection, we give explicit formulas for the Weil representation.  
Let us recall our see-saw pair:
\begin{align*}
\begin{array}{cccccccc}
\frakg_0 &=& \fraku_m \oplus \fraku_m & & \fraku_{p,q} &=& \frakm_0 \\[-.2ex]
&&           \cup     & \text{\LARGE$\times$} & \cup \\[-.3ex]
\frakk_0 &=& \fraku_m & & \fraku_p \oplus \fraku_q &=& \frakh_0
\end{array}
\end{align*}
Here $\frakh_0 = \fraku_p \oplus \fraku_q$ 
is diagonally embedded into the Lie algebra $\frakm_0 = \fraku_{p,q}$ of the indefinite unitary group 
(note that $ \frakh_0 $ is the Lie algebra of a maximal compact subgroup of $ U(p, q) $); 
and $\frakg_0$ decomposes into $\frakk_0$ and $\frakp_0$ as follows:
\begin{gather*}
\frakg_0 = \frakk_0 \oplus \frakp_0,\qquad
\frakk_0 = \{ (A, -\T A) \in \frakg_0 \}, \qquad
\frakp_0 = \{ (A, \T A) \in \frakg_0 \}.
\end{gather*}
The real Lie algebra $\frakm_0$ is
embedded into $\fraksp_{2m(p+q)}(\RR)$ as follows.
\begin{gather*}
\begin{array}{cc@{}ccccccc}
\frakm_0 &=& \fraku_{p,q} & \hookrightarrow & \fraksp_{2m(p+q)}(\RR) \\[5pt]
&& A + \sqrt{-1} B & \mapsto &
\begin{pmatrix}
A^{\oplus m} & (-B I_{p,q})^{\oplus m} \\
(I_{p,q} B)^{\oplus m} & (I_{p,q} A I_{p,q})^{\oplus m}
\end{pmatrix}
\end{array}
\quad (A, B \in \Mat(p+q; \RR)),
\end{gather*}
where $I_{p,q}$ and $A^{\oplus m}$ is defined by 
\begin{align*}
I_{p,q} &=
\begin{pmatrix} 1_p & 0 \\ 0 & -1_q \end{pmatrix},
\qquad
A^{\oplus m} = 1_m \ast A =
\begin{smallpmatrix}
A & 0 & \cdots & 0 \\
0 & A &        & 0 \\[-.6ex]
\vdots & & \ddots & \vdots \\
0 & \cdots & 0 & A
\end{smallpmatrix}
\end{align*}
($m$ copies of $A$ on the diagonal).  
The real Lie algebra $\frakg_0$ is
embedded into $\fraksp_{2m(p+q)}(\RR)$ as follows.
{
\newcommand{\hatp}{\widehat{p}}
\newcommand{\hatq}{\widehat{q}}
\begin{multline*}
    \frakg_0 = \fraku_m \oplus \fraku_m \ni 
    (A + \sqrt{-1}B, A' + \sqrt{-1}B') \longmapsto 
    \\
    \begin{pmatrix}
      A \ast I_{p,\hatq} + A' \ast I_{\hatp,q} & \quad
      -B \ast I_{p,\hatq} - B' \ast I_{\hatp,q} \\
      B \ast I_{p,\hatq} + B' \ast I_{\hatp,q} & \quad
      A \ast I_{p,\hatq} + A' \ast I_{\hatp,q} 
    \end{pmatrix} 
    \in \fraksp_{2m(p+q)}(\RR) ,
\end{multline*}
where $A, B, A', B'$ are real matrices; and 
$I_{p,\hatq} , I_{\hatp,q}$ are defined by
\begin{align*}
I_{p,\hatq} = 
\begin{pmatrix} 1_p & 0 \\ 0 & 0_q \end{pmatrix}, 
\quad
I_{\hatp,q} = 
\begin{pmatrix} 0_p & 0 \\ 0 & 1_q \end{pmatrix}
\quad \in \Mat(p+q; \RR).  
\end{align*}
}
The notation $A \ast X$ means the Kronecker product
\begin{align*}
A \ast X = 
\begin{pmatrix}
a_{11} X & a_{12} X & \cdots & a_{1m} X \\
a_{21} X & a_{22} X & \cdots & a_{2m} X \\[-5pt]
\vdots  & \vdots   & \ddots & \vdots \\[-5pt]
a_{m1} X & a_{m2} X & \cdots & a_{mm} X
\end{pmatrix}.
\end{align*}

Let $V = \Mat(m, p; \CC) \oplus \Mat(m,q;\CC)$, 
and denote the Weil representation of $\fraksp_{2m(p+q)}(\CC)$ 
on the polynomial ring $\CC[V]$ by $\omega$.
Through the embeddings into $\fraksp_{2m(p+q)}(\CC)$,
the complexified Lie algebras $\frakg$ and $\frakm$
act on $\CC[V]$.
We denote these representations also by $\omega$.
Let $x_{s,i}$ ($1 \le s \le m, 1 \le i \le p$) 
and $y_{s,i}$ ($1 \le s \le m, 1 \le i \le q$) 
be the natural linear coordinate system of $V$,
and define the following matrices.
\begin{align*}
X &=  (x_{s,i})_{\substack{1 \le s \le m \\ 1 \le i \le p}} , 
\quad
\partial^X =
(\partial/\partial x_{s,i})_{\substack{1 \le s \le m \\ 1 \le i \le p}}
\quad
\in \Mat(m,p; \PD(V)),
\\
Y &=  (y_{s,i})_{\substack{1 \le s \le m \\ 1 \le i \le q}} ,
\quad
\partial^Y =
(\partial/\partial y_{s,i})_{\substack{1 \le s \le m \\ 1 \le i \le q}}
\quad
\in \Mat(m,q; \PD(V)),
\\
P &=  (X, \partial^Y) , 
\quad
Q =  (\partial^X, Y) 
\quad
\in \Mat(m,p+q; \PD(V)).
\end{align*}
The actions of basis elements of
$\frakg = \frakgl_m(\CC) \oplus \frakgl_m(\CC)$ is given by
\begin{align*}
\omega( (E_{s,t}, 0) ) &= 
{\textstyle\sum\limits_{i=1}^p x_{s,i} \partial^X_{t,i}} + \dfrac{p}{2} \delta_{s,t}, 
\quad 
\omega( (0, E_{s,t}) ) = 
{\textstyle\sum\limits_{i=1}^q y_{s,i} \partial^Y_{t,i}} + \dfrac{q}{2} \delta_{s,t}
\end{align*}
$(1 \le s,t \le m)$,
where $E_{s,t} \in \frakgl_m(\CC)$ denotes the matrix unit
with $1$ at the $(s,t)$-entry,
and $\delta_{s,t}$ denotes Kronecker's delta.
For this, see (4.5) of \bracketcite{MR1845714} for example.  
Note that $x_{s,i}$ in this article
corresponds to $x_{(p+q)(s-1)+i}$ in \bracketcite{MR1845714},
and $y_{s,i}$ in this article
corresponds to $x_{(p+q)(s-1)+p+i}$ in \bracketcite{MR1845714}.
The action of $\frakm = \frakgl_{p+q}(\CC)$ is given by
\begin{align*}
&
\omega\bigl(
  \begin{pmatrix} E_{i,j} & 0 \\ 0 & 0 \end{pmatrix}
\bigr) \! = \!\!
\textstyle
\sum\limits_{s=1}^m x_{s,i} \partial^X_{s,j} + \! \dfrac{m}{2}\delta_{i,j}, 
\quad 
\omega\bigl(
  \begin{pmatrix} 0 & 0 \\ 0 & E_{i,j} \end{pmatrix}
\bigr) \! = \!
\textstyle
- \!\! \sum\limits_{s=1}^m y_{s,j} \partial^Y_{s,i} - \! \dfrac{m}{2}\delta_{i,j}, 
\\
&
\omega\bigl(
  \begin{pmatrix} 0 & E_{i,j} \\ 0 & 0 \end{pmatrix}
\bigr) =
\textstyle
\sqrt{-1} \sum\limits_{s=1}^m x_{s,i} y_{s,j},
\qquad 
\omega\bigl(
  \begin{pmatrix} 0 & 0 \\ E_{j,i} & 0 \end{pmatrix}
\bigr) =
\textstyle
\sqrt{-1} \sum\limits_{s=1}^m \partial^X_{s,i} \partial^Y_{s,j},
\end{align*}
where $E_{i,j}$'s should be interpreted in the suitable sizes
(namely $p\times p$, $q\times q$, $p\times q$ or $q\times p$).
Note that we used the normalization 
$x_{s,i} \mapsto \sqrt{2} x_{s,i}$ and
$y_{s,i} \mapsto -\sqrt{-2} y_{s,i}$ comparing to
(4.5) of \bracketcite{MR1845714}.

It is convenient to express the formulas of the Weil representation 
in a matrix form.
We define matrices by arranging basis elements of
$\frakg = \frakgl_m(\CC) \oplus \frakgl_m(\CC)$ or
$\frakm = \frakgl_{p+q}(\CC)$.
Set
\begin{gather}
  \label{eq:caseC-def-of-EX-EY}
  \EE^X = \bigl( (E_{s,t}, 0) \bigr)_{1\le s,t \le m},
  \qquad
  \EE^Y = \bigl( (0, E_{s,t}) \bigr)_{1\le s,t \le m},
  \\
  \label{eq:caseC-def-of-B}
  \BB' = (E_{i,j})_{1 \le i,j \le p+q},
  \qquad
  \BB = 
  \begin{pmatrix} 1_p & 0 \\ 0 & -\sqrt{-1} \,1_q \end{pmatrix}
  \BB'
  \begin{pmatrix} 1_p & 0 \\ 0 & -\sqrt{-1} \,1_q \end{pmatrix},
  \\
  \nonumber
  ( \EE^X, \EE^Y \in \Mat(m; U(\frakg)), \quad
    \BB', \BB \in \Mat(p+q; U(\frakm)) ).
\end{gather}
Then we can write the formulas above in a matrix form:
\begin{gather}
  \label{eq:caseC-image-of-EX-EY}
  \omega(\EE^X) = X \T \partial^X + \frac{p}{2} 1_m,
  \qquad
  \omega(\T \EE^Y) = \partial^Y \T Y - \frac{q}{2} 1_m,
  \\
  \label{eq:caseC-image-of-EX+tEY}
  \omega(\EE^X + \T\EE^Y) = P \T Q + \frac{p-q}{2} 1_m,
  \\
  \label{eq:caseC-image-of-B}
  \omega(\BB) = \T P Q + \frac{n}{2} I_{p,q}.
\end{gather}
For example,
$\omega(\EE^X)$ is by definition the $m\times m$ matrix
whose $(s,t)$-entry is $\omega(\EE^X_{s,t})$,
and it is equal to the $(s,t)$-entry of the $m\times m$ matrix 
$X \T \partial^X + (p/2) 1_m$.

Let
$\comb{m}{d} = \{ S \subset \{1,2,\ldots,m\} ; \#S = d \}$,
and $A_{S,T}$ denotes the submatrix of an $m\times m$ matrix $A$
with its rows and columns chosen from 
$S, T \subset \comb{m}{d}$.
It is not so difficult to see that
\begin{align*}
  {\textstyle\sum\nolimits_{S \in \comb{m}{d}} \det( \EE^X + \T\EE^Y )_{S,S}}
  \in S(\frakp)^K
  \quad
  (d = 1, 2, \ldots, m)
\end{align*}
is a generating set of $S(\frakp)^K$,
where $\EE^X$ and $\EE^Y$ are considered as 
matrices with entries $E_{s,t}$ in $S(\frakg)$.

\subsection{Capelli identity for Case $\CC$ {\upshape (1)}}
\label{subsec:C-1}

Here we give the first form of the Capelli identities for Case $\CC$.
We prove the identities in Subsection~\ref{subsec:C-1-pf}.
This form of the identities has a simple expression 
as differential operators,
but their inverse images (Capelli elements) $X_d \in U(\frakg)^K$
are more complicated than the second one 
given in Subsection~\ref{subsec:C-2}.

Let us recall the picture of the Capelli identities
for Case $\CC$:
\begin{align}
\label{eq:CaseC-picture}
\begin{array}{ccccccc}
U(\frakg)^K & \xrightarrow{\quad\omega\quad}
& \PD(V)^{K\times H} &
\xleftarrow{\quad\omega\quad}
& U(\frakm)^H 
\\
|| &&&& ||\\
\makebox[0pt][c]{
  $U(\frakgl_m(\CC) \oplus \frakgl_m(\CC))^{GL_m(\CC)}$}
&&&&
\makebox[0pt][c]{$U(\frakgl_{p+q}(\CC))^{GL_p(\CC)\times GL_q(\CC)}$}
\end{array}
\end{align}
We first give the formula 
which corresponds to $U(\frakg)^K \to \PD(V)^{K\times H}$
in Proposition~\ref{prop:caseC-easy-left-half}.
We often write the elements of $S \in \comb{m}{d}$ by
$S(1), S(2), \ldots, S(d)$ 
or $s_1, s_2, \ldots, s_d$
in increasing order.
For two disjoint index sets 
$S' \in \comb{m}{d}$ and $S'' \in \comb{m}{m-d}$,
let $l(S',S'')$ denotes the inversion number 
of the concatenated sequence $(S', S'')$.

\begin{definition}
\label{defn:det-param}
We define the column-determinant with diagonal parameters 
$ u = ( u_1 , u_2 , \dots , u_l ) $ by
\begin{align*}
\det(A_{S',T'}; u )
& =
\sum\nolimits_{\sigma\in\Symm_l} \sgn(\sigma)
(A_{S'(\sigma(1)),T'(1)} + u_1 \delta_{S'(\sigma(1)),T'(1)})
\cdots
\\[-5pt]
& \hspace{.3\textwidth}
\cdots
(A_{S'(\sigma(d)),T'(d)} + u_d \delta_{S'(\sigma(d)),T'(d)})
\\[3pt]
&=
\det( A_{S'\!,T'} + 1_{S'\!,T'}
\left(\begin{smallmatrix}
u_1     &   0       &  \cdots  &   0  \\
0       &   u_2     &  \cdots  &   0  \\
\vdots  &&  \ddots  &  \vdots  \\
0       &   0       &  \cdots  &   u_d
\end{smallmatrix}\right)
),
\end{align*}
for an $m\times m$ matrix $A$
and $S', T' \in \comb{m}{l}$.  
Here $ 1_{S', T'} $ denotes the $ ( S' , T' ) $-submatrix of the identity matrix.
\end{definition}

\begin{proposition} 
\label{prop:caseC-easy-left-half}  
For $S, T \in \comb{m}{d}$, the invariant differential operator \linebreak
$\sum_{J\in\comb{p+q}{d}} \det P_{S,J} \det Q_{T,J} \in \PD(V)$ 
can be expressed as an image of
$U(\frakg) = U(\frakgl_m(\CC) \oplus \frakgl_m(\CC))$
under the Weil representation $\omega$
as follows.
\begin{align*}
  \lefteqn{
    \sum\nolimits_{J\in\comb{p+q}{d}} \det P_{S,J} \det Q_{T,J}}
  \\
  &=
  \sum_{l=0}^d 
  \sum_{\substack{
      S'\!,T' \\
      S\dblprime\!,T\dblprime}}
  (-1)^{\scriptscriptstyle l(S'\!,S\dblprime) + l(T'\!,T\dblprime)}
\det( \omega(\EE^X)_{S'\!,T'}; \alpha) \, 
\det( \omega(\T\EE^Y)_{S\dblprime\!,T\dblprime}; \beta),
\end{align*}
where the second summation is taken over 
$      S',T' \in \comb{m}{l}, \;  S\dblprime,T\dblprime \in \comb{m}{d-l} $ such that 
$ S' \amalg S\dblprime = S $ and $ T' \amalg T\dblprime = T $; 
and $ \alpha $ and $ \beta $ denote
\begin{align*}
\alpha &= \textstyle ( l-1-\frac{p}{2}, l-2-\frac{p}{2}, \ldots, -\frac{p}{2}) , \\
\beta &= \textstyle (-(d-l-1)+\frac{q}{2}, -(d-l-2)+\frac{q}{2}, \ldots, \frac{q}{2}).
\end{align*}
\end{proposition}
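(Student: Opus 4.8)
The plan is to reduce the asserted identity to two independent instances of the classical Capelli identity in minor form---one in the $X$-variables, one in the $Y$-variables---by a Laplace expansion of $\det P_{S,J}$ and $\det Q_{T,J}$ followed by a commutation argument. First I would record the commutation pattern of the entries of $P = (X, \partial^X)$\,---\,that is $P=(X,\partial^Y)$\,---\,and $Q = (\partial^X, Y)$. Internally all entries of $P$ commute (each $x_{s,i}$ commutes with every $\partial^Y_{t,j}$), and likewise all entries of $Q$ commute, so $\det P_{S,J}$ and $\det Q_{T,J}$ are honest determinants. Across $P$ and $Q$ the only noncommuting pairs are $x_{s,i}$ with $\partial^X_{t,j}$ and $y_{s,i}$ with $\partial^Y_{t,j}$; in particular the two derivative blocks $\partial^X$ and $\partial^Y$ commute with each other, and the variable set $(x,\partial^X)$ is completely decoupled from $(y,\partial^Y)$.

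Next, for each $J \in \comb{p+q}{d}$ write $J = J_1 \amalg J_2$ with $J_1 \subseteq \{1,\dots,p\}$ indexing the $X$-columns of $P$ (equivalently the $\partial^X$-columns of $Q$) and $J_2$ the $\partial^Y$-columns (equivalently the $Y$-columns); put $l = \#J_1$. Since the $J_1$-columns precede the $J_2$-columns, a generalized Laplace expansion along the first $l$ columns gives
\begin{align*}
\det P_{S,J} &= \sum_{S'\amalg S''=S}(-1)^{l(S',S'')}\det X_{S',J_1}\det(\partial^Y)_{S'',J_2},\\
\det Q_{T,J} &= \sum_{T'\amalg T''=T}(-1)^{l(T',T'')}\det(\partial^X)_{T',J_1}\det Y_{T'',J_2},
\end{align*}
the sums ranging over splits with $\#S'=\#T'=l$; the Laplace sign $(-1)^{\sum a + l(l+1)/2}$ equals $(-1)^{l(S',S'')}$ by the standard relation between the block positions and the inversion number of the concatenation. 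Multiplying, the factor $\det(\partial^Y)_{S'',J_2}$ commutes past $\det(\partial^X)_{T',J_1}$, so each product rearranges---with \emph{no} commutator correction---into an $X$-part $\det X_{S',J_1}\det(\partial^X)_{T',J_1}$ times a $Y$-part $\det(\partial^Y)_{S'',J_2}\det Y_{T'',J_2}$. Because the $X$-part depends only on $(x,\partial^X)$ and $J_1$ while the $Y$-part depends only on $(y,\partial^Y)$ and $J_2$, the sum $\sum_J = \sum_{l}\sum_{J_1}\sum_{J_2}$ factors, for each fixed $l$ and each split, as
\begin{align*}
\Bigl(\sum_{J_1\in\comb{p}{l}}\det X_{S',J_1}\det(\partial^X)_{T',J_1}\Bigr)\Bigl(\sum_{J_2\in\comb{q}{d-l}}\det(\partial^Y)_{S'',J_2}\det Y_{T'',J_2}\Bigr),
\end{align*}
carrying the prefactor $(-1)^{l(S',S'')+l(T',T'')}$. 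This already has the shape of the claimed right-hand side.

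Finally I would identify each factor through the classical Capelli identity. For the $X$-part, $\sum_i x_{s,i}\partial^X_{t,i}$ is the $(s,t)$-entry of $X\,\T\partial^X = \omega(\EE^X) - \tfrac{p}{2}1_m$, and the minor Capelli identity yields $\sum_{J_1}\det X_{S',J_1}\det(\partial^X)_{T',J_1} = \det(\omega(\EE^X)_{S',T'};\alpha)$ once the standard Capelli shift $(l-1,\dots,0)$, implemented through $1_{S',T'}$, is written as $\alpha = (l-1-\tfrac{p}{2},\dots,-\tfrac{p}{2})$ to absorb the constant $\tfrac{p}{2}$. For the $Y$-part the derivatives stand to the left, so one uses the ``derivative-first'' Capelli identity---obtained from the standard one by the symmetry $y\leftrightarrow\partial^Y$ of the Weyl algebra---for the matrix $\partial^Y\,\T Y = \omega(\T\EE^Y) + \tfrac{q}{2}1_m$, giving $\sum_{J_2}\det(\partial^Y)_{S'',J_2}\det Y_{T'',J_2} = \det(\omega(\T\EE^Y)_{S'',T''};\beta)$ with the reversed shift $(-(d-l-1),\dots,0)$ repackaged as $\beta$. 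Substituting these two evaluations completes the proof. The main obstacle is precisely this last step: producing the two Capelli identities in exactly the right normalization---the derivative-first version and the correct, position-dependent diagonal shifts placed via $1_{S',T'}$---and verifying that the constants $\pm p/2$ and $\mp q/2$ coming from $\omega$ combine with the Capelli shifts to give precisely $\alpha$ and $\beta$; the sign bookkeeping in the Laplace step is routine by comparison.
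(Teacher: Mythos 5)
Your argument is correct, and it reaches the identity by a genuinely different route than the paper. You reduce the general $(p,q)$ statement to two independent classical inputs: since the entries of $P$ commute among themselves (and likewise for $Q$), the generalized Laplace expansions of $\det P_{S,J}$ and $\det Q_{T,J}$ along the split $J = J_1 \amalg J_2$ are literally valid, the inversion-number signs $(-1)^{l(S'\!,S\dblprime)}$, $(-1)^{l(T'\!,T\dblprime)}$ come out right, and the swap of $\det(\partial^Y)_{S\dblprime\!,J_2}$ past $\det(\partial^X)_{T'\!,J_1}$ costs nothing because the $(x,\partial^X)$ and $(y,\partial^Y)$ Weyl algebras decouple; the two resulting factors are then evaluated by the minor-form Capelli identity in its ``variables-first'' and ``derivative-first'' normalizations, with the constants $\pm p/2$, $\pm q/2$ from \eqref{eq:proof:XtdX-and-dYtY-by-EE} absorbed into $\alpha$ and $\beta$ exactly as you describe (the negative shift $(-(d-l-1),\ldots,0)$ for the $Y$-part reflects the opposite sign in the commutation relation \eqref{eq:comm-rel-of-P-Q}, and can be obtained from the standard version by the Fourier automorphism $y \mapsto \partial^Y$, $\partial^Y \mapsto -y$). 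The paper instead runs a single self-contained exterior-algebra computation: it packages everything into $\alpha_j, \beta_j, \Xi_X, \Xi_Y$ in $\bigwedge(\CC^m \oplus (\CC^m)^\ast)\otimes_\CC \PD(V)$, where your Laplace expansion and decoupling appear as the splitting of $\sum_J \alpha_J\beta_J$ and the extraction of the coefficient of $e_S e^\ast_T$, and your two Capelli inputs are proved on the spot via the commutation relations \eqref{eq:comm-rel-of-alpha-XiX}--\eqref{eq:comm-rel-of-alpha-XiY}, first in symmetrized-determinant form (Proposition~\ref{prop:caseC-easy-left-half-Det}) and then converted to column-determinants by Lemma~\ref{lem:Det-to-det}. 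One point worth making explicit in your write-up: the two minor Capelli identities you cite are precisely the extreme cases $q=0$ and $p=0$ of the proposition itself, so your proof is a reduction of the general case to these classical degenerations rather than a proof from scratch; that is legitimate (they are in the literature, e.g.\ in the minor-form Capelli identities of Howe--Umeda and Itoh), but it is the one nontrivial input you take on faith, whereas the paper's uniform mechanism also powers the proofs of Proposition~\ref{prop:caseC-easy-right-half} and of the Case (2) identity. What your route buys is modularity and the complete avoidance of the symmetrized determinant $\Det$ and the $\Det$-to-$\det$ conversion; what the paper's buys is self-containedness and reusability of the machinery.
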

The proof is given in \S~\ref{subsubsec:C-1-pf-left-half}.

\begin{remark}
\label{rmk:cauchy-binet}
In general,
for $n\times n$ matrices $A, B$ with commutative entries,
we have 
\begin{gather*}
\textstyle
  \det(A+B) =
  \sum\limits_{l=0}^n
  \sum\limits_{\substack{
      S'\!,T'\!, S\dblprime\!,T\dblprime}}
  (-1)^{\scriptscriptstyle l(S'\!,S\dblprime) + l(T'\!,T\dblprime)} 
  \det A_{S'\!,T'} \det B_{S\dblprime\!,T\dblprime},
\end{gather*}
where the second summation is taken over 
$      S',T' \in \comb{m}{l}, \;  S\dblprime,T\dblprime \in \comb{m}{d-l} $ such that 
$ S' \amalg S\dblprime = T' \amalg T\dblprime = \{1,\ldots,n\} $.  
Hence the right-hand side of 
Proposition~\ref{prop:caseC-easy-left-half}
is equal to $\det( \omega(\EE^X + \T\EE^Y) )_{S,T}$,
when taking the principal symbols in 
the symmetric algebra $S(\frakg)$.  
Therefore Proposition~\ref{prop:caseC-easy-left-half}
can be regarded as a non-commutative version of 
\begin{gather*}
  \sum\nolimits_{J\in\comb{p+q}{d}} \det A_{S,J} \det B_{T,J} =
  \det(A \T B)_{S,T}
  \qquad
  (\text{Cauchy-Binet}).
\end{gather*}
\end{remark}

We next give the formula which corresponds to
$\PD(V)^{K\times H} \leftarrow U(\frakm)^H$.  
To state it, we need a variant of the symmetrized determinant.

\begin{definition}
\label{defn:Dets}
For an $n \times n$ matrix $A$,
we define 
\begin{align*}
\Det(A) =
\frac{1}{n!} \sum\nolimits_{\sigma,\tau\in\Symm_n} \sgn(\sigma) \sgn(\tau)
A_{\sigma(1),\tau(1)} A_{\sigma(2),\tau(2)} \cdots 
A_{\sigma(n),\tau(n)},
\end{align*}
which is called the \emph{symmetrized determinant}.  
Also we define the symmetrized determinant with diagonal parameters 
$ u = ( u_1, u_2, \dots , u_n ) $ by
\begin{align*}
\Det(A; u) 
&=
\frac{1}{n!} \sum\nolimits_{\sigma,\tau\in\Symm_n} \sgn(\sigma) \sgn(\tau)
(A_{\sigma(1),\tau(1)} + u_1 \delta_{\sigma(1),\tau(1)}) \cdots
\\ & \hspace{30ex}
\cdots
(A_{\sigma(n),\tau(n)} + u_n \delta_{\sigma(n),\tau(n)}).
\end{align*}
Next, we define a minor of the symmetrized determinant with uneven diagonal shift.
Let $p, q$ be non-negative integers and $n = p+q$.
For an $n\times n$ matrix $B$, $I, J \in \comb{n}{d}$ and diagonal parameters 
$ u = ( u_1, u_2, \dots , u_d ) $, 
we define
\begin{align*} 
  \Det_{p,q}(B_{I,J}; u) &=
\frac{1}{d!}
\sum_{\sigma, \tau\in\Symm_d} \!\! \sgn(\sigma) \sgn(\tau)
(B_{i_{\sigma(1)},j_{\tau(1)}} \! - u_1 \veps_{i_{\sigma(1)},j_{\tau(1)}})
\cdots \\[-.5ex]
& \hspace*{.35\textwidth}
\cdots (B_{i_{\sigma(d)},j_{\tau(d)}} \! - u_d \veps_{i_{\sigma(d)},j_{\tau(d)}}),
\end{align*}
where 
$ I = \{ i_1, i_2, \dots , i_d \} , \; 
  J = \{ j_1, j_2, \dots , j_d \} $ and 
$\veps_{i,j}$ is a variant of Kronecker's delta defined by
\begin{align*}
\veps = 
-I_{p,q} =
\begin{pmatrix} -1_p & 0 \\ 0 & 1_q \end{pmatrix}.
\end{align*}
\end{definition}

\begin{proposition} 
\label{prop:caseC-easy-right-half}  
\quad
For $I, J \in \comb{n}{d}$ and $ n = p + q $,
the differential operator \linebreak
$\sum_{S\in\comb{m}{d}} \det P_{S,I} \det Q_{S,J} \in \PD(V)$
can be expressed as an image of $U(\frakm)$
under the Weil representation $\omega$ as follows.
\begin{align*}
{\textstyle\sum\nolimits_{S\in\comb{m}{d}} \det P_{S,I} \det Q_{S,J}}
&=
\Det_{p,q}( \bigl( \omega(\BB) - \frac{n}{2} I_{p,q} \bigr)_{\! I,J};
d-1, d-2, \ldots, 0).
\end{align*}
\end{proposition}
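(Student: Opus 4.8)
The first move is to unwind the right-hand side using \eqref{eq:caseC-image-of-B}, which gives $\omega(\BB)-\frac{n}{2}I_{p,q}=\T P Q$. Thus the assertion is the purely differential-operator identity
\[
\sum_{S\in\comb{m}{d}} \det P_{S,I}\,\det Q_{S,J}
= \Det_{p,q}\bigl((\T P Q)_{I,J};\, d-1,d-2,\ldots,0\bigr),
\qquad (\T P Q)_{i,j}=\sum_{s=1}^{m} P_{s,i}Q_{s,j}.
\]
When all entries commute this is nothing but the Cauchy--Binet formula $\det(\T P Q)_{I,J}=\sum_S \det P_{S,I}\det Q_{S,J}$, in the spirit of Remark~\ref{rmk:cauchy-binet}; so the proposition is a non-commutative refinement, and the whole content lies in the $\veps$-shifts with the arithmetic parameters $d-1,\ldots,0$, which encode the normal-ordering corrections.

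The essential input, which I would record first, is the commutation structure of the entries of $P=(X,\partial^Y)$ and $Q=(\partial^X,Y)$. Because the two blocks of $P$ involve only the $x$- and $y$-variables and never their conjugate derivatives, all entries of $P$ commute with one another, and likewise all entries of $Q$; hence each minor $\det P_{S,I}$ and $\det Q_{S,J}$ is an ordinary commutative determinant, with no ambiguity of expansion convention. The only nontrivial brackets arise when a $Q$-entry meets a $P$-entry of the \emph{same} row index, and the Heisenberg relations give $[Q_{t,j},P_{s,i}]=-\delta_{s,t}\,\veps_{i,j}$, where $\veps=-I_{p,q}$ is exactly the sign matrix of Definition~\ref{defn:Dets}. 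This single relation is what couples the combinatorics to the $\Det_{p,q}$ construction.

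With this in hand the plan is to expand both sides into monomials in the $P$- and $Q$-entries and match them. On the right, expanding $\Det_{p,q}$ together with each factor $(\T P Q)_{i,j}=\sum_s P_{s,i}Q_{s,j}$ produces a sum over \emph{all} index tuples $(s_1,\ldots,s_d)$, with the $P$- and $Q$-factors interleaved. The tuples with pairwise distinct $s_k$ reassemble, exactly as in the classical Cauchy--Binet mechanism, into the left-hand side $\sum_{S}\det P_{S,I}\det Q_{S,J}$. The tuples in which some indices coincide are the only ones that can generate commutators (since $[Q_{t,j},P_{s,i}]=0$ for $s\neq t$), and the role of the explicit diagonal parameters $d-1,d-2,\ldots,0$, weighted against $\veps$, is to cancel precisely these coincidence contributions. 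The symmetrization $\tfrac{1}{d!}\sum_{\sigma,\tau}$ in $\Det_{p,q}$ then collapses the residual $d!$-fold overcounting, leaving exactly the separated product on the left.

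I expect the main obstacle to be this last bookkeeping: checking that the cascade of commutator corrections reassembles, with the correct signs $\sgn(\sigma)\sgn(\tau)$, into the specific progression $(d-1,\ldots,0)$ rather than some other shifts. The cleanest route is an induction on $d$, peeling off the pair of indices of largest value: one isolates the factor built from that row, transports its $Q$-entry to the far right past the remaining $P$-entries, and counts that it can contract against $d-1$ of them, reproducing the leading shift $d-1$ while leaving a $\Det_{p,q}$ of size $d-1$ with parameters $d-2,\ldots,0$ to which the inductive hypothesis applies. Verifying that the antisymmetrizing signs survive this reshuffling intact is the delicate point; the remainder is formal manipulation.
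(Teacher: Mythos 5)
Your strategy is sound and, at the level of mechanism, coincides with the paper's: the paper likewise begins by invoking Equation~\eqref{eq:caseC-image-of-B} to reduce the proposition to the pure operator identity $\sum_{S\in\comb{m}{d}}\det P_{S,I}\det Q_{S,J}=\Det_{p,q}\bigl((\T P Q)_{I,J};d-1,d-2,\ldots,0\bigr)$, and likewise runs everything off the single bracket $[P_{s,i},Q_{t,j}]=\veps_{i,j}\delta_{s,t}$ together with the mutual commutativity of the entries of $P$ and of $Q$. The difference is execution. Instead of expanding into monomials and tracking coincidence tuples by hand, the paper works in $\bigwedge(\CC^{p+q}\oplus(\CC^{p+q})^\ast)\otimes_\CC\PD(V)$ with $\eta_s=\sum_i f_i P_{s,i}$, $\zeta_s=\sum_i f^\ast_i Q_{s,i}$, $\Lambda=\sum_s\eta_s\zeta_s$ and $\sigma=\sum_i \veps_{i,i}f_i f^\ast_i$, and computes $\sum_{S\in\comb{m}{d}}\eta_S\zeta_S$ in two ways. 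This buys exactly the point you flag as delicate: the signs $\sgn(\sigma)\sgn(\tau)$ and the collapse of the $d!$-fold overcounting are performed automatically by the anticommutativity of the $f_i,f^\ast_i$ (extracting the coefficient of $f_I f^\ast_J$ \emph{is} the antisymmetrization), and your inductive peeling step is precisely the paper's recursion: summing $\eta_{s_d}\zeta_{s_d}$ over $s_d$ produces $\Lambda$, and moving it left past the remaining $d-1$ factors $\eta_{s_k}$ via $[\Lambda,\eta_t]=\eta_t\sigma$ yields $\Lambda(d-1)=\Lambda-(d-1)\sigma$, the fermionic avatar of your observation that the transported $Q$-entry contracts against $d-1$ remaining $P$-entries (concretely, $[Q_{s,j},(\T P Q)_{i',j'}]=-\veps_{i',j}\,Q_{s,j'}$). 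Iterating gives $\frac{(-1)^{d(d-1)/2}}{d!}\Lambda(d-1)\Lambda(d-2)\cdots\Lambda(0)$, and Lemma~\ref{lem:Lambda-to-Detpq} converts this ordered product into $\Det_{p,q}$ with exactly the parameters $d-1,\ldots,0$.

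One caveat: as written, your argument is an outline at its decisive point. You assert, but do not verify, that the commutator cascade reassembles with the correct signs into the diagonal shifts and a size-$(d-1)$ instance of $\Det_{p,q}$ with parameters $(d-2,\ldots,0)$. The subtlety is real: the correction produced by transporting $Q_{s,j}$ carries a factor $\veps_{i',j}$ that pairs a $\sigma$-slot with a $\tau$-slot, and only after relabeling the summation variables under the full antisymmetrization does it match the diagonal term $u_1\veps_{i_{\sigma(1)},j_{\tau(1)}}$ of Definition~\ref{defn:Dets}; the case $d=2$ already exhibits the required cancellation and is worth writing out. So to turn your proposal into a proof, either carry out that relabeling argument inside the induction, or adopt the paper's exterior-algebra device, which encodes the entire bookkeeping once and for all.
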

The proof of this proposition is given in \S~\ref{subsubsec:C-1-pf-right-half}.

For $d \ge 1$,
we define
$X_d \in U(\frakg)^K =
U(\frakgl_m(\CC) \oplus\frakgl_m(\CC))^{GL_m(\CC)}$
by
\begin{align*}
  X_d &= \textstyle
  \sum\limits_{S\in\comb{m}{d}}
  \sum\limits_{l=0}^d
  \sum\limits_{\substack{
  S'\!,S\dblprime \\ T'\!,T\dblprime
  }}
  (-1)^{\scriptscriptstyle l(S'\!,S\dblprime)+l(T'\!,T\dblprime)}
  \det( \EE^X_{S'\!,T'}; \alpha) 
  \det( (\T\EE^Y)_{S\dblprime\!,T\dblprime}; \beta) , 
\end{align*}
where
$ \alpha = (l-1-{p}/{2}, l-2-{p}/{2}, \ldots, -{p}/{2}) , \; 
  \beta  = (-(d-l-1)+{q}/{2}, -(d-l-2)+{q}/{2}, \ldots, {q}/{2}) $, 
and the third summation is taken over 
$ S', T' \in \comb{m}{l} $ and $ S\dblprime, T\dblprime \in \comb{m}{d-l} $ which satisfy 
$ S = S' \amalg S\dblprime = T' \amalg T\dblprime $.  

Similarly we define 
$C_d \in U(\frakm)^H = 
U(\frakgl_{p+q}(\CC))^{GL_p(\CC)\times GL_q(\CC)}$ by 
\begin{align*}
  C_d &=
\textstyle
  \sum_{J\in\comb{p+q}{d}}
  \Det_{p,q}( \bigl( \BB - \frac{n}{2} I_{p,q} \bigr)_{J,J};
    d-1, d-2, \ldots, 0) .
\end{align*}
%

Finally we combine 
Propositions~\ref{prop:caseC-easy-left-half} and
\ref{prop:caseC-easy-right-half},
and obtain the Capelli identity for Case $\CC$.

\begin{theorem}[Capelli identity for Case {$\CC$} (1)]
\label{thm:caseC-easy}
Under the above notation, we have the Capelli identity
\begin{align*}
\omega(X_d) 
= 
  \sum_{\substack{S\in\comb{m}{d}, J\in\comb{p+q}{d}}}
  \det P_{S,J} \det Q_{S,J}
=
\omega(C_d).
\end{align*}
We call $X_d$ and $C_d$ \emph{Capelli elements}.
\end{theorem}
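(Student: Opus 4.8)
The plan is to assemble the theorem directly from Propositions~\ref{prop:caseC-easy-left-half} and \ref{prop:caseC-easy-right-half}, which already carry all of the computational weight; what remains is only to observe that the Weil representation $\omega$ intertwines the two determinantal constructions with their $\PD(V)$-counterparts, and then to match up the index sets against the definitions of $X_d$ and $C_d$.

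First I would establish the left-hand equality $\omega(X_d) = \sum_{S,J}\det P_{S,J}\det Q_{S,J}$. Since $\omega\colon U(\frakg)\to\PD(V)$ is a unital algebra homomorphism and the column-determinant with parameters of Definition~\ref{defn:det-param} is a fixed signed sum of ordered products of matrix entries with scalar diagonal shifts, $\omega$ commutes with it term by term; the parameters $\alpha,\beta\in\CC$ and the operation of passing to a submatrix are untouched. Applying $\omega$ to the defining expression of $X_d$ therefore replaces each $\det(\EE^X_{S',T'};\alpha)$ by $\det(\omega(\EE^X)_{S',T'};\alpha)$ and each $\det((\T\EE^Y)_{S\dblprime,T\dblprime};\beta)$ by $\det(\omega(\T\EE^Y)_{S\dblprime,T\dblprime};\beta)$. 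For each fixed $S\in\comb{m}{d}$ the inner sum over $l$ and over $S',T'\in\comb{m}{l}$, $S\dblprime,T\dblprime\in\comb{m}{d-l}$ with $S=S'\amalg S\dblprime=T'\amalg T\dblprime$ is precisely the right-hand side of Proposition~\ref{prop:caseC-easy-left-half} specialised to $T=S$, so it equals $\sum_{J\in\comb{p+q}{d}}\det P_{S,J}\det Q_{S,J}$. Summing over $S$ then yields the middle term.

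Next I would treat the right-hand equality $\sum_{S,J}\det P_{S,J}\det Q_{S,J}=\omega(C_d)$ in the same spirit. The operator $\Det_{p,q}$ of Definition~\ref{defn:Dets} is again a fixed signed sum of ordered products of matrix entries, while the shift $-\tfrac{n}{2}I_{p,q}$ and the diagonal parameters $(d-1,\dots,0)$ all lie in $\CC$; hence the homomorphism $\omega\colon U(\frakm)\to\PD(V)$ transports $\Det_{p,q}\bigl((\BB-\tfrac{n}{2}I_{p,q})_{J,J};d-1,\dots,0\bigr)$ to $\Det_{p,q}\bigl((\omega(\BB)-\tfrac{n}{2}I_{p,q})_{J,J};d-1,\dots,0\bigr)$. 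By Proposition~\ref{prop:caseC-easy-right-half} specialised to $I=J$ this equals $\sum_{S\in\comb{m}{d}}\det P_{S,J}\det Q_{S,J}$, and summing over $J\in\comb{p+q}{d}$ again recovers the middle term. Combining the two halves gives $\omega(X_d)=\sum_{S,J}\det P_{S,J}\det Q_{S,J}=\omega(C_d)$.

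I do not anticipate a genuine obstacle: the theorem is an assembly result, and the real substance is contained in the two propositions proved in \S\ref{subsubsec:C-1-pf-left-half} and \S\ref{subsubsec:C-1-pf-right-half}. The only point requiring care — routine rather than hard — is the verification that $\omega$ really does commute with $\det(\,\cdot\,;u)$ and with $\Det_{p,q}(\,\cdot\,;u)$; this holds because both are universal noncommutative-polynomial expressions in the matrix entries (the order of the factors being fixed by the column-determinant and symmetrized-determinant conventions) and $\omega$ is an algebra homomorphism fixing the scalars $\CC$, so it acts factor by factor. The remaining, purely combinatorial, task is to check that the constraints $S=S'\amalg S\dblprime=T'\amalg T\dblprime$ and $I=J$ line up exactly with the definitions of $X_d$ and $C_d$.
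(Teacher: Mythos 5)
Your proposal is correct and follows essentially the same route as the paper, which obtains Theorem~\ref{thm:caseC-easy} precisely by specialising Proposition~\ref{prop:caseC-easy-left-half} to $T=S$ and Proposition~\ref{prop:caseC-easy-right-half} to $I=J$ and summing over $S$ and $J$ respectively. Your explicit verification that $\omega$, being an algebra homomorphism fixing $\CC$, commutes with $\det(\,\cdot\,;u)$ and $\Det_{p,q}(\,\cdot\,;u)$ term by term simply spells out a routine step the paper leaves implicit in its one-line combination of the two propositions.
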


\begin{remark}
\label{rmk:to-thm:caseC-easy}
As already noted in Remark~\ref{rmk:cauchy-binet}, 
we can prove that the principal symbol of $X_d$ is 
\begin{align*}
\textstyle
  \sum_{S\in\comb{m}{d}} \det(\EE^X + \T\EE^Y)_{S,S}
  \quad\in
  S(\frakp)^K \simeq S(\Mat(m;\CC))^{GL_m(\CC)},
\end{align*}
and these invariant elements generate $S(\frakp)^K$.
Thus the left-hand side of the theorem can be considered as
the image under the Weil representation
of invariant elements corresponding to the generators of
$S(\frakp)^K$.
These invariant differential operators
are expressed by the elements of
$U(\frakm)^H = U(\frakgl_{p+q})^{GL_p(\CC)\times GL_q(\CC)}$
on the right-hand side.
\end{remark}

\subsection{Capelli identity for Case $\CC$ {\upshape (2)}}
\label{subsec:C-2}

Here we give the second form of the Capelli identities for Case $\CC$. 
This form of the identities has simpler Capelli elements
and it is easy to see the relation to  generators of $S(\frakp)^K$.
Let us recall the picture~(\ref{eq:CaseC-picture}) for Case $\CC$.
We first give the formula which corresponds to
$U(\frakg)^K \to \PD(V)^{K\times H}$.

\begin{proposition}
\label{prop:caseC-not-easy-left-half}
For $d \ge 1$, we define 
$X_d \in U(\frakg)^K $ by
\begin{align*}
  X_d = \textstyle
  \sum_{S\in\comb{m}{d}} 
  \Det(( \EE^X+\T\EE^Y - \frac{p-q}{2}1_m )_{S,S}),
\end{align*}
where $\EE^X$ and $\EE^Y$ are given in 
Equation~{\upshape(\ref{eq:caseC-def-of-EX-EY})}.
Then its image under the Weil representation $\omega$ is given by
\begin{align*}
\omega(X_d)  &= 
\textstyle
  \sum\limits_{l=0}^d \dfrac{(m-l)!}{d!(m-d)!}
  \sum_{J\in\comb{p+q}{l}} c^d_J
  \sum_{S\in\comb{m}{l}} \det P_{S,J} \det Q_{S,J}.
\end{align*}
In the above formula, 
$c^d_J$ is an integer defined by
\begin{align}
  \label{eq:def-of-c^d_J}
  & 
  c^d_J = c^d_{\alpha,\beta}
  = \textstyle
  \sum_{k\in\ZZ} \binom{\alpha+\beta}{k} (\beta-k)^d (-1)^k
\\
&
\label{eq:def-of-c^d_J-alpha-beta}
\alpha = \#\{ i \;;\; 1 \le J(i) \le p \}, \quad
\beta = \#\{ i \;;\; p+1 \le J(i) \le p+q \}, 
\end{align}
for a non-negative integer $d$ and $J \in \comb{p+q}{l}$.  
Note that 
$ d = \alpha + \beta $.  
\end{proposition}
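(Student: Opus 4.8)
The plan is to push everything through the algebra homomorphism $\omega$ and reduce the statement to a combinatorial computation of a symmetrized determinant of the single matrix $P\,\T Q$. First I would invoke the matrix identity \eqref{eq:caseC-image-of-EX+tEY}, which gives $\omega(\EE^X+\T\EE^Y-\tfrac{p-q}{2}1_m)=P\,\T Q$ entrywise; since $\Det$ is a (noncommutative) polynomial in the entries and $\omega$ is an algebra homomorphism, this yields
\[
\omega(X_d)=\sum_{S\in\comb{m}{d}}\Det\bigl((P\,\T Q)_{S,S}\bigr),
\qquad
(P\,\T Q)_{s,t}=\sum_{j=1}^{p+q}P_{s,j}Q_{t,j}.
\]
I would then record the commutation relations coming from the explicit formulas for $\omega$: the entries of $P$ commute among themselves, the entries of $Q$ commute among themselves, and $[P_{s,i},Q_{t,j}]=\delta_{s,t}\veps_{i,j}$, where $\veps=-I_{p,q}$ is exactly the diagonal sign matrix of Definition~\ref{defn:Dets}. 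Thus all of the noncommutativity is concentrated in this single \emph{contraction} rule.

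Next I would unfold the symmetrized determinant. Writing $R=P\,\T Q$ and substituting $\pi=\sigma^{-1}\tau$ in Definition~\ref{defn:Dets}, the outer sum collapses to a sum over ordered tuples of distinct rows:
\[
\sum_{S\in\comb{m}{d}}\Det(R_{S,S})
=\frac{1}{d!}\sum_{\substack{(i_1,\ldots,i_d)\\ \text{distinct}}}\;
\sum_{\pi\in\Symm_d}\sgn(\pi)\,\prod_{a=1}^{d}R_{i_a,i_{\pi(a)}}.
\]
I would then expand each $R_{i_a,i_{\pi(a)}}=\sum_{j_a}P_{i_a,j_a}Q_{i_{\pi(a)},j_a}$ and normal-order the resulting alternating product $PQPQ\cdots$ into the target form (all $P$'s)$\cdot$(all $Q$'s) by moving the $Q$-factors to the right. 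Each elementary move is either a commuting swap or a contraction $[Q_{i_{\pi(a)},j_a},P_{i_b,j_b}]=-\delta_{i_b,i_{\pi(a)}}\veps_{j_b,j_a}$; because the $i$'s are distinct a contraction can fire only along the cycles of $\pi$ (it forces $b=\pi(a)$), and because $\veps$ is diagonal it forces the two column indices to coincide and contributes the scalar $-\veps_{j,j}=\pm1$.

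I would organize the expansion by the number $d-l$ of contractions performed. For a term with $l$ surviving positions, the uncontracted $P$- and $Q$-factors, after the alternating sum over the part of $\pi$ permuting the surviving rows together with the free sum over their column indices, reassemble into honest minors: the surviving $P$- and $Q$-rows coincide (this is what the cycle structure guarantees), producing exactly $\sum_{S\in\comb{m}{l}}\det P_{S,J}\det Q_{S,J}$ for a column set $J\in\comb{p+q}{l}$. The $d-l$ contracted row indices are then summed freely over the $m-l$ rows outside $S$, giving the falling factorial $\tfrac{(m-l)!}{(m-d)!}$, which together with the prefactor $1/d!$ accounts for the coefficient $\frac{(m-l)!}{d!(m-d)!}$ in the statement. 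This matches the bookkeeping one sees directly in the cases $d=1,2$, where the free contracted row contributes the factor $m-l$ and the surviving-column weight $\veps_{j,j}$ reproduces $c^d_{\{j\}}$.

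The main obstacle is the final piece: collecting the scalar contribution of the contractions, summed with the signs coming from the cycle structure of $\pi$ and the $\veps$-weights, into precisely the integer $c^d_J=\sum_k\binom{\alpha+\beta}{k}(\beta-k)^d(-1)^k$ of \eqref{eq:def-of-c^d_J}, and in particular showing that this scalar depends on $J$ only through the pair $(\alpha,\beta)$, with the two blocks of $\veps$ (namely $-1$ on the $p$-block and $+1$ on the $q$-block) producing this dependence. I would isolate this as a self-contained lemma about an alternating permutation sum weighted by $\veps$-contractions, and prove the finite-difference formula by induction on $d$ (peeling off one contraction to obtain the recursion) or by a generating-function argument, anchored by the base cases $d\le 2$ computed above. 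An alternative and possibly cleaner route to the same coefficient is structural: write $P\,\T Q=X\,\T\partial^X+\partial^Y\,\T Y$ as a sum of two entrywise-commuting $\frakgl_m$-matrices, expand $\Det$ of the sum by a symmetrized Laplace expansion, and apply the classical Capelli identity to each block-minor; this exhibits $c^d_J$ as assembled from the two blocks and would be my fallback if the direct cycle-sum becomes unwieldy.
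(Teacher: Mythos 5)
Your reduction via Equation~\eqref{eq:caseC-image-of-EX+tEY} to computing $\sum_{S}\Det\bigl((P\,\T Q)_{S,S}\bigr)$, and your organization of the normal-ordering by the number of contractions, are sound; in fact they are a hand-worked version of what the paper does with exterior calculus. The paper computes $\Xi^d$ for $\Xi=\sum_{s,t}e_s e^\ast_t (P\T Q)_{(s,t)}$ in two ways and then applies the contraction $e_S e^\ast_T\mapsto\delta_{S,T}$, so the fermionic variables automate exactly the signs and the diagonal restriction that you track by hand. Your structural claims do check out: since a contraction can fire only along $a\mapsto\pi(a)$, bijectivity of $\pi$ forces the surviving $P$-rows and $Q$-rows to coincide as sets, and the free sum over the $d-l$ contracted rows gives the ordered count $(m-l)!/(m-d)!$, matching the paper's factor $(d-l)!\binom{m-l}{d-l}$ arising from $\tau^{d-l}$.

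The genuine gap is precisely the step you flag as the ``main obstacle'': identifying the alternating contraction scalar with $c^d_J$ and showing it depends on $J$ only through $(\alpha,\beta)$. This is not a routine verification to be deferred --- it is the actual content of the proposition, and the paper devotes to it Definition~\ref{defn:b-d-uv}, Lemmas~\ref{lem:gamma-recurrence} and \ref{lem:gamma-to-det}, and the entirety of Appendix~\ref{sec:c^d_J}. Concretely, ``peeling off one contraction'' does not yield a recursion in $d$ alone: one must track a two-parameter family of coefficients (the paper's $b^d_{u,v}$, with the three-term recurrence $b^{d+1}_{u,v}=b^{d}_{u-1,v}+(u+1)b^{d}_{u+1,v-1}+(v+1)b^{d}_{u-1,v+1}$ of Lemma~\ref{lem:(12.2)}, reflecting that each new factor of $\Xi$ either creates a fresh $\alpha\beta$-pair or flips a tilde on an existing one), and then prove the resummation $c^d_{\alpha,\beta}=\sum_{u+v=\alpha+\beta}b^d_{u,v}\,\veps(\alpha,\beta;u,v)$, which the paper achieves only via a generating-function argument: applying $\theta=T\,\partial/\partial T$ $d$ times to $F^d_{\alpha,\beta}(T)$ and evaluating at $T=1$, where the product collapses, non-obviously, to $(T-1)^{\alpha+\beta}T^{-\alpha}$ (Lemmas~\ref{lem:(12.12)=(19.1)}--\ref{lem:(19.4)}). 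Base cases $d\le 2$ plus an induction on $d$ alone will not close this, because the recursion mixes different $(u,v)$ strata. Your fallback (symmetrized Laplace expansion of $\Det(X\T\partial^X+\partial^Y\T Y)$ plus classical Capelli per block) is likewise only a sketch: it amounts to the $\CC$-linear translation between the paper's identities (1) and (2), which the paper explicitly notes should exist but never carries out, and it would deposit you back at the same unproved coefficient identity.
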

The proof is given in \S~\ref{subsec:C-2-pf}.


We next give the formula corresponding to
$\PD(V)^{K\times H} \leftarrow U(\frakm)^H$.
\begin{proposition}
\label{prop:caseC-not-easy-right-half}
For $d \ge 1$, 
define $C_d \in U(\frakm)^H $ by
\begin{align*}
\textstyle
  C_d = 
  \sum\limits_{l=0}^d \dfrac{(m-l)!}{d!(m-d)!}
  \sum\limits_{J\in\comb{p+q}{l}} c^d_J
  \Det_{p,q}( (\BB-\frac{m}{2}I_{p,q})_{J,J}; l-1, l-2, \ldots, 0),
\end{align*}
where $c^d_J$ is defined 
in Equation \eqref{eq:def-of-c^d_J} 
and $\BB \in \Mat(p+q;U(\frakm))$ in Equation~\eqref{eq:caseC-def-of-B}.
Then its image under the Weil representation $\omega$
is given by
  \begin{align*}
  \textstyle
    \omega(C_d) = 
      \sum\limits_{l=0}^d \dfrac{(m-l)!}{d!(m-d)!}
      \sum_{J\in\comb{p+q}{l}} c^d_J
      \sum_{S\in\comb{m}{l}} \det P_{S,J} \det Q_{S,J}.
  \end{align*}
\end{proposition}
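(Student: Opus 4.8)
The plan is to deduce this proposition from Proposition~\ref{prop:caseC-easy-right-half} together with the fact that $\omega\colon U(\frakm)\to\PD(V)$ is an algebra homomorphism. Since the scalar coefficients $\frac{(m-l)!}{d!(m-d)!}$ and $c^d_J$ occurring in $C_d$ appear unchanged on the right-hand side of the asserted formula for $\omega(C_d)$, it will suffice to establish, for each $l$ with $0\le l\le d$ and each $J\in\comb{p+q}{l}$, the term-by-term identity
\[
\omega\bigl(\Det_{p,q}((\BB-\tfrac{m}{2}I_{p,q})_{J,J};\,l-1,l-2,\ldots,0)\bigr)
=\sum\nolimits_{S\in\comb{m}{l}}\det P_{S,J}\det Q_{S,J}.
\]
Everything then follows by multiplying by the prescribed coefficients and summing over $l$ and $J$.

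To prove this identity I would first push $\omega$ through the symmetrized minor. By Definition~\ref{defn:Dets}, $\Det_{p,q}(M_{J,J};u)$ is a fixed $\CC$-linear combination of ordered products of the shifted entries $M_{i,j}-u_k\veps_{i,j}$, in which the $u_k\veps_{i,j}$ are constants. Because $\omega$ preserves sums, scalar multiples, and the order of products, it commutes with this construction, so that $\omega(\Det_{p,q}((\BB-\tfrac{m}{2}I_{p,q})_{J,J};u))=\Det_{p,q}((\omega(\BB)-\tfrac{m}{2}I_{p,q})_{J,J};u)$, the scalar matrix $\tfrac{m}{2}I_{p,q}$ passing through $\omega$ untouched. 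Next I would substitute the matrix form \eqref{eq:caseC-image-of-B} of the Weil representation: the shift $\tfrac{m}{2}I_{p,q}$ built into the definition of $C_d$ is exactly the scalar term carried by $\omega(\BB)$, so that $\omega(\BB)-\tfrac{m}{2}I_{p,q}=\T P Q$. Finally, applying Proposition~\ref{prop:caseC-easy-right-half} with $I=J$ and $d=l$ (so that its diagonal parameters are precisely $l-1,l-2,\ldots,0$) evaluates $\Det_{p,q}((\T P Q)_{J,J};l-1,\ldots,0)=\sum_{S\in\comb{m}{l}}\det P_{S,J}\det Q_{S,J}$, which is the right-hand side of the displayed identity.

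Because $\omega$ is a homomorphism, there is no genuine analytic or combinatorial difficulty in this argument: the substantive content — evaluating the symmetrized minor of the operator matrix $\T P Q$ — has already been carried out in Proposition~\ref{prop:caseC-easy-right-half}, and the present statement is essentially that result re-expressed as the image of the element $C_d\in U(\frakm)^H$. The one point requiring care, and hence the main thing I would verify, is the precise matching of the data: that the scalar shift $\tfrac{m}{2}I_{p,q}$ cancels the constant term of $\omega(\BB)$ exactly, and that the parameter list $(l-1,l-2,\ldots,0)$ used in defining $C_d$ coincides with the one appearing in Proposition~\ref{prop:caseC-easy-right-half}. Once these line up, the proof collapses to the two-line substitution above.
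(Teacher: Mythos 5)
Your proposal is correct and is essentially identical to the paper's own (one-line) proof: Proposition~\ref{prop:caseC-not-easy-right-half} is exactly the $\CC$-linear combination, with coefficients $\frac{(m-l)!}{d!(m-d)!}\,c^d_J$, of the identities of Proposition~\ref{prop:caseC-easy-right-half} taken with $I=J$ and $d$ replaced by $l$, pushed through the algebra homomorphism $\omega$ as you describe. The one point you single out for verification is indeed the delicate one, and you resolve it correctly: the explicit formulas for $\omega$ on $\frakm$ give $\omega(\BB)=\T P Q+\frac{m}{2}I_{p,q}$ (the ``$\frac{n}{2}$'' printed in Equation~\eqref{eq:caseC-image-of-B} and in Proposition~\ref{prop:caseC-easy-right-half} is a typo for $\frac{m}{2}$, as one checks from $[\partial^Y_{s,i},y_{s,j}]=\delta_{i,j}$ summed over the $m$ values of $s$), so the shift $\frac{m}{2}I_{p,q}$ built into $C_d$ cancels the constant term exactly as you claim.
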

\begin{proof}
The formula is a linear combination over the elements in Proposition~\ref{prop:caseC-easy-right-half}.
\end{proof}

Thus we get the following theorem.
\begin{theorem}[Capelli identity for Case {$\CC$} (2)]
  \label{thm:caseC-not-easy}
  For $d \ge 1$, 
  we define $X_d \in U(\frakg)^K =
  U(\frakgl_m(\CC) \oplus\frakgl_m(\CC))^{GL_m(\CC)}$
  and $C_d \in U(\frakm)^H = 
  U(\frakgl_{p+q}(\CC))^{GL_p(\CC)\times GL_q(\CC)}$
  as in Propositions~\ref{prop:caseC-not-easy-left-half}
  and \ref{prop:caseC-not-easy-right-half}, respectively.
  Then we have
  \begin{align*}
    \omega(X_d) = 
    \sum_{l=0}^d \dfrac{(m-l)!}{d!(m-d)!}
    \sum_{J\in\comb{p+q}{l}} c^d_J
    \sum_{S\in\comb{m}{l}} \det P_{S,J} \det Q_{S,J}
    = 
    \omega(C_d).
  \end{align*}
\end{theorem}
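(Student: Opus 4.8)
The plan is to obtain the theorem as an immediate consequence of the two preceding propositions, which compute the two outer images under $\omega$ separately and identify each with one and the same explicit differential operator in $\PD(V)$. First I would invoke Proposition~\ref{prop:caseC-not-easy-left-half} for the element $X_d = \sum_{S\in\comb{m}{d}} \Det((\EE^X+\T\EE^Y-\frac{p-q}{2}1_m)_{S,S}) \in U(\frakg)^K$; that proposition gives
\[
\omega(X_d) = \sum_{l=0}^d \frac{(m-l)!}{d!(m-d)!} \sum_{J\in\comb{p+q}{l}} c^d_J \sum_{S\in\comb{m}{l}} \det P_{S,J} \det Q_{S,J},
\]
which is precisely the middle member of the asserted triple equality.

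Next I would apply Proposition~\ref{prop:caseC-not-easy-right-half} to the element $C_d \in U(\frakm)^H$ defined there; its conclusion states that $\omega(C_d)$ equals the very same combination
\[
\sum_{l=0}^d \frac{(m-l)!}{d!(m-d)!} \sum_{J\in\comb{p+q}{l}} c^d_J \sum_{S\in\comb{m}{l}} \det P_{S,J} \det Q_{S,J}.
\]
Since both $\omega(X_d)$ and $\omega(C_d)$ are thereby identified with this single operator, the chain $\omega(X_d) = (\text{middle}) = \omega(C_d)$ follows at once, which is exactly the Capelli identity of the theorem.

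Because the genuine content has been isolated in the two propositions, no fresh computation is needed at the level of the theorem itself. The only thing to verify is that the normalizing constants $(m-l)!/(d!(m-d)!)$, the integer coefficients $c^d_J$ (with $\alpha,\beta$ as in \eqref{eq:def-of-c^d_J-alpha-beta}), and the summation ranges over $l$, $J$, and $S$ agree verbatim between the two propositions, which they do by construction. The main obstacle lies entirely upstream: Proposition~\ref{prop:caseC-not-easy-left-half} requires expanding the symmetrized determinant of the noncommutative matrix $\omega(\EE^X+\T\EE^Y-\frac{p-q}{2}1_m) = P\,\T Q$ (using \eqref{eq:caseC-image-of-EX+tEY}) and regrouping the result into the displayed $c^d_J$-weighted form, while Proposition~\ref{prop:caseC-not-easy-right-half} follows by taking the appropriate $\CC$-linear combination of the building-block identities of Proposition~\ref{prop:caseC-easy-right-half}. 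Granting these two results, the theorem is a formality.
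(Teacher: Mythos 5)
Your proposal is correct and coincides with the paper's own treatment: the paper explicitly states that Theorem~\ref{thm:caseC-not-easy} is a direct consequence of Propositions~\ref{prop:caseC-not-easy-left-half} and \ref{prop:caseC-not-easy-right-half}, each identifying $\omega(X_d)$ and $\omega(C_d)$ with the same $c^d_J$-weighted operator in $\PD(V)$. You have also correctly located where the genuine work lies (the $\Xi^d$ expansion behind Proposition~\ref{prop:caseC-not-easy-left-half} and the linear combination of Proposition~\ref{prop:caseC-easy-right-half} behind the other), so nothing is missing.
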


\subsection{Proof of the Capelli identity for Case $\CC$ {\upshape(1)}}
\label{subsec:C-1-pf}

Here we give the proof of Propositions~\ref{prop:caseC-easy-left-half}
and \ref{prop:caseC-easy-right-half} in \S~\ref{subsec:C-1}.

\subsubsection{Proof of Proposition~\ref{prop:caseC-easy-left-half}}
\label{subsubsec:C-1-pf-left-half}

Let $e_s$ and $e^\ast_s$ be the elements in standard basis of 
$\CC^m$ and its dual $(\CC^m)^\ast$, respectively.
We consider the exterior algebra 
$\bigwedge (\CC^m \oplus (\CC^m)^\ast)$
and the ring $\PD(V)$ of differential operators
with polynomial coefficients on $V$.
We define several elements in the algebra
$\bigwedge (\CC^m \oplus (\CC^m)^\ast) \otimes_\CC \PD(V)$:
\begin{gather*}
  \textstyle
  \alpha_j = \sum\limits_{s=1}^m e_s P_{s,j},
  \quad
  \beta_j = \sum\limits_{s=1}^m e^\ast_s Q_{s,j}
  \quad (1 \le j \le p+q) ; 
  \quad
  \tau = \sum\limits_{s=1}^m e_s e^\ast_s,
  \\
  \textstyle
  \Xi_X =
  \sum\limits_{j=1}^p \alpha_j \beta_j =
  \sum\limits_{s,t=1}^m e_s e^\ast_t (X {\cdot} \T\partial^X)_{(s,t)},
  \\
  \textstyle
  \Xi_Y =
  \sum\limits_{j=p+1}^{p+q} \alpha_j \beta_j =
  \sum\limits_{s,t=1}^m e_s e^\ast_t (\partial^Y \! {\cdot} \T{Y})_{(s,t)}.
\end{gather*}
Here $(X {\cdot} \T\partial_X)_{(s,t)}$ denotes the $(s,t)$-entry of 
the matrix $X {\cdot} \T\partial_X$, 
and we omit $ \wedge $ for the multiplication in the exterior algebra.  
For an index set $J$,
we put $\alpha_J = \alpha_{j_1} \cdots \alpha_{j_d}$.
We also define $\beta_J$, $e_S$ or $e^\ast_T$ in the same way.
Then $\alpha_J$ and $\beta_J$ are
written in terms of column-determinants as follows:
\begin{equation}
  \label{eq:alpha-beta-causes-det}
  \alpha_J = \textstyle\sum_{S\in\comb{m}{d}} e_S \det P_{S,J},
  \qquad
  \beta_J = \textstyle\sum_{T\in\comb{m}{d}} e^\ast_T \det Q_{T,J}.
\end{equation}
Recall the matrix $\veps = - I_{p,q} \in \Mat(p+q ; \CC)$.
We have the following relations for the elements defined above.

\begin{lemma}
\begin{thmenumerate}
\item
The element $\tau$ is central in the algebra
$\bigwedge (\CC^m \oplus (\CC^m)^\ast) \otimes_{\CC} \PD(V)$. 
\item
We have the following commutation relations:
\begin{align}
\label{eq:comm-rel-of-alpha-alpha}
& \alpha_i \alpha_j + \alpha_j \alpha_i  = 0,
\qquad \beta_i \beta_j + \beta_j \beta_i  = 0
\quad (1 \le i,j \le p+q),
\\
\label{eq:comm-rel-of-P-Q}
& [P_{s,i}, Q_{t,j}] = \veps_{i,j} \delta_{s,t}
\quad (1 \le i,j \le p+q,\; 1 \le s,t \le m),
\\
\label{eq:comm-rel-of-alpha-beta}
& \alpha_i \beta_j + \beta_j \alpha_i = \veps_{i,j}\tau
\quad (1 \le i,j \le p+q),
\\ 
\label{eq:comm-rel-of-XiX-XiY}
& [\Xi_X, \Xi_Y] = 0,
\\
\label{eq:comm-rel-of-alpha-XiX}
& \alpha_j \Xi_X = (\Xi_X+\tau) \alpha_j \quad (1\le j \le p),
\\
\label{eq:comm-rel-of-alpha-XiY}
& \alpha_j \Xi_Y = (\Xi_Y-\tau) \alpha_j \quad (p+1 \le j \le p+q).
\end{align}
\end{thmenumerate}
\end{lemma}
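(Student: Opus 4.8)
The plan is to exploit the \emph{superalgebra} structure of $\bigwedge(\CC^m\oplus(\CC^m)^\ast)\otimes_\CC\PD(V)$: the generators $e_s,e^\ast_s$ are odd, the differential operators in $\PD(V)$ are even and commute with the exterior factor, and the exterior algebra is supercommutative. The one structural fact I will use repeatedly is that an \emph{even} element of the exterior factor is central (since $xy=(-1)^{|x||y|}yx$ forces $xy=yx$ whenever $|x|=0$). With this in hand, every relation reduces to elementary sign bookkeeping together with the canonical commutators of the Weyl algebra, so I will treat the relations one at a time.

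For part $(1)$, $\tau=\sum_s e_s e^\ast_s$ lies in the degree-two, hence even, part of the exterior factor and carries no $\PD(V)$-content. Being even it commutes with all of $\bigwedge(\CC^m\oplus(\CC^m)^\ast)$ by supercommutativity, and having no operator content it commutes with $\PD(V)$; as these generate the algebra, $\tau$ is central. (Concretely, commutation with an odd generator $e_t$ is the one-line computation of pushing $e_t$ through each $e_s e^\ast_s$, the two anticommutation sign changes cancelling.)

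For part $(2)$ I proceed relation by relation. The anticommutations \eqref{eq:comm-rel-of-alpha-alpha} hold because the entries of $P$ (all of $x$-type or $\partial^Y$-type) commute among themselves, as do those of $Q$; thus in the sum $\alpha_i\alpha_j+\alpha_j\alpha_i$ the scalar coefficients $P_{s,i}P_{t,j}+P_{s,j}P_{t,i}$ are symmetric under $s\leftrightarrow t$ while $e_s e_t$ is antisymmetric, so relabelling forces the sum to vanish. Relation \eqref{eq:comm-rel-of-P-Q} is a case check on whether the column indices $i,j$ are $\le p$ or $>p$: the two diagonal cases give the canonical commutators of $x_{s,i},\partial^X_{t,j}$ and of $\partial^Y,Y$, producing $\mp\delta_{i,j}\delta_{s,t}$ in exact agreement with $\veps_{i,j}=-(I_{p,q})_{i,j}$, while the mixed cases vanish since they involve disjoint variable sets. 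Relation \eqref{eq:comm-rel-of-alpha-beta} is then immediate: expanding $\alpha_i\beta_j+\beta_j\alpha_i$ and using $e^\ast_t e_s=-e_s e^\ast_t$ turns the anticommutator into $\sum_{s,t}e_s e^\ast_t[P_{s,i},Q_{t,j}]$, and substituting \eqref{eq:comm-rel-of-P-Q} collapses it to $\veps_{i,j}\tau$. For \eqref{eq:comm-rel-of-XiX-XiY} I write $\Xi_X=\sum_{s,t}e_s e^\ast_t A_{s,t}$ and $\Xi_Y=\sum_{s,t}e_s e^\ast_t B_{s,t}$, where $A$ involves only $x$-operators and $B$ only $y$-operators so that $[A_{s,t},B_{u,v}]=0$; since the even factors $e_s e^\ast_t$ are central they pass through freely and the commutator vanishes.

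The twisted relations \eqref{eq:comm-rel-of-alpha-XiX}--\eqref{eq:comm-rel-of-alpha-XiY} are the real point. Writing $\Xi_X=\sum_{k=1}^p\alpha_k\beta_k$, I move $\alpha_j$ rightwards through each summand, using \eqref{eq:comm-rel-of-alpha-alpha} to anticommute past $\alpha_k$ and \eqref{eq:comm-rel-of-alpha-beta} to move past $\beta_k$; the two sign flips recombine to $\alpha_k\beta_k\alpha_j$ together with a correction $-\veps_{j,k}\tau\alpha_k$, and summing over $k$ the correction collapses to $\pm\tau\alpha_j$ because $\sum_k\veps_{j,k}\alpha_k=\veps_{j,j}\alpha_j=\mp\alpha_j$ on the respective blocks, with $\tau$ central throughout. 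I expect the tracking of these signs --- in particular keeping straight that $\veps$ contributes $-1$ on the $x$-block ($j\le p$) and $+1$ on the $y$-block ($j>p$) --- to be the only delicate point; everything else is forced by supercommutativity and the Weyl-algebra relations.
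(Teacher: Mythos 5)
Your proof is correct and follows essentially the same route as the paper: direct expansion using the Weyl-algebra commutators $[P_{s,i},Q_{t,j}]=\veps_{i,j}\delta_{s,t}$ and the anticommutation relations, with the same computation (and the same sign bookkeeping via $\veps_{j,j}$) for moving $\alpha_j$ through $\Xi_X$ and $\Xi_Y$. The only, harmless, deviation is your argument for $[\Xi_X,\Xi_Y]=0$, where you invoke the commutativity of the entries of $X\T\partial^X$ with those of $\partial^Y\T Y$ together with the centrality of the even elements $e_s e^\ast_t$, whereas the paper deduces it from the anticommutativity of the constituents $\alpha_j,\beta_j$ ($1\le j\le p$) with $\alpha_k,\beta_k$ ($p+1\le k\le p+q$); both observations are immediate and equally valid.
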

\begin{proof}
(1)
$2m$ elements $e_s$ ($1 \le s \le m$) 
and $e^\ast_s$ ($1 \le s \le m$) are anti-commutative,
and $\tau$ is a sum of $e_s e^\ast_s$,
which is commutative with $e_t$ and $e^\ast_t$.
Therefore $\tau$ is central in
$\bigwedge (\CC^m \oplus (\CC^m)^\ast) \otimes_{\CC} \PD(V)$.

(2)-Equation~(\ref{eq:comm-rel-of-alpha-alpha}):
The entries of $P = (X, \partial^Y)$ commute with each other.
Hence $\alpha_i$ and $\alpha_j$ are anti-commutative.
Similarly $\beta_i$ and $\beta_j$ are also anti-commutative.

(2)-Equation~(\ref{eq:comm-rel-of-P-Q}):
$[P_{s,i}, Q_{t,j}]$ is nonzero only if $s=t$ and $i=j$.
If $1 \le i \le p$, then 
$[P_{s,i}, Q_{s,i}] = [x_{s,i}, \partial^X_{s,i}] = -1$.
If $p+1 \le i \le p+q$, then 
$[P_{s,i}, Q_{s,i}] = [\partial^Y_{s,i-p}, y_{s,i-p}] = 1$.
Hence we have 
$[P_{s,i}, Q_{t,j}] = \veps_{i,j} \delta_{s,t}$.

(2)-Equation~(\ref{eq:comm-rel-of-alpha-beta}):
We use Equation~(\ref{eq:comm-rel-of-P-Q}), and we get
\begin{align*}
\textstyle
\alpha_i \beta_j + \beta_j \alpha_i 
=
\sum_{s,t=1}^m e_s e^\ast_t [P_{s,i}, Q_{t,j}]
=
\sum_{s,t=1}^m e_s e^\ast_t \veps_{i,j} \delta_{s,t}
= \veps_{i,j} \tau.
\end{align*}

(2)-Equation~(\ref{eq:comm-rel-of-XiX-XiY}):
By Equations~(\ref{eq:comm-rel-of-alpha-alpha})
and (\ref{eq:comm-rel-of-alpha-beta}),
$\alpha_j$ and $\beta_j$ in
$\Xi_X = \sum_{j=1}^p \alpha_j \beta_j$ 
anti-commute with
$\alpha_j$ and $\beta_j$ in
$\Xi_Y = \sum_{j=p+1}^{p+q} \alpha_j \beta_j$.
Hence $\Xi_X$ commutes with $\Xi_Y$.

(2)-Equation~(\ref{eq:comm-rel-of-alpha-XiX}):
For $1 \le j \le p$, we have
\begin{align}
\nonumber
\alpha_j \Xi_X 
&=
\textstyle
\sum_{i=1}^p \alpha_j \alpha_i \beta_i
=
-\sum_{i=1}^p \alpha_i \alpha_j \beta_i \\
&=
\textstyle
-\sum_{i=1}^p \alpha_i (-\beta_i \alpha_j + \veps_{i,j} \tau)
=
\Xi_X \alpha_j + \alpha_j \tau,
\label{eq:proof:alpha-XiX}
\end{align}
where we used Equations~(\ref{eq:comm-rel-of-alpha-alpha})
and (\ref{eq:comm-rel-of-alpha-beta}).
Equation~(\ref{eq:comm-rel-of-alpha-XiY}) is proved 
similarly to Equation~(\ref{eq:comm-rel-of-alpha-XiX}).
\end{proof}
We have the following relations of 
$\Xi_X$ and $\Xi_Y$ with the symmetrized determinants.
\begin{lemma}
\label{lem:caseC-Xi-to-Det}
For indeterminate $ z $, set $\Xi_X(z) = \Xi_X + z\tau$ 
and similarly $\Xi_Y(z) = \Xi_Y + z\tau$.
Then, for the variables $ u = (u_1, u_2, \dots, u_d) $, we have
\begin{align*}
  \Xi_X(u_1) \Xi_X(u_2) \cdots \Xi_X(u_d)
  &=
  d! (-1)^{\frac{d(d-1)}{2}} \!\! \textstyle\sum\limits_{S,T\in\comb{m}{d}} e_S e^\ast_T
  \Det( (X\T\partial^X)_{S,T}; u),
  \\
  \Xi_Y(u_1) \Xi_Y(u_2) \cdots \Xi_Y(u_d)
  &=
  d! (-1)^{\frac{d(d-1)}{2}} \!\! \textstyle\sum\limits_{S,T\in\comb{m}{d}} e_S e^\ast_T
  \Det( (\partial^Y \T Y)_{S,T}; u ).
\end{align*}
\end{lemma}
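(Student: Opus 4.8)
The plan is to prove the identity for $\Xi_X$ in detail; the companion formula for $\Xi_Y$ then follows verbatim, with the matrix $X\,\T\partial^X$ replaced by $\partial^Y\,\T Y$, since $\Xi_Y = \sum_{s,t} e_s e^\ast_t (\partial^Y\,\T Y)_{(s,t)}$ has exactly the same shape. Writing $M = X\,\T\partial^X$ and recalling $\tau = \sum_s e_s e^\ast_s$, I would first record the single-factor expression
\[
\Xi_X(u_k) = \sum_{s,t=1}^m e_s e^\ast_t\,(M_{s,t} + u_k\delta_{s,t}),
\]
noting that the exterior generators $e_s, e^\ast_t$ commute with every coefficient in $\PD(V)$ (they sit in different tensor factors) and that the factors $\Xi_X(u_k)$ commute with one another by centrality of $\tau$, so the product is unambiguous. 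Expanding the ordered product $\Xi_X(u_1)\cdots\Xi_X(u_d)$ and pulling all exterior monomials to the left then gives
\[
\sum_{\substack{s_1,\dots,s_d\\ t_1,\dots,t_d}}
\Bigl(\prod_{k=1}^d e_{s_k} e^\ast_{t_k}\Bigr)\,
\prod_{k=1}^d\bigl(M_{s_k,t_k} + u_k\delta_{s_k,t_k}\bigr),
\]
where the operator-valued product is kept in the order $k=1,\dots,d$ because the $M_{s,t}$ do not commute.

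The combinatorial heart is the exterior part. Since $e_s^2 = (e^\ast_t)^2 = 0$, a term survives only when $s_1,\dots,s_d$ are pairwise distinct and $t_1,\dots,t_d$ are pairwise distinct; such a term determines $S,T\in\comb{m}{d}$ together with permutations $\sigma,\tau\in\Symm_d$ through $s_k = S(\sigma(k))$ and $t_k = T(\tau(k))$. As all the generators $e_s, e^\ast_t$ anticommute, I would move each $e^\ast_{t_j}$ rightward past every $e_{s_k}$ with $k>j$, a total of $\binom{d}{2}$ transpositions, producing the global sign $(-1)^{d(d-1)/2}$, after which
\[
\prod_{k=1}^d e_{s_k} e^\ast_{t_k}
= (-1)^{d(d-1)/2}\,(e_{s_1}\cdots e_{s_d})(e^\ast_{t_1}\cdots e^\ast_{t_d})
= (-1)^{d(d-1)/2}\sgn(\sigma)\sgn(\tau)\,e_S e^\ast_T,
\]
the last equality being the reordering of each block into increasing order.

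Substituting this back and summing over all admissible tuples — equivalently over $S,T\in\comb{m}{d}$ and then over $\sigma,\tau\in\Symm_d$ — collects the coefficient of $e_S e^\ast_T$ as
\[
(-1)^{\frac{d(d-1)}{2}}\sum_{\sigma,\tau\in\Symm_d}\sgn(\sigma)\sgn(\tau)
\prod_{k=1}^d\bigl((M_{S,T})_{\sigma(k),\tau(k)} + u_k\,\delta_{S(\sigma(k)),T(\tau(k))}\bigr),
\]
which is $d!\,(-1)^{d(d-1)/2}\Det(M_{S,T};u)$ by Definition~\ref{defn:Dets}, yielding the asserted formula. I expect the main obstacle to be \emph{not} the sign bookkeeping (which is the standard $\binom{d}{2}$ count above) but the matching of the diagonal-parameter convention: the shift produced here is weighted by the \emph{ambient} Kronecker delta $\delta_{S(\sigma(k)),T(\tau(k))}$, i.e. the $(S,T)$-submatrix of the $m\times m$ identity, rather than by the abstract $(k,k)$ positions of the $d\times d$ block. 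For the diagonal blocks $S=T$ the two agree since $S$ is injective, but for $S\neq T$ one must read the diagonal parameters in $\Det((X\,\T\partial^X)_{S,T};u)$ as attached to the global diagonal — exactly the convention already in force for $\Det_{p,q}$ through $\veps_{i,j}$. Keeping the operator product ordered in $k$ and tracking this delta convention are the only delicate points; everything else is the sign computation established above.
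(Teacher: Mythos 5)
Your proof is correct and takes essentially the same route as the paper's: both expand the ordered product $\Xi(u_1)\cdots\Xi(u_d)$, use $e_s^2=(e^\ast_t)^2=0$ to restrict to tuples of distinct row and column indices, extract the global sign $(-1)^{d(d-1)/2}$ from the $\binom{d}{2}$ transpositions needed to separate the $e$'s from the $e^\ast$'s together with $\sgn(\sigma)\sgn(\tau)$ from sorting each block, and identify the coefficient of $e_S e^\ast_T$ with $d!\,\Det(A_{S,T};u)$, treating $\Xi_X$ and $\Xi_Y$ uniformly through a single matrix $A$. Your closing remark that the diagonal parameters are weighted by the ambient Kronecker delta $\delta_{S(\sigma(k)),T(\tau(k))}$ (i.e.\ by $1_{S,T}$) rather than by the block positions is an accurate reading of the convention the paper uses implicitly in the middle line of its own computation.
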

\begin{proof}
Let $\Xi = \sum_{s,t=1}^m e_s e^\ast_t A_{s,t}$
be
$\Xi_X = \sum_{s,t=1}^m e_s e^\ast_t (X \T\partial^X)_{s,t}$ or
$\Xi_Y = \sum_{s,t=1}^m e_s e^\ast_t (\partial^Y \T Y)_{s,t}$.
Then we have
\begin{align*}
\textstyle
\Xi(u) = \Xi + u\tau
=
\sum\limits_{s,t=1}^m e_s e^\ast_t A_{s,t} + u \sum\limits_{s=1}^m e_s e^\ast_s
=
\sum\limits_{s,t=1}^m e_s e^\ast_t (A_{s,t} + u \delta_{s,t}).
\end{align*}
Hence we have
\begin{align*}
  & \Xi(u_1) \cdot \Xi(u_2) \cdots \Xi(u_d) \\
  &=
  \textstyle
  \sum\limits_{\substack{1 \le s_1,\ldots,s_d \le m \\ 1 \le t_1,\ldots,t_d \le m }}
  e_{s_1} e^\ast_{t_1} \cdots e_{s_d} e^\ast_{t_d}
  (A_{s_1,t_1} + u \delta_{s_1,t_1}) \cdots
  (A_{s_d,t_d} + u \delta_{s_d,t_d})
  \\ &=
  \textstyle
  \sum_{S,T\in\comb{m}{d}} \sum_{\sigma,\tau\in\Symm_d}
  \sgn(\sigma) \sgn(\tau) (-1)^{d(d-1)/2} 
  e_S e^\ast_T
  \\[-3pt] & \hspace{10ex}
  \times
  (A_{s_{\sigma(1)},t_{\tau(1)}} +
    u_1 \delta_{s_{\sigma(1)},t_{\tau(1)}} )
  \cdots
  (A_{s_{\sigma(d)},t_{\tau(d)}} +
    u_d \delta_{s_{\sigma(d)},t_{\tau(d)}} )
  \\[3pt] &=  
  \textstyle
  d! (-1)^{d(d-1)/2} \sum_{S,T\in\comb{m}{d}}
  e_S e^\ast_T \Det(A_{S,T}; u_1, u_2, \ldots, u_d).
\end{align*}
\end{proof}

\begin{proposition} 
\label{prop:caseC-easy-left-half-Det}
For $S,T \in \comb{m}{d}$, we have
\begin{align*}
  \lefteqn{
\textstyle
    \sum_{J\in\comb{p+q}{d}} \det P_{S,J} \det Q_{T,J}}
  \\
  &=
  \textstyle
  \sum\limits_{l=0}^d 
  \sum\limits_{\substack{
      S'\!,T' \\
      S\dblprime\!,T\dblprime}}
  (-1)^{\scriptscriptstyle l(S'\!,S\dblprime) + l(T'\!,T\dblprime)}
  \Det( (X \T \partial_X)_{S'\!,T'}; l-1, l-2, \ldots, 0)
  \\[-5pt]
  &\hspace*{.3\textwidth} \times
  \Det( (\partial_Y \T Y)_{S\dblprime\!,T\dblprime}; -(d-l-1),\ldots, -1, 0).
\end{align*}
where the second summation is taken over 
$      S',T' \in \comb{m}{l}, \;  S\dblprime,T\dblprime \in \comb{m}{d-l} $ such that 
$ S' \amalg S\dblprime = S $ and $ T' \amalg T\dblprime = T $.
\end{proposition}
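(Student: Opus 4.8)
The plan is to read off both sides of the identity as the coefficient of a fixed exterior monomial $e_S e^\ast_T$ inside the single element $\Sigma = \sum_{J\in\comb{p+q}{d}}\alpha_J\beta_J$ of $\bigwedge(\CC^m\oplus(\CC^m)^\ast)\otimes_\CC\PD(V)$. Using $\alpha_J = \sum_S e_S\det P_{S,J}$ and $\beta_J = \sum_T e^\ast_T\det Q_{T,J}$ from \eqref{eq:alpha-beta-causes-det}, and the fact that the coefficients $\det P_{S,J}\in\PD(V)$ commute with the exterior generators, one gets $\Sigma = \sum_{S,T\in\comb{m}{d}} e_S e^\ast_T \sum_{J}\det P_{S,J}\det Q_{T,J}$. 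Hence the left-hand side of the Proposition is exactly the $e_S e^\ast_T$-coefficient of $\Sigma$, and the task becomes to compute $\Sigma$ in terms of $\Xi_X$ and $\Xi_Y$ and then extract that coefficient.

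First I would split each $J$ according to the two blocks. Writing $K = J\cap\{1,\dots,p\}$ and $L = J\cap\{p+1,\dots,p+q\}$ with $|K|=l$ and $|L|=d-l$, the vanishing $\veps_{i,j}=0$ for $i\le p<j$ forces $\alpha_L$ and $\beta_K$ to anticommute, so $\alpha_J\beta_J = \alpha_K\alpha_L\beta_K\beta_L = (-1)^{l(d-l)}(\alpha_K\beta_K)(\alpha_L\beta_L)$. Summing gives $\Sigma = \sum_{l=0}^d (-1)^{l(d-l)} F^X_l\, F^Y_{d-l}$, where $F^X_l = \sum_{K\subseteq\{1,\dots,p\},\,|K|=l}\alpha_K\beta_K$ and $F^Y_l = \sum_{L\subseteq\{p+1,\dots,p+q\},\,|L|=l}\alpha_L\beta_L$.

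The heart of the argument is a sub-lemma identifying these elementary sums with ordered $\Xi$-products: $\prod_{r=0}^{l-1}(\Xi_X+r\tau) = l!\,(-1)^{l(l-1)/2}F^X_l$ and $\prod_{r=0}^{l-1}(\Xi_Y-r\tau) = l!\,(-1)^{l(l-1)/2}F^Y_l$ (the factors within each product commute, since $\Xi_X$ commutes with the central $\tau$). I would prove these by induction on $l$, the inductive step being the recursion $(\Xi_X+l\tau)F^X_l = (-1)^l(l+1)F^X_{l+1}$ and $(\Xi_Y-l\tau)F^Y_l = (-1)^l(l+1)F^Y_{l+1}$. To establish it one expands $\Xi_X F^X_l = \sum_{j=1}^p\sum_{|K|=l}\alpha_j\beta_j\alpha_K\beta_K$ and commutes $\beta_j$ to the right through $\alpha_K$ using \eqref{eq:comm-rel-of-alpha-alpha} and \eqref{eq:comm-rel-of-alpha-beta}: the terms with $j\notin K$ anticommute cleanly and assemble, with sign $(-1)^l$, into $F^X_{l+1}$ (each $(l+1)$-set arising $l+1$ times), while for $j\in K$ the fully anticommuted term vanishes because $\alpha_j$ is repeated, and the only survivor is the single contraction $\beta_j\alpha_j = -\alpha_j\beta_j+\veps_{j,j}\tau$, which yields $-l\tau F^X_l$ in the $X$-block (where $\veps_{j,j}=-1$) and $+l\tau F^Y_l$ in the $Y$-block (where $\veps_{j,j}=+1$); this opposite diagonal sign is precisely what flips the shift from $+r\tau$ to $-r\tau$. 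This contraction bookkeeping — tracking which terms die from a repeated $\alpha_j$ and which survive with which sign — is the step I expect to be the main obstacle.

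Finally I would assemble everything. Feeding the sub-lemma into Lemma~\ref{lem:caseC-Xi-to-Det} (with diagonal parameters $u=(l-1,\dots,0)$ for $X$ and $u=(-(d-l-1),\dots,0)$ for $Y$) converts each $F$ into a sum of symmetrized determinants, and the two sign factors $(-1)^{l(l-1)/2}$ cancel since $(-1)^{l(l-1)}=1$; thus $F^X_l = \sum_{S',T'}e_{S'}e^\ast_{T'}\Det((X\T\partial^X)_{S',T'};l-1,\dots,0)$ and likewise $F^Y_{d-l}=\sum_{S\dblprime,T\dblprime}e_{S\dblprime}e^\ast_{T\dblprime}\Det((\partial^Y\T Y)_{S\dblprime,T\dblprime};-(d-l-1),\dots,0)$. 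Substituting into $\Sigma = \sum_l(-1)^{l(d-l)}F^X_l F^Y_{d-l}$ and multiplying out the exterior monomials gives $e_{S'}e^\ast_{T'}e_{S\dblprime}e^\ast_{T\dblprime} = (-1)^{l(d-l)}(-1)^{l(S',S\dblprime)+l(T',T\dblprime)}e_S e^\ast_T$ whenever $S'\amalg S\dblprime=S$ and $T'\amalg T\dblprime=T$ (and $0$ otherwise), the $(-1)^{l(d-l)}$ coming from moving $e_{S\dblprime}$ past $e^\ast_{T'}$. The two $(-1)^{l(d-l)}$ factors cancel, and reading off the coefficient of $e_S e^\ast_T$ leaves exactly $\sum_{l}\sum_{S'\amalg S\dblprime=S,\,T'\amalg T\dblprime=T}(-1)^{l(S',S\dblprime)+l(T',T\dblprime)}\Det_X\Det_Y$, which is the claimed expression.
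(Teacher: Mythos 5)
Your proposal is correct and is essentially the paper's own proof: both compute $\sum_{J\in\comb{p+q}{d}}\alpha_J\beta_J$ in $\bigwedge(\CC^m\oplus(\CC^m)^\ast)\otimes_\CC\PD(V)$ in two ways, split $J$ into the two $\veps$-blocks with the sign $(-1)^{l(d-l)}$, identify the block sums with the commuting products $\Xi_X(l-1)\cdots\Xi_X(0)$ and $\Xi_Y(-(d-l-1))\cdots\Xi_Y(0)$ --- your recursion $(\Xi_X+l\tau)F^X_l=(-1)^l(l+1)F^X_{l+1}$ is just the bottom-up inductive form of the paper's step of extracting $\Xi_X$ from the tuple sum and moving it left via $\alpha_j\Xi_X=(\Xi_X+\tau)\alpha_j$ --- and then finish with Lemma~\ref{lem:caseC-Xi-to-Det} and the same exterior-monomial sign bookkeeping. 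Your contraction analysis (repeated $\alpha_j$ killing the straight-through term, the surviving $\veps_{j,j}\tau$ term flipping sign between the two blocks) checks out, and your sign $(-1)^{l(d-l)}$ for moving $e_{S\dblprime}$ past $e^\ast_{T'}$ is the correct one (the paper prints $(-1)^{d(d-l)}$ at that point, an apparent typo), cancelling the block-splitting sign to leave exactly $(-1)^{l(S'\!,S\dblprime)+l(T'\!,T\dblprime)}$.
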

\begin{proof}
We show the equality by computing 
$\sum_{J\in\comb{p+q}{d}} \alpha_J \beta_J$
in two different ways.
First, we observe
\begin{align*}
\textstyle
\sum_{J\in\comb{p+q}{d}} \alpha_J \beta_J = 
\sum_{J\in\comb{p+q}{d}} \sum_{S,T\in\comb{m}{d}}
e_S e^\ast_T \det P_{S,J} \det Q_{T,J},
\end{align*}
thanks to Equation~(\ref{eq:alpha-beta-causes-det}).
The coefficient of $e_S e^\ast_T$ in this expression is
equal to the left-hand side of the desired formula.

Second, we compute
$\sum_{J\in\comb{p+q}{d}} \alpha_J \beta_J$ as follows:
\begin{align}
\nonumber
\textstyle
\sum_{J\in\comb{p+q}{d}} \alpha_J \beta_J
&=
\nonumber
\textstyle
\sum\limits_{l=0}^d 
\sum_{J\in\comb{p}{l}} \sum_{K\in p+\comb{q}{d-l}}
\alpha_J \alpha_K \beta_J \beta_K
\\ &=
\label{eq:proof:sum-alphaJ-betaJ}
\textstyle
\sum\limits_{l=0}^d 
\sum_{J\in\comb{p}{l}} \sum_{K\in p+\comb{q}{d-l}}
(-1)^{l(d-l)} \alpha_J \beta_J \alpha_K \beta_K,
\end{align}
where
$p + \comb{q}{d-l} =
\{ \{p+k_1, p+k_2, \ldots, p+k_{d-l} \} ;
  K \in \comb{q}{d-l} \}$,
and we used the anti-commutativity of 
factors $\alpha_k$ of $\alpha_K$ and $\beta_j$ of $\beta_J$.
We compute the factor
$\sum_{J\in\comb{p}{l}} \alpha_J \beta_J$
in Equation~(\ref{eq:proof:sum-alphaJ-betaJ}) as follows:
\begin{align}
\nonumber
\textstyle
\sum_{J \in \comb{p}{l}} \alpha_J \beta_J
&= 
\frac{1}{l!} 
\textstyle
\sum\limits_{j_1,\ldots,j_l=1}^p
\alpha_{j_1} \alpha_{j_2} \cdots ( \alpha_{j_l} \beta_{j_l} ) 
\beta_{j_1} \beta_{j_2} \cdots \beta_{j_{l-1}}
\cdot (-1)^{l-1}
\\ &=
\nonumber
\frac{(-1)^{l-1}}{l!}
\textstyle
\sum\limits_{j_1,\ldots,j_{l-1}=1}^p
\alpha_{j_1} \alpha_{j_2} \cdots \alpha_{j_{l-1}} 
\Xi_X 
\beta_{j_1} \beta_{j_2} \cdots \beta_{j_{l-1}}
\\ &=
\label{eq:proof:sum-alphaJ-betaJ-2}
\frac{(-1)^{l-1}}{l!}
\textstyle
\sum\limits_{J' \in \{1,2,\ldots,p\}^{l-1}}
\Xi_X(l-1) \cdot \alpha_{J'} \beta_{J'},
\end{align}
where we used Equation~(\ref{eq:comm-rel-of-alpha-XiX})
in order to move $\Xi_X$ to the left.
By repeating this operation, we have
\begin{align*}
(\ref{eq:proof:sum-alphaJ-betaJ-2}) =
\frac{(-1)^{l(l-1)/2}}{l!}
\Xi_X(l-1) \cdot \Xi_X(l-2) \cdots \Xi_X(0).
\end{align*}
Similarly, for $ r = d - l $, we have
\begin{align*}
\textstyle
  \sum\limits_{K \in p+\comb{q}{r}} \!\!\! \alpha_K \beta_K 
  =
  \dfrac{(-1)^{r(r-1)/2}}{r!}
  \Xi_Y(-r+1) \Xi_Y(-r+2) \cdots \Xi_Y(0),
\end{align*}
where we used Equation~(\ref{eq:comm-rel-of-alpha-XiY}).
Thus Expression~(\ref{eq:proof:sum-alphaJ-betaJ})
is computed as follows:
\begin{align}
\nonumber
(\ref{eq:proof:sum-alphaJ-betaJ}) 
&=
\textstyle
\sum\limits_{l=0}^d 
(-1)^{l(d-l)} \cdot
\dfrac{(-1)^{l(l-1)/2}}{l!}
\Xi_X(l-1) \cdot \Xi_X(l-2) \cdots \Xi_X(0)
\\ & \quad
\nonumber
\times
\frac{(-1)^{(d-l)(d-l-1)/2}}{(d-l)!}
\Xi_Y(-d+l+1) \cdot \Xi_Y(-d+l+2) \cdots \Xi_Y(0)
\\ &=
\nonumber
\textstyle
\sum\limits_{l=0}^d 
(-1)^{l(d-l)} 
\sum\limits_{S'\!,T'\in\comb{m}{l}} \!\!
e_{S'} e^\ast_{T'} 
\Det( (X \T\partial^X)_{S'\!,T'}; l-1, \ldots, 0 )
\\ & \quad
\label{eq:proof:sum-alphaJ-betaJ-3}
\times
\textstyle
\sum\limits_{S\dblprime\!,T\dblprime\in\comb{m}{d-l}} \!\!
e_{S\dblprime} e^\ast_{T\dblprime} 
\Det( (\partial^Y \T Y)_{S\dblprime\!,T\dblprime}; -d+l+1, \ldots, 0 ),
\end{align}
where we used Lemma~\ref{lem:caseC-Xi-to-Det}.
Now we look at the coefficient of $e_S e^\ast_T$
in Equation~(\ref{eq:proof:sum-alphaJ-betaJ-3}).
A summand of (\ref{eq:proof:sum-alphaJ-betaJ-3})
has $e_S e^\ast_T$ only if 
$S' \amalg S\dblprime = S$ and $T' \amalg T\dblprime = T$.
Even in that case,
we have to translate
$e_{S'} e^\ast_{T'} e_{S\dblprime} e^\ast_{T\dblprime}$ to
$e_{S} e^\ast_{T}$
in order to determine the sign of the summand.
First, $(-1)^{d(d-l)}$ occurs by
moving $e_{S\dblprime}$ to the left of $e^\ast_{T'}$.
Second, $(-1)^{\scriptscriptstyle l(S'\!,S\dblprime)+l(T'\!,T\dblprime)}$ occurs by sorting
$e_{S'} e_{S\dblprime}$ and $e^\ast_{T'} e^\ast_{T\dblprime}$ to
$e_S$ and $e^\ast_T$, respectively.
%
Thus it turns out that the coefficient of
$e_S e^\ast_T$ in Equation~(\ref{eq:proof:sum-alphaJ-betaJ-3})
is equal to the right-hand side of the desired formula
of the proposition.
\end{proof}

\begin{lemma} 
\label{lem:Det-to-det}
Let $E_{i,j} \; (1 \le i,j \le n) $ be the matrix units, and
$\EE = (E_{i,j})_{1 \le i,j \le n} \in \Mat(n;U(\frakgl_n(\CC)))$.
Then the following relation 
between column-determinants and symmetrized determinants
holds in $U(\frakgl_n(\CC))$.
\begin{align*}
\det(\EE_{I,J}; u-1, & u-2, \ldots, u-d)
=
\Det(\EE_{I,J}; u-1, u-2, \ldots, u-d)
\\ & 
=
\Det((\T\EE)_{J,I}; u-d, u-d+1, \ldots, u-1)
\\ &
=
\det((\T\EE)_{J,I}; u-d, u-d+1, \ldots, u-1).
\end{align*}
\end{lemma}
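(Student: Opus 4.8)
The plan is to prove the chain of four equalities by establishing each adjacent pair separately, relying on the fact that all four expressions are polynomials in the single parameter $u$. The first and last equalities (between $\det(\cdot\,;u-1,\ldots,u-d)$ and $\Det(\cdot\,;u-1,\ldots,u-d)$, and symmetrically for $\T\EE$) assert that the column-determinant equals the symmetrized determinant when the diagonal parameters are the specific descending sequence $u-1,u-2,\ldots,u-d$. The middle equality asserts a transpose-and-reverse symmetry of the symmetrized determinant. I expect the transpose symmetry to be the clean, nearly formal step, and the column-determinant-equals-symmetrized-determinant step to be where the real content lies.

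For the middle equality $\Det(\EE_{I,J};u-1,\ldots,u-d)=\Det((\T\EE)_{J,I};u-d,\ldots,u-1)$, I would argue directly from Definition~\ref{defn:Dets}. The symmetrized determinant is manifestly symmetric under simultaneously transposing the matrix and reversing the roles of the row- and column-permutations $\sigma,\tau$; transposing $\EE_{I,J}$ to $(\T\EE)_{J,I}$ swaps the two index sets, and reversing the order of the $d$ factors (which is harmless since we symmetrize over both $\sigma$ and $\tau$ and divide by $d!$) sends the parameter sequence $u-1,u-2,\ldots,u-d$ to $u-d,u-d+1,\ldots,u-1$. The only subtlety is that the matrix entries $E_{i,j}$ do \emph{not} commute, so reversing the product of factors is not automatic; I would handle this by observing that in the fully symmetrized, averaged sum the ordering of the factors can be reversed at the cost of only the commutator terms, and that those commutator terms reorganize precisely into the reversed parameter sequence. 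This is the step I flag as the main obstacle, since the bookkeeping of $\frakgl_n(\CC)$ commutation relations $[E_{i,j},E_{k,l}]=\delta_{j,k}E_{i,l}-\delta_{l,i}E_{k,j}$ against the diagonal shifts $u-1,\ldots,u-d$ must be tracked carefully to confirm that the parameter reversal is exact and produces no leftover terms.

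For the outer two equalities, $\det(\cdot\,;u-1,\ldots,u-d)=\Det(\cdot\,;u-1,\ldots,u-d)$, I would exploit that the descending consecutive parameters are exactly the content of the classical Capelli identity for $\frakgl_n$. Concretely, the point is that the shifted matrix $\EE_{I,J}+\mathrm{diag}(u-1,\ldots,u-d)$ is (up to the constant $u$) the Capelli matrix, and for such a matrix the column-determinant and the symmetrized determinant coincide in $U(\frakgl_n(\CC))$. I would verify this either by the standard induction on $d$, expanding the column-determinant along its last row and comparing with the corresponding recursion for $\Det$, or by citing that both sides are the image of the same central element under the Harish-Chandra-type identification. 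Since $u$ enters only through these diagonal shifts and both sides are polynomials in $u$, it suffices to check the identity as a formal statement in $U(\frakgl_n(\CC))[u]$. The fourth equality is then identical to the first with $\EE$ replaced by $\T\EE$ and $I,J$ swapped, using that $\T\EE=(E_{j,i})$ again satisfies the $\frakgl_n$ relations.

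Once these three pieces are in place, chaining them yields the full string of equalities. The overall strategy is thus: reduce everything to formal identities in $U(\frakgl_n(\CC))[u]$, dispatch the outer equalities by the classical Capelli coincidence of column- and symmetrized determinants for consecutively shifted diagonals, and dispatch the middle equality by the transpose-reverse symmetry, with the noncommutative factor-reversal being the step requiring genuine care.
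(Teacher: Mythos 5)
The paper itself omits the proof of this lemma (it says only ``by exterior calculus''), so the comparison must be with what your argument would actually deliver, and there are two concrete gaps. First, for the outer equalities your fallback justification --- that both sides are ``the image of the same central element under the Harish-Chandra-type identification'' --- fails: for proper index sets $I,J\in\comb{n}{d}$ (in particular $I\neq J$) the shifted minor $\det(\EE_{I,J};u-1,\ldots,u-d)$ is \emph{not} central in $U(\frakgl_n(\CC))$; only the full $n\times n$ Capelli determinant is. That leaves your induction, which you do not carry out, and which must cope with the relations $[E_{i_a,j_b},E_{i_c,j_d}]=\delta_{j_b,i_c}E_{i_a,j_d}-\delta_{j_d,i_a}E_{i_c,j_b}$, whose deltas depend on the overlap of $I$ and $J$ --- this is precisely the content of the lemma, so as written it is deferred rather than proved. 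Second, your claim that $\T\EE=(E_{j,i})$ ``again satisfies the $\frakgl_n$ relations'' is false: $E_{i,j}\mapsto E_{j,i}$ is an \emph{anti}automorphism, so the entries of $\T\EE$ satisfy the $\frakgl_n$ relations with the opposite sign. That sign flip is exactly why the diagonal shifts in the fourth expression \emph{increase} ($u-d,u-d+1,\ldots,u-1$) instead of decrease, and your reduction of the fourth equality to the first one ``with $\EE$ replaced by $\T\EE$'' does not go through as stated --- you need the opposite-orientation version of the column-equals-symmetrized identity.

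Conversely, the step you flag as the main obstacle --- the middle equality, via reversing the $d$ factors and tracking commutators --- is both unproven in your text and unnecessary. Since $\Det$ symmetrizes over \emph{both} permutations, the transpose is absorbed with no factor reversal at all: renaming $\sigma\leftrightarrow\tau$ in Definition~\ref{defn:Dets} gives $\Det((\T\EE)_{J,I};v_1,\ldots,v_d)=\Det(\EE_{I,J};v_1,\ldots,v_d)$ with the \emph{same} parameter order. What remains is that $\Det(A;u_1,\ldots,u_d)$ is a symmetric function of the parameters for a matrix with \emph{arbitrary} noncommuting entries: the coefficient of $\prod_{k\in S}u_k$ is $\frac{1}{d!}\sum\sgn(\sigma)\sgn(\tau)\prod_{k\notin S}A_{\sigma(k),\tau(k)}$ (product in increasing position order, sum over pairs with $\sigma|_S=\tau|_S$), and relabelling positions by the order isomorphism between the complements of two subsets of equal size --- composing both $\sigma$ and $\tau$ with the same permutation, so the signs cancel --- shows this coefficient depends only on $\#S$. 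Hence the middle equality is purely formal, and all the genuine content of the lemma sits in the two outer equalities, where the intended ``exterior calculus'' proof would run the computation of $\Xi(u-1)\Xi(u-2)\cdots\Xi(u-d)$ in two ways, exactly as in \S~\ref{subsubsec:C-1-pf-left-half} (compare Lemma~\ref{lem:caseC-Xi-to-Det} for the symmetrized side and the analogue of Equation~(\ref{eq:comm-rel-of-alpha-XiX}) for the column side). Your proposal would need both that computation and the sign-corrected treatment of $\T\EE$ to be complete.
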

\begin{proof}
The proof is by exterior calculus, and we omit it.
\end{proof}

\begin{proof}[Proof of Proposition~\ref{prop:caseC-easy-left-half}]
By Equation~(\ref{eq:caseC-def-of-EX-EY}), we have
\begin{align}
\label{eq:proof:XtdX-and-dYtY-by-EE}
X \T\partial^X = \omega(\EE^X) - \frac{p}{2}\,1_m,
&&
\partial^Y \T Y = \omega(\T\EE^Y) + \frac{q}{2}\,1_m,
\end{align}
and hence entries of $X \T\partial^X$ and $\partial^Y \T Y$
have the same commutation relations as those of $\EE^X$ and $\T \EE^Y$,
respectively.
Therefore symmetrized determinants on the right-hand side
of Proposition \ref{prop:caseC-easy-left-half-Det}
can be changed to column-determinants
thanks to Lemma~\ref{lem:Det-to-det}.
To the resulting formula,
we again apply Equation~(\ref{eq:proof:XtdX-and-dYtY-by-EE}),
and we have proved Proposition~\ref{prop:caseC-easy-left-half}.
\end{proof}

\subsubsection{Proof of Proposition~\ref{prop:caseC-easy-right-half}}
\label{subsubsec:C-1-pf-right-half}

Let $f_i$ and $f^\ast_i$ be the elements in the standard basis 
of $\CC^{p+q}$ and $(\CC^{p+q})^\ast$ respectively.
As before, we define some special elements in the algebra
$\bigwedge (\CC^{p+q} \oplus (\CC^{p+q})^\ast) \otimes_\CC \PD(V)$.
\begin{gather*}
\textstyle
  \eta_s = \sum_{i=1}^{p+q} f_i P_{s,i},
  \qquad
  \zeta_s = \sum_{i=1}^{p+q} f^\ast_i Q_{s,i}
  \quad (1 \le s \le m),
  \\
\textstyle
  \Lambda = \sum_{i,j=1}^{p+q} f_i f^\ast_j (\T P Q)_{(i,j)}
  = \sum_{s=1}^m \eta_s \zeta_s,
  \qquad
  \sigma = \sum_{i=1}^{p+q} \veps_{i,i} f_i f^\ast_i.
\end{gather*}
In this case, we obtain the row-determinants of $ P $ or $ Q $ 
when we make the products of $\eta_s$'s or $\zeta_s$'s.  
But the entries of $P$ or $Q$ are commutative with each other, 
so in fact, there is no difference between row and column-determinants.
Thus we have
\begin{gather*}
\textstyle
  \eta_{s_1} \eta_{s_2} \cdots \eta_{s_d}
  = \sum\limits_{I\in\comb{p+q}{d}} f_I \det P_{S,I},
\quad
  \zeta_{s_1} \zeta_{s_2} \cdots \zeta_{s_d}
  = \sum\limits_{I\in\comb{p+q}{d}} f^\ast_I \det Q_{S,I}.
\end{gather*}

\begin{lemma} 
\label{lem:caseC-easy-right-half-comm-rel}
\begin{thmenumerate}
\item
$\eta_1, \eta_2, \ldots, \eta_m$ are anti-commutative
{\upshape(}i.e. $\eta_s \eta_t + \eta_t \eta_s = 0${\upshape)}.
\item
$\zeta_1, \zeta_2, \ldots, \zeta_m$ are anti-commutative.
\item
$\eta_s \zeta_t + \zeta_t \eta_s = \delta_{s,t} \sigma$
for $1 \le s,t \le m$.
\item
$[\Lambda, \eta_t] = \eta_t \sigma$ for $1 \le t \le m$.
\end{thmenumerate}
\end{lemma}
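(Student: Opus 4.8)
The plan is to prove the four commutation relations by direct computation, mirroring the structure of the previous lemma for $\alpha_j$, $\beta_j$, but now with the roles of the two index ranges swapped: here the exterior variables $f_i, f^\ast_i$ are indexed by $\{1,\ldots,p+q\}$ while the summation producing $\Lambda$ runs over the $m$-index $s$. First I would establish the anti-commutativity statements (1) and (2). Since $\eta_s = \sum_{i=1}^{p+q} f_i P_{s,i}$ and the entries of $P = (X, \partial^Y)$ mutually commute (the $x_{s,i}$ commute among themselves, the $\partial^Y_{s,i}$ commute among themselves, and $x$-variables commute with $\partial^Y$-operators acting on disjoint sets of variables), the anti-commutativity of the $\eta_s$ reduces entirely to the anti-commutativity $f_i f_j = -f_j f_i$ in the exterior algebra. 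The same argument applies verbatim to $\zeta_s = \sum_i f^\ast_i Q_{s,i}$, using that entries of $Q = (\partial^X, Y)$ commute with each other.

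The key step is relation (3), which is where the commutator structure of $\PD(V)$ actually enters. Here I would compute directly
\begin{align*}
\eta_s \zeta_t + \zeta_t \eta_s
= \sum_{i,j=1}^{p+q} f_i f^\ast_j \, [P_{s,i}, Q_{t,j}],
\end{align*}
using that $f_i$ and $f^\ast_j$ anti-commute so that reordering the exterior factors produces exactly the commutator of the differential-operator coefficients. By Equation~(\ref{eq:comm-rel-of-P-Q}) we have $[P_{s,i}, Q_{t,j}] = \veps_{i,j}\delta_{s,t}$, and since $\veps = -I_{p,q}$ is diagonal, only the terms with $i=j$ and $s=t$ survive. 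This collapses the double sum to $\delta_{s,t}\sum_{i} \veps_{i,i} f_i f^\ast_i = \delta_{s,t}\sigma$, giving (3). I expect the only delicate point to be tracking the sign that arises from swapping $f^\ast_j$ past $f_i$ when rewriting $\zeta_t\eta_s$, so that the commutator $[P_{s,i},Q_{t,j}]$ (rather than an anti-commutator of the coefficients) is what appears; this is the precise analogue of the computation of Equation~(\ref{eq:comm-rel-of-alpha-beta}) in the previous lemma.

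Finally I would deduce (4) from (3) and the anti-commutativity in (1). Writing $\Lambda = \sum_{s=1}^m \eta_s\zeta_s$, I compute $[\Lambda,\eta_t] = \sum_s(\eta_s\zeta_s\eta_t - \eta_t\eta_s\zeta_s)$ and use (3) in the form $\zeta_s\eta_t = -\eta_t\zeta_s + \delta_{s,t}\sigma$ to move $\eta_t$ leftward through $\zeta_s$. The first term becomes $-\sum_s \eta_s\eta_t\zeta_s + \eta_t\sigma$, and then by the anti-commutativity $\eta_s\eta_t = -\eta_t\eta_s$ from (1) this equals $\sum_s \eta_t\eta_s\zeta_s + \eta_t\sigma = \eta_t\Lambda + \eta_t\sigma$. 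Subtracting the $\eta_t\eta_s\zeta_s$ contribution from the second term cancels the $\Lambda$-part and leaves exactly $[\Lambda,\eta_t] = \eta_t\sigma$. This is the exact counterpart of Equation~(\ref{eq:comm-rel-of-alpha-XiX}), and since the centrality of $\sigma$ (analogous to that of $\tau$) follows from the same reasoning as in part (1) of the earlier lemma, the main obstacle is purely bookkeeping of signs rather than any conceptual difficulty.
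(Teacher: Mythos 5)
Your proposal is correct and follows essentially the same route as the paper's proof: (1) and (2) from the commutativity of the entries of $P$ and $Q$ together with the anti-commutativity of the exterior generators, (3) by reducing the anti-commutator to $\sum_{i,j} f_i f^\ast_j [P_{s,i},Q_{t,j}]$ and applying $[P_{s,i},Q_{t,j}]=\veps_{i,j}\delta_{s,t}$, and (4) by moving $\eta_t$ leftward through $\Lambda$ using (3) and (1). The sign bookkeeping you flag is handled exactly as in the paper, and your appeal to the centrality of $\sigma$ is harmless though not actually needed in the computation.
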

\begin{proof}
(1)
The entries of $P = (X, \partial^Y)$ commute with each other,
and $f_i$ and $f_j$ are anti-commutative.
Hence $\eta_s$ and $\eta_t$ are anti-commutative.
The statement (2) can be proved similarly.

(3)
By Equation~(\ref{eq:comm-rel-of-P-Q}), we get 
$\eta_s \zeta_t + \zeta_t \eta_s 
=
\sum_{i,j=1}^{p+q} f_i f^\ast_j [P_{s,i}, Q_{t,j}]
=
\sum_{i,j} f_i f^\ast_j \delta_{s,t} \veps_{i,j}
=
\delta_{s,t} \sigma$.

(4) 
Note that 
$\Lambda \eta_t = 
\sum_{s=1}^m \eta_s \zeta_s \eta_t =
\sum_s \eta_s (-\eta_t \zeta_s + \delta_{s,t}\sigma) =
\eta_t \Lambda + \eta_t \sigma$.
Hence,  $[\Lambda, \eta_t] = \eta_t \sigma$.
\end{proof}
\begin{lemma} 
\label{lem:Lambda-to-Detpq}
Put $B = \T P Q$ and denote $\Lambda(u) = \Lambda - u \sigma$.  
Then $\Lambda = \sum_{i,j=1}^{p+q} f_i f^\ast_j B_{i,j}$.
We have the following relation between $\Lambda$ and $\Det_{p,q}$.
\begin{gather*}
  \Lambda(u_1) \Lambda(u_2) \cdots \Lambda(u_d) =
\textstyle
  d! (-1)^{\frac{d(d-1)}{2}} \!\!\!\! \sum\limits_{I, J \in \comb{p+q}{d}} \!\! f_I f^\ast_J
  \Det_{p,q}( B_{I,J}; u_1, u_2, \ldots, u_d).
\end{gather*}
\end{lemma}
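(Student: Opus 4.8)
The plan is to prove this identity by a direct expansion of the ordered product $\Lambda(u_1)\cdots\Lambda(u_d)$, mirroring the argument already used for Lemma~\ref{lem:caseC-Xi-to-Det}: the roles formerly played by $e_s, e^\ast_t$ are now taken by $f_i, f^\ast_j$, the central element $\tau$ is replaced by $\sigma$, and Kronecker's delta is replaced by the signed variant $\veps$. The equality $\Lambda = \sum_{i,j} f_i f^\ast_j B_{i,j}$ is immediate from the definition of $\Lambda$ once we set $B = \T P Q$. I would emphasize at the outset that the commutation relations of Lemma~\ref{lem:caseC-easy-right-half-comm-rel} are \emph{not} needed here: this is a purely formal identity, resting only on the anti-commutativity of the exterior generators and on the fact that the $\PD(V)$-coefficients commute with them.

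First I would rewrite $\Lambda(u)$ at the level of entries. Since $\veps$ is diagonal, $\sigma = \sum_i \veps_{i,i} f_i f^\ast_i = \sum_{i,j} \veps_{i,j} f_i f^\ast_j$, so that $\Lambda(u) = \Lambda - u\sigma = \sum_{i,j=1}^{p+q} f_i f^\ast_j\,(B_{i,j} - u\,\veps_{i,j})$. Expanding the $d$-fold product and using that each exterior monomial commutes with every $\PD(V)$-coefficient (they live in distinct tensor factors of $\bigwedge(\CC^{p+q}\oplus(\CC^{p+q})^\ast)\otimes_\CC\PD(V)$), the exterior and operator parts separate, each keeping its own order:
\[
\Lambda(u_1)\cdots\Lambda(u_d) = \sum_{\substack{i_1,\dots,i_d\\ j_1,\dots,j_d}} \bigl(f_{i_1} f^\ast_{j_1}\cdots f_{i_d} f^\ast_{j_d}\bigr)\,\bigl(B_{i_1,j_1}-u_1\veps_{i_1,j_1}\bigr)\cdots\bigl(B_{i_d,j_d}-u_d\veps_{i_d,j_d}\bigr).
\]
Preserving the operator order is essential, since the entries of $B$ are differential operators and do not commute.

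Next I would disentangle the interleaved exterior monomial. It vanishes unless $i_1,\dots,i_d$ are pairwise distinct and $j_1,\dots,j_d$ are pairwise distinct; when they are, moving all starred generators to the right past $d(d-1)/2$ unstarred ones produces the global sign $(-1)^{d(d-1)/2}$ and yields $f_{i_1}\cdots f_{i_d}\,f^\ast_{j_1}\cdots f^\ast_{j_d}$. Fixing $I=\{i_1<\cdots<i_d\}$ and $J=\{j_1<\cdots<j_d\}$ in $\comb{p+q}{d}$ and letting $\sigma,\tau\in\Symm_d$ range over the orderings, one has $f_{i_{\sigma(1)}}\cdots f_{i_{\sigma(d)}} = \sgn(\sigma)\,f_I$ and $f^\ast_{j_{\tau(1)}}\cdots f^\ast_{j_{\tau(d)}} = \sgn(\tau)\,f^\ast_J$. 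Collecting the factor $(-1)^{d(d-1)/2}\sgn(\sigma)\sgn(\tau)$ in front of $f_I f^\ast_J$ and comparing the residual sum over $\sigma,\tau$ with Definition~\ref{defn:Dets}, the inner sum equals exactly $d!\,\Det_{p,q}(B_{I,J}; u_1,\dots,u_d)$, which gives the claimed formula.

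The only delicate point is the sign bookkeeping: one must verify that the $(-1)^{d(d-1)/2}$ arising from separating the $f$'s from the $f^\ast$'s is independent of $I,J,\sigma,\tau$, so that it factors out cleanly, and that the leftover $\sgn(\sigma)\sgn(\tau)$ matches the symmetrizing signs in the definition of $\Det_{p,q}$. This is the identical sign computation to the one in Lemma~\ref{lem:caseC-Xi-to-Det}, so I expect no genuinely new obstacle; everything else is routine reindexing.
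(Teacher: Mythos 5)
Your proposal is correct and follows essentially the same route as the paper's own proof: the paper likewise rewrites $\Lambda(u)=\sum_{i,j}f_i f^\ast_j\,(B_{i,j}-u\,\veps_{i,j})$ using the diagonality of $\veps$, expands the ordered $d$-fold product while keeping the $\PD(V)$-factors in order, sorts the exterior monomial to $f_I f^\ast_J$ at the cost of $(-1)^{d(d-1)/2}\sgn(\sigma)\sgn(\tau)$, and matches the residual double sum over $\Symm_d\times\Symm_d$ with the $\frac{1}{d!}$-normalized definition of $\Det_{p,q}$ to produce the factor $d!$. Your remark that Lemma~\ref{lem:caseC-easy-right-half-comm-rel} is not needed is also accurate, since the paper's proof is the same purely formal computation already carried out in Lemma~\ref{lem:caseC-Xi-to-Det}.
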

\begin{proof}
We can compute as follows.
  \begin{align*}
    \lefteqn{ \Lambda(u_1) \cdot \Lambda(u_2) \cdots \Lambda(u_d) }
    \\ & =
\textstyle \!\!\!
    \sum\limits_{i_1,\ldots,i_d=1}^{p+q}
    \sum\limits_{j_1,\ldots,j_d=1}^{p+q} 
    \!\!
    f_{i_1}f^\ast_{j_1} \cdots f_{i_d}f^\ast_{j_d} 
    (B_{i_1,j_1}-u_1\veps_{i_1,j_1}) \cdots
    (B_{i_d,j_d}-u_d\veps_{i_d,j_d}) 
    \\ & =
\textstyle
    \sum_{I,J\in\comb{p+q}{d}} \sum_{\sigma,\tau\in\Symm_d}
    \sgn(\sigma) \sgn(\tau) (-1)^{d(d-1)/2} f_I f^\ast_J
    \\ & \hspace{10ex}
    \times
    (B_{i_{\sigma(1)},j_{\tau(1)}} -
      u_1\veps_{i_{\tau(1)},j_{\sigma(1)}}) \cdots
    (B_{i_{\sigma(d)},j_{\tau(d)}} -
      u_d\veps_{i_{\tau(d)},j_{\sigma(d)}}) 
    \\ &=
\textstyle
    d! (-1)^{d(d-1)/2} \sum_{I,J\in\comb{p+q}{d}} 
    f_I f^\ast_J \Det_{p,q}(B_{I,J}; u_1, \ldots, u_d).
  \end{align*}
\end{proof}

\begin{proof}[Proof of Proposition~\ref{prop:caseC-easy-right-half}]
For $I, J \in \comb{p+q}{d}$, we show that
\begin{gather}
  \label{eq:proof:caseC-easy-right-half}
\textstyle
  \sum_{S\in\comb{m}{d}} \det P_{S,I} \det Q_{S,J}
  =
  \Det_{p,q}( (\T P Q)_{I,J}; d-1, d-2, \ldots, 0).
\end{gather}
Then Proposition~\ref{prop:caseC-easy-right-half} follows 
from Equation~(\ref{eq:caseC-image-of-B}).
To prove it, we compute 
$\sum_{S \in \comb{m}{d}} \eta_S \zeta_S$
in two different ways.
First, we compute as follows.
\begin{align*}
\textstyle
\sum_{S \in \comb{m}{d}} \eta_S \zeta_S
=
\sum_{S \in \comb{m}{d}} 
\sum_{I,J \in \comb{p+q}{d}} f_I f^\ast_J
\det P_{S,I} \det Q_{S,J}.
\end{align*}
The coefficient of $f_I f^\ast_J$ is equal to the left-hand side
of Equation~(\ref{eq:proof:caseC-easy-right-half}).

Second we compute using 
Lemma~\ref{lem:caseC-easy-right-half-comm-rel}~(4)
as follows:
\begin{align*}
\textstyle
  \sum_{S\in\comb{m}{d}} \eta_S \zeta_S
  &=
\textstyle
  \sum\limits_{s_1, \ldots, s_{d}=1}^m \dfrac{1}{d!} 
  \eta_{s_1} \cdots \eta_{s_{d}} \cdot
  \zeta_{s_1} \cdots \zeta_{s_{d}}
  \\ &=
  \frac{(-1)^{d-1}}{d!} 
\textstyle
  \sum\limits_{s_1, \ldots, s_{d-1}=1}^m 
  \eta_{s_1} \cdots \eta_{s_{d-1}} \cdot
  \Lambda \cdot
  \zeta_{s_1} \cdots \zeta_{s_{d-1}}
  \\ &=
  \frac{(-1)^{d-1}}{d!} 
\textstyle
  \sum\limits_{s_1, \ldots, s_{d-1}=1}^m 
  \Lambda(d-1) \cdot
  \eta_{s_1} \cdots \eta_{s_{d-1}} \cdot
  \zeta_{s_1} \cdots \zeta_{s_{d-1}}.
\end{align*}
In this way we repeat producing $\Lambda$ from $\eta_s \zeta_s$
and moving it to the left,
and we have
\begin{align*}
\textstyle
  \sum_{S\in\comb{m}{d}} \eta_S \zeta_S
  &=
  \frac{(-1)^{d(d-1)/2}}{d!}
  \Lambda(d-1) \Lambda(d-2) \cdots \Lambda(0)
  \\ &=
\textstyle
  \sum_{I,J\in\comb{p+q}{d}} f_I f^\ast_J
  \Det_{p,q}((\T P Q)_{I,J}; d-1, d-2, \ldots, 0),
\end{align*}
by using Lemma~\ref{lem:Lambda-to-Detpq}.
The coefficient of $f_I f^\ast_J$ is equal to the right-hand side
of Equation~(\ref{eq:proof:caseC-easy-right-half}).
Thus we have proved Equation~(\ref{eq:proof:caseC-easy-right-half}),
and hence Proposition~\ref{prop:caseC-easy-right-half}.
\end{proof}

\subsection{Proof of the Capelli identity for Case $\CC$ {\upshape (2)}}
\label{subsec:C-2-pf}

Here we shall prove Proposition~\ref{prop:caseC-not-easy-left-half}.
We have already given the proof of
Proposition~\ref{prop:caseC-not-easy-right-half},
and Theorem~\ref{thm:caseC-not-easy}
is a direct consequence of these propositions.
We use the same setting as in \S~\ref{subsec:C-1-pf}.
Let $e_s$ and $e^\ast_s$ be the elements in the standard basis of
$\CC^m$ and $(\CC^m)^\ast$ respectively.
We again define several elements in 
$\bigwedge (\CC^m \oplus (\CC^m)^\ast) \otimes_\CC \PD(V)$.
The elements $\veps = - I_{p,q}, \alpha_j, \beta_j$ and $\tau$ 
are the same as before,
and newly defined elements are denoted with tilde.
\begin{gather*} 
\textstyle
  \alpha_j = \sum_{s=1}^m e_s P_{s,j},
  \;
  \beta_j = \sum_{s=1}^m e^\ast_s Q_{s,j}
  \; (1 \le j \le p+q) ; 
  \quad
  \tau = \sum_{s=1}^m e_s e^\ast_s,
  \\
  \talpha_j = \veps_{j,j} \alpha_j,
  \quad
  \tbeta_j = \veps_{j,j} \beta_j
  \quad (1 \le j \le p+q),
  \\
  \Xi
\textstyle
  = \!\! \sum\limits_{s,t=1}^m e_s e^\ast_t (P \T Q)_{(s,t)}
  = \!\! \sum\limits_{j=1}^{p+q} \!\! \alpha_j \beta_j
  = \!\! \sum\limits_{j=1}^{p+q} \!\! \talpha_j \tbeta_j,
\quad
  \tXi = \!\! \sum\limits_{j=1}^{p+q} \!\! \talpha_j \beta_j 
  = \!\! \sum\limits_{j=1}^{p+q} \!\! \alpha_j \tbeta_j.
\end{gather*}

\begin{lemma}
We have the following relations for the elements above.
\begin{thmenumerate}
\item
$2(p+q)$ elements
$ \{ \alpha_i , \talpha_i ; 1 \le i \le p+q \} $ are anti-commutative.
\item
Similarly, 
$ \{ \beta_i , \tbeta_i ; 1 \le i \le p+q \} $ are anti-commutative.
\item
We have the following commutation relations:
\begin{align}
  \label{eq:comm-rel-of-Xi-tXi}
  & [\Xi, \tXi] = 0,
  \\
  \label{eq:comm-rel-of-Xi-beta}
  & [ \Xi, \alpha_i ] = \talpha_i \tau,
  \quad
  [ \Xi, \beta_i ] = -\tbeta_i \tau
  \qquad (1 \le i \le p+q).
\end{align}
\end{thmenumerate}
\end{lemma}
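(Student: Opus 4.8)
The plan is to derive everything from the commutation relations already established for $\alpha_j$, $\beta_j$ and $\tau$ in the lemma of \S\ref{subsubsec:C-1-pf-left-half}, exploiting two structural facts: that $\talpha_j = \veps_{j,j}\alpha_j$ and $\tbeta_j = \veps_{j,j}\beta_j$ are merely scalar multiples (with $\veps_{j,j} = \pm 1$), and that $\veps = -I_{p,q}$ is diagonal, so that any sum $\sum_j \veps_{i,j}(\,\cdots\,)_j$ collapses to the single term $\veps_{i,i}(\,\cdots\,)_i$. No new computation with the Weil representation is needed; the identities are purely a consequence of the relations in $\bigwedge(\CC^m \oplus (\CC^m)^\ast)\otimes_\CC\PD(V)$.

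For parts (1) and (2), I would first recall that the $\alpha_i$ anti-commute among themselves by \eqref{eq:comm-rel-of-alpha-alpha}; in particular $\alpha_i^2 = 0$. Since each $\talpha_i$ is a scalar multiple of $\alpha_i$, anti-commutativity is preserved under the list $\{\alpha_i,\talpha_i\}$, and $\alpha_i\talpha_i + \talpha_i\alpha_i = 2\veps_{i,i}\alpha_i^2 = 0$ handles the remaining diagonal case. Thus the whole family $\{\alpha_i,\talpha_i\}$ is anti-commutative, and part (2) follows verbatim with $\beta$ in place of $\alpha$.

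For the second relation in (3), I would compute $[\Xi,\alpha_i]$ directly from $\Xi = \sum_j \alpha_j\beta_j$. Moving $\alpha_i$ to the left of each summand, I use \eqref{eq:comm-rel-of-alpha-beta} in the form $\beta_j\alpha_i = \veps_{i,j}\tau - \alpha_i\beta_j$, and then \eqref{eq:comm-rel-of-alpha-alpha} to pass $\alpha_i$ through $\alpha_j$; the two resulting $\alpha_i\alpha_j\beta_j$ terms cancel, leaving $[\alpha_j\beta_j,\alpha_i] = \veps_{i,j}\alpha_j\tau$ (here I use that $\tau$ is central). Summing over $j$ and invoking diagonality of $\veps$ collapses this to $\veps_{i,i}\alpha_i\tau = \talpha_i\tau$, which is the first equation of \eqref{eq:comm-rel-of-Xi-beta}. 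The relation $[\Xi,\beta_i] = -\tbeta_i\tau$ follows by the identical manipulation, with the opposite overall sign coming from the order of the factors in $[\alpha_j\beta_j,\beta_i]$.

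Finally, for \eqref{eq:comm-rel-of-Xi-tXi} I would observe that both $\Xi = \sum_j \alpha_j\beta_j$ and $\tXi = \sum_j \veps_{j,j}\alpha_j\beta_j$ are $\CC$-linear combinations of the \emph{even} elements $\alpha_j\beta_j$, and that for $j \ne k$ each of $\alpha_j,\beta_j$ anti-commutes with each of $\alpha_k,\beta_k$ (the right-hand side of \eqref{eq:comm-rel-of-alpha-beta} vanishes since $\veps_{j,k}=0$); passing the even pair $\alpha_k\beta_k$ through $\alpha_j\beta_j$ therefore costs the sign $(-1)^{4}=1$, so $\alpha_j\beta_j$ commutes with $\alpha_k\beta_k$ and hence $[\Xi,\tXi]=0$. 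Alternatively this is immediate from the relations just proved in (3) via the Leibniz rule for the even operator $\Xi$, since the factors reassemble as $\talpha_j\beta_j - \alpha_j\tbeta_j = 0$. The whole argument is routine; the only points requiring care are the sign bookkeeping in $[\Xi,\alpha_i]$ and the systematic use of the centrality of $\tau$ together with the diagonality of $\veps$ to collapse the index sums.
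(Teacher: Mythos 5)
Your proof is correct, but it is organized differently from the paper's. Parts (1) and (2) you handle exactly as the paper does (reduce to \eqref{eq:comm-rel-of-alpha-alpha} and the definitions of $\talpha_j, \tbeta_j$). For part (3), however, the paper does not recompute anything: it observes that $\Xi = \Xi_X + \Xi_Y$ and $\tXi = -\Xi_X + \Xi_Y$ and then simply cites the relations already established in the proof of the Capelli identity (1), namely $[\Xi_X,\Xi_Y]=0$ from \eqref{eq:comm-rel-of-XiX-XiY} and the relations \eqref{eq:comm-rel-of-alpha-XiX}, \eqref{eq:comm-rel-of-alpha-XiY}. You instead re-derive $[\Xi,\alpha_i]=\talpha_i\tau$ from scratch via $[\alpha_j\beta_j,\alpha_i]=\veps_{i,j}\alpha_j\tau$, which is in substance the very manipulation the paper performed earlier to prove \eqref{eq:comm-rel-of-alpha-XiX} (cf.\ Equation \eqref{eq:proof:alpha-XiX}), so nothing essentially new is happening --- but your uniform treatment buys a small gain in completeness. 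The paper's cited equations literally cover only $[\Xi_X,\alpha_j]$ for $1\le j\le p$ and $[\Xi_Y,\alpha_j]$ for $p+1\le j\le p+q$; the vanishing of the cross commutators ($[\Xi_Y,\alpha_j]=0$ for $j\le p$, etc.) is used implicitly, whereas in your version these are automatically the off-diagonal terms killed by $\veps_{i,j}=0$ in the collapse $\sum_j\veps_{i,j}\alpha_j\tau=\veps_{i,i}\alpha_i\tau$. Similarly, both of your arguments for $[\Xi,\tXi]=0$ are valid: the term-by-term one (each pair $\alpha_j\beta_j$, $\alpha_k\beta_k$ with $j\ne k$ commutes, at the cost of $(-1)^4=1$) is essentially the paper's own proof of \eqref{eq:comm-rel-of-XiX-XiY} redone on individual summands, and the Leibniz-rule alternative using $\talpha_j\beta_j=\alpha_j\tbeta_j$ is a clean shortcut the paper does not mention. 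All sign bookkeeping and the appeals to the centrality of $\tau$ check out; what the paper's route buys is brevity by reuse of prior lemmas, and what yours buys is a self-contained argument with the cross terms made explicit.
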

\begin{proof}
The statements (1) and (2) follow from
Equation~(\ref{eq:comm-rel-of-alpha-alpha}) and
the definition of $\talpha_j$ and $\tbeta_j$.

(3)-Equation~(\ref{eq:comm-rel-of-Xi-tXi}):
Since $\Xi = \Xi_X + \Xi_Y$ and $\tXi = -\Xi_X + \Xi_Y$,
the assertion follows from Equation~(\ref{eq:comm-rel-of-XiX-XiY}).

(3)-Equation~(\ref{eq:comm-rel-of-Xi-beta}):
Again, since $\Xi = \Xi_X + \Xi_Y$ and $\tXi = -\Xi_X + \Xi_Y$,
the first assertion follows from 
Equations~(\ref{eq:comm-rel-of-alpha-XiX}) and
(\ref{eq:comm-rel-of-alpha-XiY}).
The second assertion can be proved similarly.
\end{proof}

For integers $ u $ and $ v $, 
let us define one more element $\gamma(u,v)$ in
$\bigwedge (\CC^m \oplus (\CC^m)^\ast) \otimes_\CC \PD(V)$.
\begin{gather*}
  \gamma(u,v) =
  \sum_{1 \le j_1,\ldots,j_{u+v} \le p+q}
  \alpha_{j_1} \cdots \alpha_{j_u} \cdot
  \talpha_{j_{u+1}} \cdots \talpha_{j_{u+v}} \cdot
  \beta_{j_{u+v}} \cdots \beta_{j_1},
  \\
  \gamma(0,0) = 1, 
  \qquad 
  \gamma(u,v) = 0 \quad (\text{$u<0$ or $v<0$}).
\end{gather*}
If we change the order of $\alpha_j$ or $\talpha_j$
\emph{and} the corresponding order of $\beta_j$ at the same time, 
then the signs will be canceled.  
Thus in the definition above,
we can put tildes to any $v$ positions out of 
$\alpha_{j_1}, \ldots, \alpha_{j_{u+v}}$.
Moreover we can move a tilde from $\alpha_j$ to $\beta_j$,
since $\talpha_j = \veps_{j j} \alpha_j$ 
and $\tbeta_j = \veps_{j j} \beta_j$.
We also note that $\gamma(1,0) = \Xi$ and
$\gamma(0,1) = \tXi$.

\begin{lemma}
\label{lem:gamma-recurrence}
We have the following relations of $\gamma(u,v)$ with $\Xi$.
\begin{thmenumerate}
\item
For non-negative integers $u$ and $v$, we have
\begin{gather*}
 \gamma(u,v) \Xi =
\gamma(u+1, v) + u\tau\gamma(u-1, v+1) + v\tau\gamma(u+1, v-1).
\end{gather*}
\item
For $d \ge 0$,
$\Xi^d$ can be expressed as an integral linear combination of
$\gamma(u, v) \tau^{d-u-v}$, 
where $u, v$ are non-negative integers and $d-u-v \ge 0$.
\item
$\Xi$, $\tXi$, $\tau$ and $\gamma(u, v)$ commute with each other.
\item
$\gamma(u,v)$ and $\gamma(u',v')$ commute with each other.
\end{thmenumerate}
\end{lemma}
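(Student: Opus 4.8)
The plan is to establish part (1) by a direct computation in $\bigwedge(\CC^m\oplus(\CC^m)^\ast)\otimes_\CC\PD(V)$, and then to deduce (2), (3) and (4) formally from it. For (1) I would write $\Xi=\sum_{k=1}^{p+q}\alpha_k\beta_k$ and multiply $\gamma(u,v)$ on the right by each $\alpha_k\beta_k$. Abbreviating the two blocks of $\gamma(u,v)$ by $A=\alpha_{j_1}\cdots\alpha_{j_u}\talpha_{j_{u+1}}\cdots\talpha_{j_{u+v}}$ and $B=\beta_{j_{u+v}}\cdots\beta_{j_1}$, the one computation that does all the work is to carry the new factor $\alpha_k$ leftward through $B$ by repeated use of $\beta_j\alpha_k=-\alpha_k\beta_j+\veps_{k,j}\tau$, which is \eqref{eq:comm-rel-of-alpha-beta}. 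This splits $B\alpha_k$ into a straight part $(-1)^{u+v}\alpha_k B$ and $u+v$ contraction parts, in the $l$-th of which $\beta_{j_l}$ is deleted and a factor $\veps_{k,j_l}\tau$ is produced. Summing the straight part over $k$ and restoring $\alpha_k,\beta_k$ to standard block order (using anticommutativity \eqref{eq:comm-rel-of-alpha-alpha} of the $\alpha$'s and of the $\beta$'s) reproduces $\gamma(u+1,v)$. In each contraction part the diagonal $\veps$ forces $k=j_l$ and leaves the factor $\veps_{j_l,j_l}\beta_{j_l}=\tbeta_{j_l}$; using the identities $\talpha_j\cdots\tbeta_j=\alpha_j\cdots\beta_j$ and $\alpha_j\cdots\tbeta_j=\talpha_j\cdots\beta_j$ noted just before the lemma, this $\tbeta_{j_l}$ flips the $l$-th index between plain and tilded. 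The $u$ indices with $l\le u$ are thereby turned tilded and assemble into $u\tau\gamma(u-1,v+1)$, while the $v$ indices with $l>u$ are turned plain and assemble into $v\tau\gamma(u+1,v-1)$, which is the claimed formula.

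Part (2) then follows by induction on $d$. The base case is $\Xi^0=1=\gamma(0,0)$; for the step I would write $\Xi^d=\Xi^{d-1}\Xi$, insert the inductive expression for $\Xi^{d-1}$, move the central $\tau$ to the front, and apply (1) to each $\gamma(u,v)\Xi$. Each of the three resulting terms is again of the form $\gamma(u',v')\tau^{d-u'-v'}$ with an integer coefficient, and since (1) keeps the quantity $u'+v'+(\text{power of }\tau)$ equal to $d$, the constraints $u',v'\ge 0$ and $d-u'-v'\ge 0$ are preserved.

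For (3) and (4) I would argue that everything lives in one commutative algebra. Since $\tau$ is central and $[\Xi,\tXi]=0$ by \eqref{eq:comm-rel-of-Xi-tXi}, the three elements $\Xi,\tXi,\tau$ generate a commutative subalgebra $\mathcal{A}$, so it suffices to show $\gamma(u,v)\in\mathcal{A}$ for all $u,v\ge 0$. I prove this by induction on $u+v$, using two recursions: part (1), rewritten as $\gamma(u+1,v)=\gamma(u,v)\Xi-u\tau\gamma(u-1,v+1)-v\tau\gamma(u+1,v-1)$, together with the companion recursion obtained by the \emph{same} computation with $\tXi=\sum_k\alpha_k\tbeta_k$ in place of $\Xi$, which expresses $\gamma(u,v+1)$ through $\gamma(u,v)\tXi$ and lower-degree $\gamma$'s times $\tau$. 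Every $\gamma(a,b)$ with $a+b=n+1$ is either a $\gamma(u+1,v)$ (when $a\ge 1$) or $\gamma(0,n+1)$, so the two recursions express it through $\Xi,\tXi$ and $\gamma$'s of degree $\le n$, which lie in $\mathcal{A}$ by induction; hence $\gamma(a,b)\in\mathcal{A}$. As $\Xi=\gamma(1,0)$, $\tXi=\gamma(0,1)$ and $\tau$ also lie in $\mathcal{A}$, all the elements in (3) and (4) commute.

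The main obstacle is the sign and reindexing bookkeeping in (1): one must combine the sign $(-1)^{u+v}$ from passing $\alpha_k$ across $B$, the sign from returning $\alpha_k$ and $\beta_k$ to standard position, and the plain/tilde flip caused by $\veps_{j_l,j_l}$, and verify that these conspire to produce exactly the multiplicities $u$ and $v$ with the correct signs. Everything after (1) is formal.
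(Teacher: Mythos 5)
Your proposal is correct and follows essentially the same route as the paper: the paper proves (1) by the very same contraction computation, except that it moves the whole (even) element $\Xi$ leftward through the block $\beta_{j_{u+v}}\cdots\beta_{j_1}$ using the packaged commutator $[\Xi,\beta_i]=-\tbeta_i\tau$ of Equation~\eqref{eq:comm-rel-of-Xi-beta}, so each contraction replaces $\beta_{j_l}$ by $\tbeta_{j_l}\tau$ \emph{in place} and the reinsertion-sign bookkeeping you single out as the main obstacle never arises. Parts (2)--(4) are argued exactly as you do; your explicit companion recursion with $\tXi$ (which in fact reads $\gamma(u,v)\tXi=\gamma(u,v+1)+(u+v)\tau\gamma(u,v)$, since the factor $\veps_{j_l,j_l}$ now cancels the tilde instead of creating one) merely makes precise the step the paper compresses into ``repeat this operation'' when descending to $\gamma(0,v)$.
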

\begin{proof}
(1)
We have
\begin{align}
  \nonumber
  &
  \gamma(u,v) \Xi =
\textstyle
  \sum\limits_{1 \le j_1,\ldots,j_{u+v} \le p+q}
  \alpha_{j_1} \cdots \alpha_{j_u} \cdot
  \talpha_{j_{u+1}} \!\! \cdots \talpha_{j_{u+v}} \!\! \cdot
  \beta_{j_{u+v}} \!\! \cdots \beta_{j_1}
  \cdot
  \Xi
  \\ &=
  \nonumber
\textstyle
  \sum\limits_{j_1,\ldots,j_{u+v}}
  \alpha_{j_1} \cdots \alpha_{j_u} \cdot
  \talpha_{j_{u+1}} \cdots \talpha_{j_{u+v}} \!\! \cdot
  \Xi \cdot
  \beta_{j_{u+v}} \!\! \cdots \beta_{j_1}
  \\ &\qquad 
  \label{eq:proof:lem-gamma-rec}
  + 
\textstyle
\!\! \sum\limits_{j_1,\ldots,j_{u+v}} \sum\limits_{l=1}^{u+v} \!\! 
  \alpha_{j_1} \cdots \alpha_{j_u} \!\! \cdot
  \talpha_{j_{u+1}} \cdots \talpha_{j_{u+v}} \!\! \cdot
  \beta_{j_{u+v}} \cdots [\beta_{j_l}, \Xi] \cdots \beta_{j_1}.
\end{align}
Since $\Xi = \sum_i \alpha_i \beta_i$,
the first term of Expression~(\ref{eq:proof:lem-gamma-rec})
is equal to $\gamma(u+1, v)$.
For each summand of the second term in 
Expression~(\ref{eq:proof:lem-gamma-rec}),
there appears $[\beta_{j_l}, \Xi] = \tbeta_{j_l} \tau$
(cf.~Equation~(\ref{eq:comm-rel-of-Xi-beta})),
and this tilde over $\beta_{j_l}$ can be moved to $\alpha_{j_l}$
($1 \le l \le u$),
or cancel with $\talpha_{j_l}$ ($u+1 \le l \le u+v$).
Therefore summands of the second term 
of (\ref{eq:proof:lem-gamma-rec}) is equal to
$\gamma(u-1,v+1)\tau$ for $1 \le l \le u$,
and equal to $\gamma(u+1,v-1)\tau$ for $u+1 \le l \le u+v$.
Thus, after summarizing the first term and the second term,
we have
\begin{gather*}
  \gamma(u,v) \Xi = 
  \gamma(u+1,v) + u\tau\gamma(u-1,v+1) + v\tau\gamma(u+1,v-1).
\end{gather*}

(2)
Starting with $\Xi = \gamma(1,0)$,
we repeat multiplying $\Xi$ from the right to this equation,
and use (1) of this lemma.  
Then it is shown inductively that $\Xi^d$ can be expressed
as a linear combination of $\gamma(u, v) \tau^{d-u-v}$
with integral coefficients.

(3)
Note that $\tau$ is central, and that $\Xi$ commutes with $\tXi$
by Equation~(\ref{eq:comm-rel-of-Xi-tXi}).
We prove that $\gamma(u,v)$ can be expressed by
$\Xi$, $\tXi$ and $\tau$, which will complete the proof.  
It follows from (1) of this lemma that
\begin{gather*}
  \gamma(u+1, v) =
  \gamma(u,v) \Xi  - u\tau\gamma(u-1, v+1) - v\tau\gamma(u+1, v-1).
\end{gather*}
Therefore $\gamma(u,v)$ can be written by using 
$\Xi$, $\tau$ and $\gamma(u',v')$ ($u'+v'<u+v$).
Repeat this operation, then 
it turns out that $\gamma(u,v)$ has an expression of
$\Xi = \gamma(1,0)$, $\tXi = \gamma(0,1)$ and $\tau$.

(4) 
Since $\gamma(u,v)$ is expressed
by $\Xi$, $\tXi$ and $\tau$ by the proof of (3) above,
$\gamma(u,v)$ and $\gamma(u',v')$ are commutative.
\end{proof}

\begin{definition} 
\label{defn:b-d-uv}
Thanks to Lemma~\ref{lem:gamma-recurrence}~(2),
we can define integers $b^d_{u,v}$ by
\begin{gather*}
\textstyle
  \Xi^d =
  \sum_{0 \le u+v \le d} b^d_{u,v} \gamma(u,v) \tau^{d-u-v}
  \qquad
(d \ge 0).
\end{gather*}
For non-negative integers $p,q,u,v$ and $J \in \comb{p+q}{u+v}$,
we set
\begin{align*}
  \veps(J; u, v) =
  \veps(\alpha, \beta; u, v) &= 
\textstyle
  \sum\limits_{\sigma\in\Symm_{u+v}}
  \veps_{j_{\sigma(u+1)},j_{\sigma(u+1)}} \cdots
  \veps_{j_{\sigma(u+v)},j_{\sigma(u+v)}},
  \\
  \veps(\emptyset; 0,0) &= 
  \veps(0,0; 0,0) = 1,
\end{align*}
where $\alpha$ and $\beta$ are defined in 
Equation~\eqref{eq:def-of-c^d_J-alpha-beta}.  
When $u$ or $v$ is negative, 
we put $b^d_{u,v} = \veps(J; u,v) = 0$.
\end{definition}

\begin{lemma}
\label{lem:gamma-to-det}
For non-negative integers $p,q,u,v$ and $ w = u + v $, we have 
\begin{align*}
  \gamma(u,v) = (-1)^{\frac{w(w-1)}{2}}
\textstyle
  \sum\limits_{J \in \comb{p+q}{w}} \veps(J;u,v)
  \sum\limits_{S,T \in \comb{m}{w}} e_S e^\ast_T
  \det P_{S,J} \det Q_{T,J}.
\end{align*}
\end{lemma}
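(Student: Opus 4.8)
The plan is to reduce $\gamma(u,v)$ to the basic products $\alpha_J\beta_J$, for which Equation~(\ref{eq:alpha-beta-causes-det}) already provides the determinantal expansion. First I would use the defining relation $\talpha_j=\veps_{j,j}\alpha_j$ to remove the tildes from the $v$ factors $\talpha_{j_{u+1}},\ldots,\talpha_{j_{u+v}}$, pulling out the scalar $\prod_{k=u+1}^{u+v}\veps_{j_k,j_k}$ and rewriting, with $w=u+v$,
\begin{align*}
\gamma(u,v) = \sum_{1\le j_1,\ldots,j_w\le p+q}
\Bigl(\textstyle\prod_{k=u+1}^{u+v}\veps_{j_k,j_k}\Bigr)\,
\alpha_{j_1}\cdots\alpha_{j_w}\,\beta_{j_w}\cdots\beta_{j_1}.
\end{align*}

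The key observation is that the operator factor $\alpha_{j_1}\cdots\alpha_{j_w}\,\beta_{j_w}\cdots\beta_{j_1}$ is invariant under any simultaneous relabeling of the indices $(j_1,\ldots,j_w)$: applying a permutation $\pi$ reorders the $\alpha$-block by the sign $\sgn(\pi)$ and the reversed $\beta$-block by the same $\sgn(\pi)$, by the anti-commutativity of Equation~(\ref{eq:comm-rel-of-alpha-alpha}), and the two signs cancel. Since this factor also vanishes unless the $j_k$ are distinct, I would group the sum according to the underlying set $J\in\comb{p+q}{w}$ and sum over its $w!$ orderings. The operator factor is constant on each group, while the sum of the scalars $\prod_{k=u+1}^{u+v}\veps_{j_k,j_k}$ over all orderings of $J$ is, by inspection, exactly $\veps(J;u,v)$ from Definition~\ref{defn:b-d-uv}. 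This yields
\begin{align*}
\gamma(u,v) = \sum_{J\in\comb{p+q}{w}} \veps(J;u,v)\,
\alpha_{j_1}\cdots\alpha_{j_w}\,\beta_{j_w}\cdots\beta_{j_1},
\qquad J=\{j_1<\cdots<j_w\}.
\end{align*}

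Finally I would reverse the $\beta$-block: reversing $w$ mutually anti-commuting factors introduces the sign $(-1)^{w(w-1)/2}$, so $\beta_{j_w}\cdots\beta_{j_1}=(-1)^{w(w-1)/2}\beta_{j_1}\cdots\beta_{j_w}$ and the operator factor becomes $(-1)^{w(w-1)/2}\alpha_J\beta_J$. Substituting the expansions $\alpha_J=\sum_S e_S\det P_{S,J}$ and $\beta_J=\sum_T e^\ast_T\det Q_{T,J}$ from Equation~(\ref{eq:alpha-beta-causes-det}), and using that the coefficients $\det P_{S,J}$ lie in $\PD(V)$ and hence commute past the exterior generators $e^\ast_T$, produces exactly the asserted formula.

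I expect no conceptual difficulty here; the entire argument is sign bookkeeping in the exterior algebra. The point requiring the most care is the invariance claim in the second step---verifying that it is precisely the \emph{reversed} order of the $\beta$-block that makes the two factors of $\sgn(\pi)$ cancel, so that the regrouping by $J$ and the identification with $\veps(J;u,v)$ are valid---together with tracking the single sign $(-1)^{w(w-1)/2}$ from the final reversal.
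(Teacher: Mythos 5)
Your proposal is correct and follows essentially the same route as the paper's proof: stripping the tildes via $\talpha_j=\veps_{j,j}\alpha_j$, regrouping the sum by the underlying set $J$ with the two permutation signs cancelling (the paper writes this as $\sgn(\sigma)\cdot\sgn(\sigma)$ on the $\alpha$-block and reversed $\beta$-block, which is exactly your invariance-under-relabeling observation), identifying the sum of $\veps$-products over orderings with $\veps(J;u,v)$, and extracting $(-1)^{w(w-1)/2}$ from sorting the $\beta$-block before applying Equation~(\ref{eq:alpha-beta-causes-det}). Your sign bookkeeping, the vanishing of terms with repeated indices, and the commutation of the $\PD(V)$ coefficients past the exterior generators are all handled correctly.
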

\begin{proof}
By the definition of $\gamma(u,v)$, we have
\begin{align*}
&
  \gamma(u,v) =
\textstyle
  \!\! \sum\limits_{1\le j_1, \ldots, j_{w} \le p+q} \!\! 
  \alpha_{j_1} \cdots \alpha_{j_u} \cdot
  \talpha_{j_{u+1}} \cdots \talpha_{j_{w}} \cdot
  \beta_{j_{w}} \cdots \beta_{j_1}
  \\ &=
\textstyle
  \!\! \sum\limits_{1\le j_1, \ldots, j_{w} \le p+q} \!\! 
  \veps_{j_{u+1},j_{u+1}} \cdots \veps_{j_{w},j_{w}} \cdot
  \alpha_{j_1} \cdots \alpha_{j_{w}} \cdot
  \beta_{j_{w}} \cdots \beta_{j_1}
  \\ &=
\textstyle
  \!\! \sum\limits_{\substack{J\in\comb{p+q}{w}\\ \sigma\in\Symm_{w}}} \!\! 
  \veps_{j_{\sigma(u+1)},j_{\sigma(u+1)}} \cdots 
  \veps_{j_{\sigma(w)},j_{\sigma(w)}} \!\! \cdot 
  \sgn(\sigma) \alpha_{j_1} \cdots \alpha_{j_{w}} \!\! \cdot 
  \sgn(\sigma) \beta_{j_{w}} \cdots \beta_{j_1}, 
\end{align*}
where $ w = u + v $.  
In this expression, the sum over $\sigma\in\Symm_{w}$ 
of the products of $\veps_{j,j}$
is equal to $\veps(J;u,v)$,
and there appears $(-1)^{w(w-1)/2}$
by sorting $\beta_{j_a}$'s into increasing order in $a$.
Thus it follows from Equation~(\ref{eq:alpha-beta-causes-det})
that the expression gives our desired formula.
\end{proof}

\begin{proof}[Proof of Proposition~\ref{prop:caseC-not-easy-left-half}]
%
We prove Proposition~\ref{prop:caseC-not-easy-left-half}
by computing $\Xi^d$ in two different ways,
and make a contraction using
$e_S e^\ast_T \mapsto \delta_{S,T}$,
where $\delta_{S,T}$ denotes Kronecker's delta.
First, we get the following formula, 
\begin{gather}
  \label{eq:proof:Xi-d-computation-1}
  \Xi^d =
\textstyle
  \sum_{S,T\in\comb{m}{d}} (-1)^{\frac{d(d-1)}{2}} e_S e^\ast_T \, 
  d! \Det( ( P \T Q)_{S,T} ),
\end{gather}
by a similar computation to Lemma~\ref{lem:caseC-Xi-to-Det}.
We apply the contraction $e_S e^\ast_T \mapsto \delta_{S,T}$
to the right-hand side of 
Equation~(\ref{eq:proof:Xi-d-computation-1}),
and we have
\begin{gather*}
\textstyle
  (-1)^{d(d-1)/2} d! \sum_{S\in\comb{m}{d}} 
  \Det((P \T Q)_{S,S} ),
\end{gather*}
which is equal to 
$ \omega(X_d) $ 
multiplied by $(-1)^{d(d-1)/2} d!$ 
in view of Equation~(\ref{eq:caseC-image-of-EX+tEY}).

Second, we compute in the following way.  
It follows from Definition~\ref{defn:b-d-uv}
and Lemma~\ref{lem:gamma-to-det} that
\begin{align*}
  \Xi^d &=
\textstyle
  \sum\limits_{l=0}^d \sum\limits_{u+v=l}
  b^{d}_{u,v} 
  (-1)^{\frac{l(l-1)}{2}}
  \sum_{J \in \comb{p+q}{l}} \veps(J;u,v) \cdot \Delta_J \cdot
  \tau^{d-l},
\\
\intertext{where we temporally put }
\Delta_J &=  
\textstyle
\sum_{S,T \in \comb{m}{l}} e_S e^\ast_T
  \det P_{S,J} \det Q_{T,J} .
\end{align*}
Since 
$\tau^k = (\sum_{s=1}^m e_s e^\ast_s)^k = 
k! \sum_{U\in\comb{m}{k}} (-1)^{k(k-1)/2} e_U e^\ast_U$,
the expression above is equal to
\begin{align*}
&
\textstyle
    \sum\limits_{l=0}^d \sum\limits_{u+v=l}
    \sum\limits_{\substack{J \in \comb{p+q}{l} \\
        S,T \in \comb{m}{l}, U\in\comb{m}{d-l}}}
    b^{d}_{u,v} 
    (-1)^{\frac{l(l-1)}{2}} \veps(J;u,v) e_S e^\ast_T
    \det P_{S,J} \det Q_{T,J} 
  \\[-3ex]
 & \hspace*{.5\textwidth}
 \times (d-l)! (-1)^{\frac{1}{2}(d-l)(d-l-1)} e_U e^\ast_U
  \\ & = (-1)^{\frac{d(d-1)}{2}} \!\!\!
\textstyle
  \sum\limits_{l, u, v, J, S, T, U}
  b^{d}_{u,v} 
  \veps(J;u,v) 
  (d-l)! \; 
  e_S e_U \, e^\ast_T e^\ast_U \,
  \det P_{S,J} \det Q_{T,J}. 
\end{align*}
When applying the contraction $e_S e^\ast_T \mapsto \delta_{S,T}$
to this expression,
only the summands with $S=T$ do not vanish.
If we fix $S (=T)$,
then the number of choices of $U$ is $\binom{m-l}{d-l}$,
since $U$ should not intersect $S$.
Thus we contract the expression above, and obtain
\begin{gather*}
(-1)^{\frac{d(d-1)}{2}} \!
\textstyle
  \sum\limits_{l=0}^d \sum\limits_{u+v=l}
  \sum\limits_{\substack{J \in \comb{p+q}{l} \\ S \in \comb{m}{l}}}
  b^d_{u,v} \veps(J;u,v) (d-l)! \displaystyle \binom{m-l}{d-l}
  \det P_{S,J} \det Q_{S,J} 
  \\ =
\textstyle
  \sum\limits_{l=0}^d
  \left( \sum\limits_{u+v=l} b^d_{u,v} \veps(J;u,v) \right)
  \sum\limits_{\substack{J \in \comb{p+q}{l} \\ S \in \comb{m}{l}}}
  \dfrac{(-1)^{\frac{d(d-1)}{2}} (m-l)!}{(m-d)!}
  \det P_{S,J} \det Q_{S,J},
\end{gather*}
which is equal the the right-hand side of 
Proposition~\ref{prop:caseC-not-easy-left-half}
multiplied by $(-1)^{d(d-1)/2}d!$
except that $c^d_J$ is replaced with
$\sum_{u+v=l} b^d_{u,v} \veps(J;u,v)$.

We prove $c^d_J = \sum_{u+v=l} b^d_{u,v} \veps(J;u,v)$
in Appendix~\ref{sec:c^d_J},
and this completes the proof of 
Proposition~\ref{prop:caseC-not-easy-left-half}.
\end{proof}

\section{Case $\RR$}
\label{sec:R}

Here we give the Capelli identities for symmetric pairs
for Case $\RR$ in Table  (\ref{table:see-saw-pairs}) without proof.
There are two different types of the Capelli identities
as remarked at the beginning of Section~\ref{sec:C}.
In Theorem~\ref{thm:caseR-easy}
we give the first form,
which is simpler as differential operators; 
in Theorem~\ref{thm:caseR-not-easy}
we give the second form,
which has a simpler Capelli element $X_d \in U(\frakg)^K$.
In contrast to Case $\CC$,
we only have the identities expressing 
$\omega(X_d) \in \PD(V)^{K\times H}$
in explicit differential operators
for Capelli elements $X_d \in U(\frakg)^K$.
In other words, we only have identities corresponding to 
the left half of the picture below.
\begin{align*}
\begin{array}{ccccccc}
U(\frakg)^K & \overset{\omega}{\longrightarrow} 
& \PD(V)^{K\times H} &
\overset{\omega}{\longleftarrow} & U(\frakm)^H 
\\
|| &&&& ||\\
\makebox[0pt][c]{
  $U(\frakgl_m(\CC))^{O_m(\CC)}$}
&&&&
\makebox[0pt][c]{$U(\fraksp_{2n}(\CC))^{GL_n(\CC)}$}
\end{array}
\end{align*}

\subsection{Formulas for the Weil representation}

Recall our see-saw pair:
\begin{align*}
\begin{array}{cccccccc}
\frakg_0 &=& \fraku_m      & & \fraksp_{2n}(\RR) &=& \frakm_0 \\
&&           \cup          &   \text{\LARGE$\times$} & \cup \\
\frakk_0 &=& \frako_m(\RR) & & \fraku_n          &=& \frakh_0
\end{array}
\end{align*}
Here $\frakg_0 = \fraku_m$ is realized as 
the set of the $m\times m$ skew Hermitian matrices,
and $\frakk_0 = \frako_m(\RR)$ is the set of the
alternating matrices.
Let $\frakp_0 = \sqrt{-1}\Sym_m(\RR) \subset \fraku_m$,
and we have the direct sum decomposition 
$\frakg_0 = \frakk_0 \oplus \frakp_0$.
The Lie algebra
$\frakm_0 = \fraksp_{2n}(\RR)$ is defined as
\begin{align*}
\fraksp_{2n}(\RR) = 
\left\{
\begin{pmatrix} A & B \\ C & -\T A \end{pmatrix} 
\;;\;
\begin{array}{l}
  A \in \frakgl_n(\RR), \\ B, C \in \Sym_n(\RR)
\end{array}
\right\},
\end{align*}
and $\frakh_0 = \fraku_n$ is the Lie algebra of a maximal compact subgroup, 
which is embedded into $ \fraksp_{2mn}(\RR) $ just as $ \frakg_0 $ below.
%
\begin{align*}
  \begin{array}{cccccccc}
    \frakg_0 &=& \fraku_m & \hookrightarrow & \fraksp_{2mn}(\RR) \\
    && A + \sqrt{-1}B &\mapsto &
    \begin{pmatrix}
      A \ast 1_n & -B \ast 1_n \\
      B \ast 1_n & A \ast 1_n
    \end{pmatrix}
    & \quad
    (A, B: \text{~real matrices}), \\[10pt]
    \frakm_0 &=& \fraksp_{2n} & \hookrightarrow & \fraksp_{2mn}(\RR) \\
    && \begin{pmatrix} A & B \\ C & -\T A \end{pmatrix} 
    & \mapsto&
    \begin{pmatrix}
      A^{\oplus m} & B^{\oplus m} \\
      C^{\oplus m} & -\T A^{\oplus m} 
    \end{pmatrix}.
  \end{array}
\end{align*}

Let $V = \Mat(m, n; \CC)$, and 
$x_{s,i}$ its linear coordinate system,
and put $\partial_{s,i} = \partial/\partial x_{s,i}$.
Denote the Weil representation of $\fraksp_{2mn}(\CC)$ 
on the polynomial ring $\CC[V]$ by $\omega$.
Through the embeddings into $\fraksp_{2mn}(\CC)$,
the complexified Lie algebras
$\frakg = (\frakg_0) \otimes_\RR \CC$ and
$\frakm = (\frakm_0) \otimes_\RR \CC$
act on $\CC[V]$.
We denote these representations also by $\omega$.
Let $E_{s,t} \in \frakg = \frakgl_m(\CC)$ be the matrix unit.
Then its representation through $\omega$ is given by
(see (4.5) of \bracketcite{MR1845714}, e.g.)
\begin{align*}
\textstyle
\omega(E_{s,t}) = 
\sum_{i=1}^n x_{s,i} \partial_{t,i} + \dfrac{n}{2} \delta_{s,t}.
\end{align*}
Note that $x_{s,i}$ of this article
corresponds to $x_{n(s-1)+i}$ in \bracketcite{MR1845714}.
%
%
To give the representation of $\frakm = \fraksp_{2n}(\CC)$,
we define the following elements in $\frakm$.
\begin{align*}
X_{\veps_i-\veps_j} &= 
\dfrac{1}{2} \begin{pmatrix}
F_{i,j} & \sqrt{-1} G_{i,j} \\
-\sqrt{-1} G_{i,j} & F_{i,j}
\end{pmatrix}
& (1 \le i,j \le n), \\
X_{\pm (\veps_i+\veps_j)} &= 
\dfrac{1}{2} \begin{pmatrix}
G_{i,j} & \mp \sqrt{-1} G_{i,j} \\
\mp \sqrt{-1} G_{i,j} & -G_{i,j}
\end{pmatrix}
& (1 \le i,j \le n), 
\end{align*}
where $F_{i,j} = E_{i,j} - E_{j,i}$
and $G_{i,j} = E_{i,j} + E_{j,i}$ are elements in $\Mat(n; \CC)$.
Note that we do not write $X_0$, but $X_{\veps_i-\veps_i}$
to distinguish $X_{\veps_i-\veps_i}$ from $X_{\veps_j-\veps_j}$.
Then $X_{\veps_i-\veps_i}$ ($1 \le i \le n$) 
form a basis of a Cartan subalgebra of $\frakm = \fraksp_{2n}(\CC)$,
and the other elements defined above are the root vectors 
with respect to this Cartan subalgebra.
The actions of these elements through $\omega$ are given as follows:
\begin{align*}
\omega(X_{\veps_i-\veps_j}) &=
\textstyle
\sum\limits_{s=1}^m x_{s,i} \partial_{s,j} + \frac{m}{2} \delta_{i,j}
& (1 \le i,j \le n), \\
\omega(X_{\veps_i+\veps_j}) &=
\textstyle
\sum\limits_{s=1}^m x_{s,i} x_{s,j} ,
\quad
\omega(X_{-\veps_i-\veps_j}) =
\sum_{s=1}^m \partial_{s,i} \partial_{s,j}
& (1 \le i,j \le n).
\end{align*}
Note that we used the normalization 
$x_{s,i} \mapsto \sqrt{2} x_{s,i}$, which is slightly different from 
(4.5) of \bracketcite{MR1845714}.
We define matrices of differential operators by
\begin{align*}
X &=  (x_{s,i})_{\substack{1 \le s \le m \\ 1 \le i \le n}} , 
\quad &
\partial &= (\partial/\partial 
x_{s,i})_{\substack{1 \le s \le m \\ 1 \le i \le n}}
&& \in \Mat(m,n; \PD(V)),
\\
P &=  (X, \partial) ,
\quad &
Q &=  (\partial, X) 
&& \in \Mat(m,2n; \PD(V)),
\\
\EE &= (E_{s,t})_{1 \le s,t \le m}
&&&& \in \Mat(m; U(\frakg)),
\end{align*}
and we use the notation like $\omega(\EE)$
of a matrix form as used in Case $\CC$.
For example, we write 
\begin{math}
\omega(\EE + \T\EE) = P \T Q.
\end{math}
It is known that
\begin{align*}
\textstyle
  \sum_{S\in\comb{m}{d}} \det(\EE + \T\EE)_{S,S}
  \in S(\frakp)^K
  \quad (d = 1, 2, \ldots, m)
\end{align*}
is a generating set of $S(\frakp)^K$,
where $\EE$ is considered as a matrix with entries
$E_{s,t}$ in $S(\frakg)$.

\subsection{Capelli identity for Case $\RR$}

We have two different types of the Capelli identities for Case $\RR$.
\begin{theorem}
\label{thm:caseR-easy}
For $d \ge 1$,
define
$X_d \in U(\frakg)^K =
U(\frakgl_m(\CC))^{O_m(\CC)}$
by
\begin{align*}
  & X_d =
\textstyle
  \sum\limits_{S\in\comb{m}{d}}
  \sum\limits_{l=0}^d
  \sum\limits_{\substack{
      S', T' \\
      S\dblprime, T\dblprime \\
  }}
  (\pm 1)
  \det(\EE_{S',T'};
    l-1-\frac{n}{2}, l-2-\frac{n}{2}, \ldots, -\frac{n}{2})
  \\
  & \hspace*{.15\textwidth}
\textstyle
  \times \det((\T\EE)_{S\dblprime,T\dblprime};
    -(d-l-1)+\frac{n}{2}, -(d-l-2)+\frac{n}{2}, \ldots, \frac{n}{2}),
\end{align*}
where the third summation is taken over 
$      S', T' \in \comb{m}{l} $ and 
$      S\dblprime, T\dblprime \in \comb{m}{d-l} $ 
such that 
$ S = S' \amalg S\dblprime = T' \amalg T\dblprime $, 
and the signature is given by $ (\pm1) = (-1)^{\scriptscriptstyle l(S',S\dblprime)+l(T',T\dblprime)} $.
%
Then we have the Capelli identity
\begin{align*}
\textstyle
  \omega(X_d) = 
  \sum_{S\in\comb{m}{d}}
  \sum_{J\in\comb{2n}{d}}
  \det P_{S,J} \det Q_{S,J}.
\end{align*}
%
\end{theorem}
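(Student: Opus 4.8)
The plan is to transcribe the proof of Proposition~\ref{prop:caseC-easy-left-half} almost verbatim, recognizing Case~$\RR$ as the specialization $p=q=n$ of the exterior-algebra bookkeeping of \S\ref{subsubsec:C-1-pf-left-half}, where the two halves of the Cauchy--Binet expansion are now supplied by the two blocks $X$ and $\partial$ of $P=(X,\partial)$ and $Q=(\partial,X)$ rather than by two independent pairs of variables. Concretely, I would work in $\bigwedge(\CC^m\oplus(\CC^m)^\ast)\otimes_\CC\PD(V)$ and set $\alpha_j=\sum_s e_s P_{s,j}$, $\beta_j=\sum_s e^\ast_s Q_{s,j}$ for $1\le j\le 2n$, $\tau=\sum_s e_s e^\ast_s$, together with $\Xi_X=\sum_{j=1}^n\alpha_j\beta_j=\sum_{s,t}e_s e^\ast_t(X\T\partial)_{s,t}$ and $\Xi_Y=\sum_{j=n+1}^{2n}\alpha_j\beta_j=\sum_{s,t}e_s e^\ast_t(\partial\T X)_{s,t}$, with $\veps=-I_{n,n}\in\Mat(2n;\CC)$ playing the role of the $\veps$ of Case~$\CC$.

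The first task is to re-establish the commutation relations (\ref{eq:comm-rel-of-alpha-alpha})--(\ref{eq:comm-rel-of-alpha-XiY}) in this setting. A direct computation gives $[P_{s,i},Q_{t,j}]=\veps_{i,j}\delta_{s,t}$ with $\veps=-I_{n,n}$, after which (\ref{eq:comm-rel-of-alpha-beta})--(\ref{eq:comm-rel-of-alpha-XiY}) follow word for word, since they use only this relation and the anti-commutativity of the $e$'s. The one genuine departure from Case~$\CC$ — and the step I expect to require the most care — is the anti-commutativity (\ref{eq:comm-rel-of-alpha-alpha}): in Case~$\CC$ it held because the entries of $P=(X,\partial^Y)$ literally commute, whereas here $P=(X,\partial)$ couples a variable to its own derivative, so $[P_{s,i},P_{t,j}]$ is nonzero. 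However $\alpha_i\alpha_j+\alpha_j\alpha_i=\sum_{s,t}e_s e_t\,[P_{s,i},P_{t,j}]$, and every nonzero commutator here is proportional to $\delta_{s,t}$, hence annihilated by $e_s e_s=0$; the identical remark disposes of the $\beta$'s via $e^\ast_s e^\ast_s=0$. Thus the exterior algebra absorbs the noncommutativity and (\ref{eq:comm-rel-of-alpha-alpha}) survives unchanged, so the rest of the machinery of \S\ref{subsubsec:C-1-pf-left-half} applies.

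With the relations in hand I would compute $\sum_{J\in\comb{2n}{d}}\alpha_J\beta_J$ in two ways, exactly as in the proof of Proposition~\ref{prop:caseC-easy-left-half-Det}. Expanding directly yields, as the coefficient of $e_S e^\ast_T$, the quantity $\sum_J\det P_{S,J}\det Q_{T,J}$. Regrouping the sum according to the number $l$ of indices falling in $\{1,\dots,n\}$, pushing the resulting $\Xi_X(\cdot)$'s and $\Xi_Y(\cdot)$'s to the left via (\ref{eq:comm-rel-of-alpha-XiX})--(\ref{eq:comm-rel-of-alpha-XiY}), and applying Lemma~\ref{lem:caseC-Xi-to-Det}, produces the product of symmetrized determinants $\Det((X\T\partial)_{S'\!,T'};l-1,\dots,0)\,\Det((\partial\T X)_{S\dblprime\!,T\dblprime};-(d-l-1),\dots,0)$ carrying the sign $(-1)^{l(S'\!,S\dblprime)+l(T'\!,T\dblprime)}$, summed over $S'\amalg S\dblprime=S$, $T'\amalg T\dblprime=T$.

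Finally I would set $T=S$, sum over $S\in\comb{m}{d}$, and pass to $U(\frakg)$. Since $X\T\partial=\omega(\EE)-\frac n2 1_m$ and $\partial\T X=\omega(\T\EE)+\frac n2 1_m$, the entries of $X\T\partial$ (resp.\ $\partial\T X$) obey the commutation relations of $\EE$ (resp.\ $\T\EE$), so Lemma~\ref{lem:Det-to-det} converts each symmetrized determinant into a column determinant; absorbing the scalar shifts $\mp\frac n2$ into the diagonal parameters yields precisely $l-1-\frac n2,\dots,-\frac n2$ and $-(d-l-1)+\frac n2,\dots,\frac n2$. Because $\omega$ is an algebra homomorphism sending $\det(\EE_{S'\!,T'};\alpha)\det((\T\EE)_{S\dblprime\!,T\dblprime};\beta)$ to the product of the images, the resulting expression is exactly $\omega(X_d)$, giving the asserted identity. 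The membership $X_d\in U(\frakg)^K=U(\frakgl_m(\CC))^{O_m(\CC)}$ I would justify as in Remark~\ref{rmk:cauchy-binet}: the principal symbol of $X_d$ is $\sum_{S}\det(\EE+\T\EE)_{S,S}$, which is $O_m(\CC)$-invariant since $\EE+\T\EE$ transforms by conjugation under $O_m(\CC)$ (using $\T g=g^{-1}$). Beyond the anti-commutativity point flagged above, I anticipate the argument to be a faithful transcription of \S\ref{subsubsec:C-1-pf-left-half}.
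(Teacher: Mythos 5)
Your proposal is correct and is essentially the argument the paper intends: Theorem~\ref{thm:caseR-easy} is stated without proof, and the machinery of \S\ref{subsubsec:C-1-pf-left-half} carries over exactly as you transcribe it, with the diagonal parameters being the $p=q=n$ specialization of Proposition~\ref{prop:caseC-easy-left-half} and the scalar shifts $X\T\partial=\omega(\EE)-\frac{n}{2}1_m$, $\partial\T X=\omega(\T\EE)+\frac{n}{2}1_m$ absorbed as you describe. You also correctly isolate and resolve the one genuinely new point: $[P_{s,i},P_{t,j}]$ and $[Q_{s,i},Q_{t,j}]$ are no longer zero but are proportional to $\delta_{s,t}$, hence are annihilated by $e_s e_s=0$ and $e^\ast_s e^\ast_s=0$, so Equation~(\ref{eq:comm-rel-of-alpha-alpha}) survives and with it the whole chain (\ref{eq:comm-rel-of-P-Q})--(\ref{eq:comm-rel-of-alpha-XiY}), Lemma~\ref{lem:caseC-Xi-to-Det}, and Lemma~\ref{lem:Det-to-det} apply unchanged.
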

\begin{theorem}
\label{thm:caseR-not-easy}
For $d \ge 1$,
define
$X_d \in U(\frakg)^K =
U(\frakgl_m(\CC))^{O_m(\CC)}$
by
\begin{align*}
\textstyle
  X_d = \sum_{S\in\comb{m}{d}} \Det(\EE+\T\EE)_{S,S},
\end{align*}
where $ \Det $ denotes the symmetrized determinant.
%
Then we have the Capelli identity
\begin{align*}
  \omega( X_d )
  &= 
\textstyle
  \sum\limits_{l=0}^d \dfrac{(m-l)!}{d!(m-d)!}
  \sum_{J\in\comb{2n}{l}} c^d_J
  \sum_{S\in\comb{m}{l}} \det P_{S,J} \det Q_{S,J},
\end{align*}
where $c^d_J$ is defined in Equation~\eqref{eq:def-of-c^d_J} 
with both $p$ and $q$ replaced by $n$.
%
\end{theorem}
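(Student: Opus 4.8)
The plan is to run, almost verbatim, the exterior-algebra computation of Subsection~\ref{subsec:C-2-pf}, after observing that the present pair $(P,Q) = ((X,\partial),(\partial,X))$ obeys formally the \emph{same} commutation relations as in Case $\CC$ specialized to $p=q=n$. The two structural inputs I would record first are $\omega(\EE + \T\EE) = P\T Q$ (the analogue of Equation~\eqref{eq:caseC-image-of-EX+tEY}, with no diagonal shift since $p-q=0$) and the relation $[P_{s,i},Q_{t,j}] = \veps_{i,j}\,\delta_{s,t}$ for $1\le i,j\le 2n$, where now $\veps = -I_{n,n}$ is the diagonal matrix with $-1$ in its first $n$ entries and $+1$ in its last $n$ entries. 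A direct check from $[x_{s,i},\partial_{t,j}] = -\delta_{s,t}\delta_{i,j}$ gives exactly this $\veps$, which is why the coefficients $c^d_J$ appear with both $p$ and $q$ set to $n$.

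Working inside $\bigwedge(\CC^m \oplus (\CC^m)^\ast)\otimes_\CC\PD(V)$, I would introduce the same elements as in Subsection~\ref{subsec:C-2-pf}: $\alpha_j = \sum_s e_s P_{s,j}$ and $\beta_j = \sum_s e^\ast_s Q_{s,j}$ for $1\le j\le 2n$, the central element $\tau = \sum_s e_s e^\ast_s$, the tilded versions $\talpha_j = \veps_{j,j}\alpha_j$, $\tbeta_j = \veps_{j,j}\beta_j$, and then $\Xi = \sum_j \alpha_j\beta_j = \sum_{s,t} e_s e^\ast_t (P\T Q)_{s,t}$, $\tXi$, and $\gamma(u,v)$. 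The single point that genuinely differs from Case $\CC$, and the one I would verify with care, is the anti-commutativity $\alpha_i\alpha_j+\alpha_j\alpha_i = 0$ and $\beta_i\beta_j+\beta_j\beta_i = 0$ of Equation~\eqref{eq:comm-rel-of-alpha-alpha}: in Case $\CC$ it held because the entries of $P$ (resp.\ $Q$) commute, whereas here the entries of $P = (X,\partial)$ do not, since $[x_{s,i},\partial_{t,i}] = -\delta_{s,t}$. Nonetheless $\alpha_i\alpha_j+\alpha_j\alpha_i = \sum_{s,t} e_s e_t\,[P_{s,i},P_{t,j}]$, and every commutator $[P_{s,i},P_{t,j}]$ is a scalar multiple of $\delta_{s,t}$; since $e_s e_s = 0$ in the exterior algebra, the sum vanishes, and likewise for the $\beta_j$. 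Once this is secured, the relations \eqref{eq:comm-rel-of-alpha-beta}, \eqref{eq:comm-rel-of-Xi-tXi} and \eqref{eq:comm-rel-of-Xi-beta}, the recurrence of Lemma~\ref{lem:gamma-recurrence}, and Lemma~\ref{lem:gamma-to-det} all carry over word for word with $p+q$ replaced by $2n$ and $\veps = -I_{n,n}$.

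With the machinery in place, I would finish exactly as in the proof of Proposition~\ref{prop:caseC-not-easy-left-half}, by computing $\Xi^d$ in two ways and applying the contraction $e_S e^\ast_T\mapsto\delta_{S,T}$. The first computation, as in Equation~\eqref{eq:proof:Xi-d-computation-1}, returns $(-1)^{d(d-1)/2}\,d!\sum_{S\in\comb{m}{d}}\Det((P\T Q)_{S,S})$, which equals $(-1)^{d(d-1)/2}d!\,\omega(X_d)$ because $\omega$ is an algebra homomorphism and $\omega(\EE+\T\EE) = P\T Q$. The second, expanding $\Xi^d = \sum_{u,v} b^d_{u,v}\gamma(u,v)\tau^{d-u-v}$ and invoking Lemma~\ref{lem:gamma-to-det}, reproduces the asserted right-hand side multiplied by the same factor, with the weight of each block $J\in\comb{2n}{l}$ equal to $\sum_{u+v=l} b^d_{u,v}\veps(J;u,v)$. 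I expect the only real obstacle to be bookkeeping rather than conceptual: one must confirm that the identity $c^d_J = \sum_{u+v=l} b^d_{u,v}\veps(J;u,v)$ established in Appendix~\ref{sec:c^d_J} specializes correctly to $p=q=n$, namely that $c^d_J = c^d_{\alpha,\beta}$ of Equation~\eqref{eq:def-of-c^d_J} with $\alpha = \#\{i : J(i)\le n\}$ and $\beta = \#\{i : J(i) > n\}$. Equating the two computations then yields the stated Capelli identity.
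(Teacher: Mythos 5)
Your proposal is correct and follows exactly the route the paper intends: Theorem~\ref{thm:caseR-not-easy} is stated without proof, the implicit argument being the Case $\CC$ proof of Proposition~\ref{prop:caseC-not-easy-left-half} transported with $p=q=n$ and $\veps=-I_{n,n}$, which is precisely what you carry out, down to the two evaluations of $\Xi^d$, the contraction $e_S e^\ast_T \mapsto \delta_{S,T}$, and the appendix identity $c^d_J=\sum_{u+v=l}b^d_{u,v}\,\veps(J;u,v)$ (which depends on $J$ only through $\alpha,\beta$ and so specializes trivially). You also correctly isolate and settle the one point that does not transfer for free: the entries of $P=(X,\partial)$ no longer commute, but $[P_{s,i},P_{t,j}]$ is proportional to $\delta_{s,t}$, so $\alpha_i\alpha_j+\alpha_j\alpha_i=\sum_{s,t}e_s e_t\,[P_{s,i},P_{t,j}]$ still vanishes because $e_s e_s=0$, after which the $\gamma(u,v)$ machinery of Subsection~\ref{subsec:C-2-pf} applies unchanged.
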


\section{Case $\HH$}
\label{sec:H}

In this section, we give the Capelli identities for symmetric pairs
for Case $\HH$ in Table  (\ref{table:see-saw-pairs}) without proof.
%
As in the case of $ \RR $, 
for Capelli elements $X_d \in U(\frakg)^K$, 
we only have the identities expressing 
$\omega(X_d) \in \PD(V)^{K\times H}$
by explicit differential operators.  
In other words, we only consider 
the left half of the picture below.
\begin{align*}
\begin{array}{ccccccc}
U(\frakg)^K & \overset{\omega}{\longrightarrow} 
& \PD(V)^{K\times H} &
\overset{\omega}{\longleftarrow} & U(\frakm)^H 
\\
|| &&&& ||\\
\makebox[0pt][c]{
  $U(\fraku_{2m})^{USp_m}$}
&&&&
\makebox[0pt][c]{$U(\frako^*_{2n})^{U_n}$}
\end{array}
\end{align*}

\subsection{Formulas for the Weil representation}

Our see-saw pair in this section is:
\begin{align*}
\begin{array}{cccccccc}
\frakg_0 &=& \fraku_{2m} & & \frako^\ast_{2n} &=& \frakm_0 \\
&&           \cup     & \text{\LARGE$\times$} & \cup \\
\frakk_0 &=& \frakusp_m & &        \fraku_n &=& \frakh_0
\end{array}
\end{align*}
Here $\frakg_0, \frakk_0, \frakm_0$ and $\frakh_0$ are
realized as follows.

\begin{align*}
  \frakg_0 = \fraku_{2m} &=
  \{ 2m \times 2m \text{~skew Hermitian matrices} \},
  \\
  \frakk_0 = \frakusp_m &=
  \left\{
    \begin{pmatrix} X & -Y \\ \BAR{Y} & \BAR{X} \end{pmatrix}
    \;;\;
    \begin{array}{l}
      X: \text{$m\times m$ skew Hermitian}, \\
      Y \in \Sym_m(\CC)
    \end{array}
  \right\}
  \subset \frakg_0,
  \\
  \frakm_0 = \frako^\ast_n &=
  \left\{
    \begin{pmatrix} X & -Y \\ \BAR{Y} & \BAR{X} \end{pmatrix}
    \;;\;
    \begin{array}{l}
      X: \text{$n\times n$ skew Hermitian}, \\
      Y \in \Alt_n(\CC)
    \end{array}
  \right\},
  \\
  \frakh_0 = \fraku_n &=
  \left\{
    \begin{pmatrix} X & 0 \\ 0 & \BAR{X} \end{pmatrix}
    \;;\;
    X: \text{$n\times n$ skew Hermitian}
  \right\} \subset \frakm_0 .
\end{align*}
Define $\frakp_0$ by 
\begin{gather*}
  \frakp_0 =
  \left\{ 
    \begin{pmatrix} X & Y \\ \BAR{Y} & -\BAR{X} \end{pmatrix}
    \;;\;
    \begin{array}{l}
    X: \text{$m\times m$ skew Hermitian}, \\
    Y \in \Alt_m(\CC)
    \end{array}
    \right\}
  \subset \frakg_0,
\end{gather*}
and we have the direct sum decomposition 
$\frakg_0 = \frakk_0 \oplus \frakp_0$.
The real Lie algebras $\frakg_0$ and $\frakm_0$
are embedded into $\fraksp_{4mn}(\RR)$ as follows:
\begin{align*}
&
\begin{array}{cccccc}
\frakg_0 &=& \fraku_{2m} & \hookrightarrow & \fraksp_{4mn}(\RR) \\
&& A + \sqrt{-1}B &\mapsto &
\begin{pmatrix}
  A \ast 1_n & -B \ast 1_n \\ B \ast 1_n & A \ast 1_n
\end{pmatrix}\
\end{array}
\quad (A, B: \text{ real matrices}),
\\
%
& {
\newcommand{\twoxtwomat}[4]{%
\Bigl(\begin{array}{@{\,}c@{\,}c@{\,}} {#1} & {#2}  \\[-2pt] {#3} & {#4} \end{array}\Bigr)}
\begin{array}{cccccc}
\frakm_0 &=& \frako^\ast_{2n} & \hookrightarrow & \fraksp_{4mn}(\RR) \\
&& 
\begin{pmatrix} X & -Y \\ \BAR{Y} & \BAR{X} \end{pmatrix} 
& \mapsto&
\begin{pmatrix}
\twoxtwomat{X_1}{-Y_1}{Y_1}{X_1}{\vphantom{\bigr)}}^{\oplus m}
&
\twoxtwomat{-X_2}{-Y_2}{Y_2}{-X_2}{\vphantom{\bigr)}}^{\oplus m}
\\
\twoxtwomat{X_2}{-Y_2}{Y_2}{X_2}{\vphantom{\bigr)}}^{\oplus m}
&
\twoxtwomat{X_1}{Y_1}{-Y_1}{X_1}{\vphantom{\bigr)}}^{\oplus m}
\end{pmatrix}
\end{array}
},
\end{align*}
where we write 
$ X = X_1 + \imgunit X_2 $ and 
$ Y = Y_1 + \imgunit Y_2 $ 
with real matrices $ X_i, Y_i \; ( i = 1, 2 ) $.

Let $V = \Mat(2m, n; \CC)$, and 
$x_{s,i}$ ($1 \le s \le 2m, 1 \le i \le n$) its linear coordinates,
and put $\partial_{s,i} = \partial/\partial x_{s,i}$.
Denote the Weil representation of $\fraksp_{4mn}(\CC)$ 
on the polynomial ring $\CC[V]$ by $\omega$.
%
The action of $\omega$ for the basis element 
$ E_{s,t} \in \frakg = \frakgl_{2m}(\CC)$ is given by
(see (4.5) of \bracketcite{MR1845714}, e.g.)
\begin{align*}
  \omega(E_{s,t}) 
  &= 
\textstyle
  \sum_{i=1}^n x_{s,i} \partial_{t,i} + \frac{n}{2} \delta_{s,t}
  \qquad
  (1 \le s,t \le 2m),
\end{align*}
%
while the action of $\frakm = \frako_{2n}(\CC)$ is
\begin{align*}
  \omega\bigl(
    \begin{pmatrix} \scriptstyle E_{i,j} & 0 \\ 0 & \scriptstyle -E_{j,i} \end{pmatrix}
  \bigr)
  &=
\textstyle
  \sum\limits_{s=1}^{2m} x_{s,i} \partial_{s,j}  + m \delta_{i,j},
  \\
  \omega\bigl(
    \begin{pmatrix} 0 & \scriptstyle E_{i,j}-E_{j,i} \\ 0 & 0 \end{pmatrix}
  \bigr)
  &= \imgunit 
\textstyle
  \sum\limits_{s=1}^m
  (x_{s,i} x_{\BAR{s},j} - x_{\BAR{s},i} x_{s,j}),
  \\
  \omega\bigl(
    \begin{pmatrix} 0 & 0 \\ \scriptstyle E_{j,i}-E_{i,j} & 0 \end{pmatrix}
  \bigr)
  &=
  \imgunit 
\textstyle
  \sum\limits_{s=1}^m
  (\partial_{s,i} \partial_{\BAR{s},j} - 
    \partial_{\BAR{s},i} \partial_{s,j}),
\end{align*}
where $\BAR{s} = s+m$,
and $E_{i,j}$ or $E_{j,i}$ denotes the matrix unit in $\Mat(n ; \CC)$.
Note that we used the normalization 
$x_{s,i} \mapsto \sqrt{2} x_{s,i}$ and
$x_{\BAR{s},i} \mapsto -\sqrt{-2} x_{\BAR{s},i}$
($1 \le s\le m, 1 \le i \le n$)
to (4.5) of \bracketcite{MR1845714}.
Note also that $x_{s,i}$ of this article
corresponds to $x_{2n(s-1)+i}$ in \bracketcite{MR1845714},
and $x_{\BAR{s},i}$ of this article
corresponds to $x_{(2n+1)(s-1)+i}$ in \bracketcite{MR1845714}
for $1 \le s \le m$.
We set 
\begin{align*}
J &=  \begin{pmatrix} 0 & 1_m \\ -1_m & 0 \end{pmatrix}
\end{align*}
and define the following matrices:
\begin{align*}
X &=
(x_{s,i})_{\substack{1\le s \le 2m\\ 1\le i \le n} }
&
\partial &=
(\partial/\partial x_{s,i})_{\substack{1\le s \le 2m\\ 1\le i \le n} }
&& \in \Mat(2m,n,\PD(V)),
\\
P &= (X, J\partial) 
&
Q &= (\partial, J X) 
&& \in \Mat(2m, 2n; \PD(V)),
\\
\EE &= (E_{s,t})_{1 \le s,t \le 2m}
&&
&& \in \Mat(2m; U(\frakgl_{2m}(\CC)),
\end{align*}
and we use the notation like $\omega(\EE)$
of matrix form.   
For example, we write
\begin{align*}
\omega(\EE + J \T\EE J^{-1}) = P \T Q.
\end{align*}

\subsection{Capelli identity for Case $\HH$}

Recall that the set $\comb{m}{d}$ ($1 \le d \le m$) of strictly increasing indices.
Here we define $\bcomb{m}{d} \; (m \ge 1, \; d \ge 1) $ as the set of 
\emph{weakly} increasing indices:
\begin{align*}
  \bcomb{m}{d} = \left\{ 
    S = \{s_1, s_2, \ldots, s_d\}
    \;;\;
    \begin{array}{l}
      \text{$S$ is a multi-set with} \\
      1 \le s_1 \le s_2 \le \cdots \le s_d \le m
    \end{array}
   \right\} .
\end{align*}
For $S \in \bcomb{m}{d}$,
we define an integer $S!$ by
\begin{align*}
  S! = t_1! \, t_2! \, \cdots t_m! \, ,
\end{align*}
where $t_j$ denotes the number of $j$ occurring in the multi-set $S$ 
($ \{ t_j \} $ are \emph{not} the members of $ S $).
For an $m\times m$ matrix $A$ and $I, J \in \bcomb{m}{d}$,
we write $A_{I,J} = (A_{i_a,j_b})_{1 \le a,b \le d}$.
Note that $A_{I,J}$ is not necessarily a submatrix of $A$ in this case, 
since $ I $ and $ J $ might have duplicated indices.  

We need the permanent (column-permanent)
and the symmetrized permanent.
\begin{definition}
\label{defn:pers}
For an $n \times n$ matrix $A$,
the \emph{permanent} of $A$ is defined by
\begin{align*}
\per A &= \sum_{\sigma\in\Symm_n}
A_{\sigma(1),1} A_{\sigma(2),2} \cdots A_{\sigma(n),n}.
\end{align*}
We define the permanent with diagonal parameters $ u = ( u_1 , u_2 , \dots , u_d ) $, 
and the \emph{symmetrized permanent} by 
\begin{align*}
  \per(A_{I,J}; u)
  &= 
\textstyle
  \sum_{\sigma\in\Symm_d}
  (A_{i_{\sigma(1)},j_1} + u_1 \delta_{i_{\sigma(1)},j_1})
  \cdots
  (A_{i_{\sigma(d)},j_d} + u_d \delta_{i_{\sigma(d)},j_d})
  \\ &=
  \per( A_{I,J} + 1_{I,J}
    \begin{smallpmatrix}
      u_1     &   0       &  \cdots  &   0  \\
      0       &   u_2     &  \cdots  &   0  \\
      \vdots  &&  \ddots  &  \vdots  \\
      0       &   0       &  \cdots  &   u_d
    \end{smallpmatrix}
  )
  \qquad (I, J \in \bcomb{n}{d}),
  \\
  \Per A &= \frac{1}{n!} 
\textstyle
  \sum_{\sigma,\tau\in\Symm_n}
  A_{\sigma(1),\tau(1)} A_{\sigma(2),\tau(2)} \cdots
  A_{\sigma(n),\tau(n)}.
\end{align*}
\end{definition}
\begin{remark}
It can be shown that
\begin{align*}
\textstyle
  \sum_{S\in\bcomb{2m}{d}} \dfrac{1}{S!}
  \per( \EE + J\T\EE J^{-1} )_{S,S} 
  \in S(\frakp)^K
  \quad (d = 1, 2, \ldots)
\end{align*}
is a $K$-invariant element in $S(\frakp)$,
where $\EE$ is considered as a matrix with entries
$E_{s,t}$ in $S(\frakg)$.
Note that these elements do \textit{not} generate $S(\frakp)^K$,
but elements constructed by using Pfaffians generate $S(\frakp)^K$.  
It seems that Capelli identities corresponding to the elements
using Pfaffians are rather difficult to treat.
This phenomenon occurs in the case of 
the ordinary Capelli identities for the orthogonal Lie algebras 
(cf.~\bracketcite{MR1116239, MR2001d:17009, MR1912651}).
\end{remark}

We have two different types of the Capelli identities for Case $\HH$.
\begin{theorem}
\label{thm:caseH-easy}
For $d \ge 1$, define
$X_d \in U(\frakg)^K = U(\frakgl_{2m}(\CC))^{USp_m}$ by
\begin{align*}
&  X_d = 
\textstyle
 \sum\limits_{S\in\bcomb{2m}{d}}
  \sum\limits_{l=0}^d 
  \sum\limits_{\substack{
    S' , S\dblprime \\
    T' , T\dblprime }}
  \dfrac{S!}{S'! \, S\dblprime! \, T'! \, T\dblprime!}
  \per( (\EE - \frac{n}{2}1_{2m})_{S',T'}; \rho'_l )
  \\[-5pt] & \hspace{25ex}
  \times \per( (J\T\EE J^{-1} + \frac{n}{2}1_{2m})_{S\dblprime,T\dblprime}; \rho''_{d -l} ),
\\ &
\rho'_l = ( -(l-1), -(l-2), \ldots, 0 ) , \quad 
\rho''_{d - l} = ( d-l-1, d-l-2, \ldots, 0 )
\end{align*}
where the third summation is taken over 
$ S' , T' \in \bcomb{2m}{l} $ and 
$ S\dblprime , T\dblprime \in \bcomb{2m}{d - l} $ 
which satisfies 
$ S = S' \cup S\dblprime = T' \cup T\dblprime $.
%
Then we have the Capelli identity:
\begin{align*}
\omega(X_d) = 
\textstyle 
  \sum_{S\in\bcomb{2m}{d}}
  \sum_{J\in\bcomb{2n}{d}}
  \dfrac{1}{S! \, J!}
  \per P_{S,J} \per Q_{S,J}.
\end{align*}
\end{theorem}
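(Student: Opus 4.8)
The plan is to mirror the proof of Proposition~\ref{prop:caseC-easy-left-half} carried out in \S\ref{subsubsec:C-1-pf-left-half}, but to replace the exterior (fermionic) algebra by a \emph{symmetric} (bosonic) one, so that products of linear forms produce \emph{permanents} rather than determinants. Concretely, I would work in $\CC[e_1,\dots,e_{2m},e^\ast_1,\dots,e^\ast_{2m}]\otimes_\CC\PD(V)$ where the $e_s,e^\ast_s$ are mutually \emph{commuting} indeterminates, and set $\alpha_j=\sum_{s=1}^{2m}e_sP_{s,j}$, $\beta_j=\sum_{s=1}^{2m}e^\ast_sQ_{s,j}$ for $1\le j\le 2n$, together with $\tau=\sum_{s=1}^{2m}e_se^\ast_s$. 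Because the $e_s$ commute, monomials are now indexed by multisets and a short count gives $\alpha_{j_1}\cdots\alpha_{j_d}=\sum_{S\in\bcomb{2m}{d}}\frac{1}{S!}\,e_S\,\per P_{S,J}$ (and similarly for $\beta$), which is exactly where the weights $1/S!$ and $1/J!$ in the statement originate. Note that $X_d$ is built from $\per(\EE+J\T\EE J^{-1})$, so its $K=USp_m$-invariance should be checked first as a separate, routine point.

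Next I would record the bosonic analogues of the relations in \S\ref{subsubsec:C-1-pf-left-half}. A direct computation, using $J\T J=1_{2m}$, gives $[P_{s,i},Q_{t,j}]=\veps_{i,j}\delta_{s,t}$ with $\veps=\left(\begin{smallmatrix}-1_n & 0\\ 0 & 1_n\end{smallmatrix}\right)$. Since the $e_s$ now \emph{commute}, this yields the \emph{commutator} relations $[\alpha_i,\alpha_j]=[\beta_i,\beta_j]=0$ and $[\alpha_i,\beta_j]=\veps_{i,j}\tau$ with $\tau$ central, the bosonic mirror of Equations~\eqref{eq:comm-rel-of-alpha-alpha}--\eqref{eq:comm-rel-of-alpha-beta}. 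Splitting $P=(X,J\partial)$ into its first and last $n$ columns I set $\Xi_X=\sum_{j=1}^n\alpha_j\beta_j$ and $\Xi_\partial=\sum_{j=n+1}^{2n}\alpha_j\beta_j$, and the analogues of Equations~\eqref{eq:comm-rel-of-alpha-XiX} and \eqref{eq:comm-rel-of-alpha-XiY} hold; here the sign $\veps_{j,j}=-1$ on the $X$-block produces exactly the negative shifts $\rho'_l=(-(l-1),\dots,0)$, while $\veps_{j,j}=+1$ on the $\partial$-block produces $\rho''_{d-l}=(d-l-1,\dots,0)$. I would then prove the permanent counterpart of Lemma~\ref{lem:caseC-Xi-to-Det}, namely that expanding $\Xi_X(u_1)\cdots\Xi_X(u_d)$ and collecting by multisets yields symmetrized permanents $\Per$ with diagonal parameters, and identify the blocks via $X\T\partial=\omega(\EE)-\frac{n}{2}1_{2m}$ and $(J\partial)\T(JX)=\omega(J\T\EE J^{-1})+\frac{n}{2}1_{2m}$, consistent with the matrix identity $\omega(\EE+J\T\EE J^{-1})=P\T Q$ recorded above.

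With these in hand I would compute $\sum_{J\in\bcomb{2n}{d}}\frac{1}{J!}\alpha_J\beta_J=\frac{1}{d!}\sum_{j_1,\dots,j_d}\alpha_{j_1}\cdots\alpha_{j_d}\beta_{j_d}\cdots\beta_{j_1}$ in two ways, exactly as in the proof of Proposition~\ref{prop:caseC-easy-left-half-Det}. Reading off the coefficient of $e_Se^\ast_T$ directly gives $\sum_J\frac{1}{S!\,T!\,J!}\per P_{S,J}\per Q_{T,J}$, whereas splitting $J$ into its $X$-part and $\partial$-part and pushing the $\Xi$'s to the left produces a sum of products of symmetrized permanents carrying the parameters $\rho'_l,\rho''_{d-l}$. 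I would then convert $\Per$ back to the column permanents $\per$ appearing in $X_d$ by a permanent analogue of Lemma~\ref{lem:Det-to-det} (valid since the entries of $\EE-\frac{n}{2}1_{2m}$ and $J\T\EE J^{-1}+\frac{n}{2}1_{2m}$ obey $\frakgl_{2m}$-type relations), and finally contract via the bosonic pairing $e_Se^\ast_T\mapsto S!\,\delta_{S,T}$ and use $\tau^k=k!\sum_{U}\frac{1}{U!}e_Ue^\ast_U$. The factor $S!$ supplied by this pairing is precisely what turns the naive $\frac{1}{(S!)^2}$ into the asserted $\frac{1}{S!}$ along the diagonal $S=T$.

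The main obstacle, I expect, is the bosonic multiplicity bookkeeping. Unlike the square-free fermionic computation, repeated indices survive, so the factorials $S!,J!,S'!,S\dblprime!,T'!,T\dblprime!$ must be tracked exactly through every symmetrization and every multiset union. The delicate point is the splitting step: a multiset $S$ decomposes as $S=S'\cup S\dblprime$ in several ways, and the multinomial factor $\frac{S!}{S'!\,S\dblprime!\,T'!\,T\dblprime!}$ in the statement replaces the fermionic sign $(-1)^{l(S',S\dblprime)+l(T',T\dblprime)}$; verifying that these multiplicities assemble correctly, together with the $J$-twist in the $\per$-to-$\Per$ conversion (which mixes the two $m$-blocks), is the crux of the argument. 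As flagged in the Remark, this method reaches only the permanent-type invariants; the Pfaffian generators of $S(\frakp)^K$ lie outside its scope, which is why only the left half of the picture is obtained.
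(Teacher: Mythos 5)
The paper states Theorem~\ref{thm:caseH-easy} explicitly \emph{without proof} (see the opening of Section~\ref{sec:H}), so there is no in-paper argument to compare against; your proposal reconstructs what is evidently the intended proof, namely the bosonic mirror of the Case $\CC$ argument of \S~\ref{subsubsec:C-1-pf-left-half}. Your key structural choices all check out against the statement: with commuting $e_s, e^\ast_s$ one indeed gets $\alpha_{j_1}\cdots\alpha_{j_d} = \sum_{S\in\bcomb{2m}{d}} \frac{1}{S!} e_S \per P_{S,J}$ (column-permanent), which is the source of the $1/S!$, $1/J!$ weights; the relation $[P_{s,i},Q_{t,j}]=\veps_{i,j}\delta_{s,t}$ with $\veps=\bigl(\begin{smallmatrix}-1_n&0\\0&1_n\end{smallmatrix}\bigr)$ is correct (using $J\T J=1_{2m}$ on the second block); the bosonic commutator $\alpha_j\Xi_X=(\Xi_X-\tau)\alpha_j$ for $j\le n$ reverses the shift direction relative to the fermionic case and produces exactly $\rho'_l=(-(l-1),\ldots,0)$ on the $\EE-\frac n2 1_{2m}$ block and $\rho\dblprime_{d-l}=(d-l-1,\ldots,0)$ on the $J\T\EE J^{-1}+\frac n2 1_{2m}$ block, consistent with $X\T\partial=\omega(\EE)-\frac n2 1_{2m}$ and $(J\partial)\T(JX)=\omega(J\T\EE J^{-1})+\frac n2 1_{2m}$; the multiset-union bookkeeping yields the multinomial $\frac{S!}{S'!\,S\dblprime!\,T'!\,T\dblprime!}$ in place of the fermionic sign, and the pairing $e_S e^\ast_T\mapsto S!\,\delta_{S,T}$ gives the asserted $\frac{1}{S!\,J!}$ on the diagonal, exactly as you say.

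One genuine gap in your write-up must be repaired, and it is precisely where the symplectic structure enters. You write that the relation $[P_{s,i},Q_{t,j}]=\veps_{i,j}\delta_{s,t}$ ``yields'' $[\alpha_i,\alpha_j]=[\beta_i,\beta_j]=0$; it does not. Unlike Case $\CC$, where $P=(X,\partial^Y)$ involves disjoint variables so that all entries of $P$ commute, in Case $\HH$ the entries of $P=(X,J\partial)$ do \emph{not} commute: a direct computation gives $[P_{s,i},P_{t,n+j}]=J_{s,t}\,\delta_{i,j}$, and similarly $[Q_{s,i},Q_{t,n+j}]=J_{t,s}\,\delta_{i,j}$. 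The vanishing $[\alpha_i,\alpha_j]=\sum_{s,t}e_se_t[P_{s,i},P_{t,j}]=0$ holds only because the commutator is \emph{antisymmetric} in $(s,t)$ (via the antisymmetry of $J$) while $e_se_t$ is symmetric — a cancellation with no fermionic counterpart, which must be stated, since without it the whole permanent formalism ($\alpha_J$ well defined up to order, the factorization of $\sum_J \frac{1}{J!}\alpha_J\beta_J$ into block products, and the $\Xi$-expansions) collapses. A second, smaller point: the permanent analogue of Lemma~\ref{lem:Det-to-det} converting $\Per$ with diagonal parameters into the column-permanents appearing in $X_d$ is invoked but not proved; the paper itself omits the proof of Lemma~\ref{lem:Det-to-det}, so this is a tolerable omission, but you should verify that the parameter direction flips in the permanent setting (increasing parameters on the $\frakgl$-type block, decreasing on the transposed block), which is what makes your derived tuples land on the $\rho'_l$ and $\rho\dblprime_{d-l}$ of the statement; note also that conjugation by $J$ is an algebra automorphism, so $J\T\EE J^{-1}$ satisfies the same relations as $\T\EE$ and the conversion applies verbatim. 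With these two points supplied, your argument is complete.
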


\begin{theorem}
\label{thm:caseH-not-easy}
For $d \ge 1$, define
$X_d \in U(\frakg)^K = U(\frakgl_{2m}(\CC))^{USp_m}$ by
\begin{align*}
  X_d = 
\textstyle 
  \sum_{S \in \bcomb{2m}{d}}
  \dfrac{1}{S!} \Per(\EE + J \T\EE J^{-1})_{S,S} .
\end{align*}
Then we have the Capelli identity:
\begin{align*}
\textstyle 
\omega(X_d) = 
\sum\limits_{l=0}^d \dfrac{(2m+d-1)!}{d!(2m+l-1)}
\sum\limits_{J\in\bcomb{2n}{l}} \dfrac{c^d_{\beta,\alpha}}{J!}
\sum\limits_{S\in\bcomb{2m}{l}} \dfrac{1}{S!}
\per P_{S,J} \per Q_{S,J},
\end{align*}
where $c^d_{\beta,\alpha}$ is an integer defined in 
Equation~\eqref{eq:def-of-c^d_J} 
with both $p$ and $q$ replaced by $n$,
and $\alpha$ and $\beta$ are determined from $J$
in the same way as in Equation~\eqref{eq:def-of-c^d_J-alpha-beta}.
\end{theorem}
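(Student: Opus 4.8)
The plan is to mirror the proof of Proposition~\ref{prop:caseC-not-easy-left-half}, replacing the fermionic auxiliary algebra $\bigwedge(\CC^m\oplus(\CC^m)^\ast)$ used in \S\ref{subsubsec:C-1-pf-left-half} by the \emph{bosonic} one, namely the symmetric (polynomial) algebra $\Sym(\CC^{2m}\oplus(\CC^{2m})^\ast)$ on commuting symbols $e_s,e^\ast_s$ ($1\le s\le 2m$). Because the $e_s$ now commute, products of the linear forms collapse to \emph{permanents} instead of determinants, and repeated indices produce exactly the multiset weights $1/S!$ of Section~\ref{sec:H}; this is the boson--fermion dual of Case $\CC$. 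Concretely, using the Case $\HH$ matrices $P=(X,J\partial)$ and $Q=(\partial,JX)$, I set $\alpha_j=\sum_s e_s P_{s,j}$, $\beta_j=\sum_s e^\ast_s Q_{s,j}$ ($1\le j\le 2n$) and $\tau=\sum_s e_s e^\ast_s$, so that $\alpha_{j_1}\cdots\alpha_{j_d}=\sum_{S\in\bcomb{2m}{d}}\frac{1}{S!}\,e_S\,\per P_{S,J}$ and similarly for the $\beta$'s, where $e_S$ is the commutative monomial attached to the multiset $S$.

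The first step is to record the commutation relations. Since the $e_s,e^\ast_t$ all commute, $\tau$ is again central in $\Sym(\CC^{2m}\oplus(\CC^{2m})^\ast)\otimes\PD(V)$, and every \emph{anti}-commutator of the fermionic computation becomes a \emph{commutator}. The sole input from $\PD(V)$ is the analogue of Equation~(\ref{eq:comm-rel-of-P-Q}); a direct computation from the $J$-twisted definitions, using $J\,\T J=1_{2m}$, gives $[P_{s,i},Q_{t,j}]=\veps_{i,j}\delta_{s,t}$ with $\veps=-I_{n,n}$, i.e.\ exactly the $p=q=n$ specialization of Case $\CC$. Hence $\talpha_j=\veps_{j,j}\alpha_j$ and $\tbeta_j=\veps_{j,j}\beta_j$ are well defined, and with $\Xi=\sum_{j=1}^{2n}\alpha_j\beta_j$, $\tXi=\sum_j\talpha_j\beta_j$ one obtains $[\alpha_i,\beta_j]=\veps_{i,j}\tau$ together with the exact analogues of Equations~(\ref{eq:comm-rel-of-Xi-tXi}) and (\ref{eq:comm-rel-of-Xi-beta}).

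With these relations the algebraic skeleton transfers verbatim, because the derivations in Lemma~\ref{lem:gamma-recurrence} use only the bracket identities just listed. Defining $\gamma(u,v)$ as there, the recurrence $\gamma(u,v)\Xi=\gamma(u+1,v)+u\tau\gamma(u-1,v+1)+v\tau\gamma(u+1,v-1)$ holds, so $\Xi^d=\sum_{u+v\le d}b^d_{u,v}\gamma(u,v)\tau^{d-u-v}$ with the \emph{same} integers $b^d_{u,v}$ of Definition~\ref{defn:b-d-uv}. I would then prove the permanent analogue of Lemma~\ref{lem:gamma-to-det}, namely $\gamma(u,v)=\sum_{J\in\bcomb{2n}{u+v}}\veps(J;u,v)\sum_{S,T\in\bcomb{2m}{u+v}}\frac{1}{S!\,T!}\,e_S e^\ast_T\,\per P_{S,J}\per Q_{T,J}$, which now carries no sign (the symbols commute) but does carry the multiset weights.

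Finally I would evaluate $\Xi^d$ in two ways. On one side $\Xi^d=d!\sum_{S,T\in\bcomb{2m}{d}}\frac{1}{S!\,T!}\,e_S e^\ast_T\,\Per(P\T Q)_{S,T}$, which recovers $d!\,\omega(X_d)$ through $\omega(\EE+J\T\EE J^{-1})=P\T Q$ after the contraction $e_S e^\ast_T\mapsto S!\,\delta_{S,T}$ dual to the monomial pairing. On the other side, substituting the $\gamma$-expansion together with $\tau^{d-l}=(d-l)!\sum_{U\in\bcomb{2m}{d-l}}\frac{1}{U!}e_U e^\ast_U$ and applying the same contraction selects $S=T$ and sums the multisets $U$; the identity $\sum_{u+v=l}b^d_{u,v}\veps(J;u,v)=c^d_{\beta,\alpha}$ of Appendix~\ref{sec:c^d_J} (with $p=q=n$) then produces the coefficients in the statement. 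I expect the genuine obstacle to lie entirely in this last step: the fermionic contraction counted subsets, whereas here one must count the multisets $U$ and track the weights $1/S!,1/J!$ through the pairing so that the combinatorial prefactor $(2m+d-1)!/(d!\,(2m+l-1))$ emerges exactly; the remainder is a formal transcription of Case $\CC$.
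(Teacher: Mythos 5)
Since the paper states Theorem~\ref{thm:caseH-not-easy} explicitly \emph{without proof}, there is no in-paper argument to compare against; judged on its own merits, your boson--fermion transcription of the Case $\CC$ proof is the right strategy, but it contains a genuine sign error at precisely the point where Case $\HH$ deviates from Case $\CC$ --- and that point is what produces the swapped coefficient $c^d_{\beta,\alpha}$ in the statement. You assert that the ``exact analogues'' of Equation~(\ref{eq:comm-rel-of-Xi-beta}) hold and that $\Xi^d=\sum b^d_{u,v}\gamma(u,v)\tau^{d-u-v}$ with ``the \emph{same} integers $b^d_{u,v}$''. This is false. With commuting $e_s,e^\ast_s$ one indeed gets $[\alpha_i,\beta_j]=\veps_{i,j}\tau$, but then $[\Xi,\alpha_i]=-\talpha_i\tau$ and $[\Xi,\beta_i]=+\tbeta_i\tau$, the \emph{opposite} signs to the fermionic relations (\ref{eq:comm-rel-of-Xi-beta}). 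Consequently the recurrence of Lemma~\ref{lem:gamma-recurrence}~(1) becomes $\gamma(u,v)\Xi=\gamma(u+1,v)-u\tau\gamma(u-1,v+1)-v\tau\gamma(u+1,v-1)$, and the expansion coefficients are not $b^d_{u,v}$ but $\tilde b^d_{u,v}=(-1)^v b^d_{u,v}$ (already at $d=2$: bosonically $\Xi^2=\gamma(2,0)-\tau\gamma(0,1)$, versus $+$ in the fermionic case). This is not a harmless discrepancy: by Lemma~\ref{lem:simplify-e(J;u,v)} one has $(-1)^v\veps(\alpha,\beta;u,v)=\veps(\beta,\alpha;u,v)$ (since $(-1)^{a+v}=(-1)^b$ when $a+b=v$), so Lemma~\ref{lem:(19.4)} yields $\sum_{u+v=l}\tilde b^d_{u,v}\,\veps(\alpha,\beta;u,v)=c^d_{\beta,\alpha}$, which is exactly the swapped coefficient in the theorem. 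Your sketch, taken literally, would instead produce $c^d_{\alpha,\beta}$ --- a wrong statement, since e.g.\ $c^2_{1,0}=-1\neq 1=c^2_{0,1}$ --- and your invocation of Appendix~\ref{sec:c^d_J} as delivering $c^d_{\beta,\alpha}$ is inconsistent with the relations you claimed.

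Two further points need repair, one substantive and one computational. First, you say the ``sole input from $\PD(V)$'' is the analogue of Equation~(\ref{eq:comm-rel-of-P-Q}); but unlike Case $\CC$, the entries of $P=(X,J\partial)$ do \emph{not} commute with each other: $[x_{s,i},(J\partial)_{t,i}]=-J_{t,s}\neq 0$. Hence $[\alpha_i,\alpha_j]=0$ (and $[\beta_i,\beta_j]=0$) is not automatic; it holds only because the antisymmetric factor $J_{t,s}$ contracts to zero against the symmetric monomial $e_se_t$ --- this cancellation is exactly why the bosonic auxiliary algebra matches the $J$-twisted matrices, and it must be verified, not assumed. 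Second, the step you defer does close, and in a way worth recording: with the contraction $e_Se^\ast_T\mapsto S!\,\delta_{S,T}$ and $\tau^{d-l}=(d-l)!\sum_{U\in\bcomb{2m}{d-l}}\frac{1}{U!}e_Ue^\ast_U$, the multiset $U$ may now meet $S$, and the relevant sum is $\sum_{U\in\bcomb{2m}{d-l}}\frac{(S\cup U)!}{S!\,U!}=\binom{2m+d-1}{d-l}$, independent of $S$ by the negative binomial series $\sum_{r\ge 0}\binom{t+r}{r}z^r=(1-z)^{-(t+1)}$. This gives the prefactor $(2m+d-1)!/\bigl(d!\,(2m+l-1)!\bigr)$, revealing incidentally that the printed $(2m+l-1)$ in the theorem is a typo for $(2m+l-1)!$. (Minor: your displayed formula for $\gamma(u,v)$ omits the weight $1/J!$, though you track it afterwards.) With the sign bookkeeping $\tilde b^d_{u,v}=(-1)^vb^d_{u,v}$, the commutativity cancellation, and this contraction count supplied, your outline does become a correct proof.
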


\section*{Acknowledgment}
\label{Acknowledgiment}
We thank Yasuhide Numata
for proving Lemmas~\ref{lem:(19.2)} and \ref{lem:(19.4)} below.
We also thank Hiroshi Oda
for fruitful discussions on the identities. 
The first author is partially supported by JSPS Grant-in-Aid for Scientific Research (B) \#{17340037}.

\appendix
\section{Formula for $c^d_J$}
\label{sec:c^d_J}

In this section we prove (Lemma~\ref{lem:(19.4)})
\begin{align*}
c^d_{\alpha,\beta}
=
\sum_{u+v=\alpha+\beta} b^d_{u,v} \, \veps(\alpha,\beta;u,v),
\end{align*}
where
$c^d_{\alpha,\beta}$,
$b^d_{u,v}$ 
and $\veps(\alpha,\beta;u,v)$
are defined in
Equation~(\ref{eq:def-of-c^d_J}),
Definition~\ref{defn:b-d-uv}
and Lemma~\ref{lem:gamma-to-det},
respectively.
We start with a simplification of $\veps(\alpha,\beta;u,v)$.
\begin{lemma}
\label{lem:simplify-e(J;u,v)}
For non-negative integers $p,q,u,v$ and $J \in \comb{p+q}{u+v}$, 
the integer 
$\veps(\alpha, \beta; u, v)$ is defined in 
Lemma~\ref{lem:gamma-to-det}:
\begin{align*}
  \veps(J; u, v) =
  \veps(\alpha, \beta; u, v) &= 
\textstyle 
  \sum_{\sigma\in\Symm_{u+v}}
  \veps_{j_{\sigma(u+1)},j_{\sigma(u+1)}} \cdots
  \veps_{j_{\sigma(u+v)},j_{\sigma(u+v)}},
\end{align*}
where $\alpha$ and $\beta$ are defined by
Equation~\eqref{eq:def-of-c^d_J-alpha-beta}.
We have 
\begin{align*}
 \veps(\alpha, \beta; u, v) =
\textstyle 
  u! \, v! \sum_{\substack{a,b\in\ZZ\\a+b=v}} (-1)^a
\displaystyle
  \binom{\alpha}{a} \binom{\beta}{b}.
\end{align*}
\end{lemma}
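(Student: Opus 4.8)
The plan is to reorganize the defining sum over $\Symm_{u+v}$ combinatorially, grouping permutations according to the \emph{set} of positions they send into the last $v$ slots. First I would observe that in each summand the product $\veps_{j_{\sigma(u+1)},j_{\sigma(u+1)}}\cdots\veps_{j_{\sigma(u+v)},j_{\sigma(u+v)}}$ depends only on the subset $S=\{\sigma(u+1),\ldots,\sigma(u+v)\}\subseteq\{1,\ldots,u+v\}$ of size $v$, and not on the order in which these positions are listed, nor on the values $\sigma(1),\ldots,\sigma(u)$, which do not occur in the product at all. Consequently the sum factors: for a fixed $v$-subset $S$, the number of $\sigma\in\Symm_{u+v}$ with $\{\sigma(u+1),\ldots,\sigma(u+v)\}=S$ is exactly $u!\,v!$, namely $v!$ orderings of the last $v$ slots times $u!$ orderings of the first $u$ slots. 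This yields
\begin{align*}
\veps(\alpha,\beta;u,v) = u!\,v! \sum_{\substack{S\subseteq\{1,\ldots,u+v\}\\ |S|=v}} \prod_{k\in S}\veps_{j_k,j_k}.
\end{align*}

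Next I would evaluate the product. Since $\veps=-I_{p,q}$, the entry $\veps_{j_k,j_k}$ equals $-1$ when $1\le j_k\le p$ and $+1$ when $p+1\le j_k\le p+q$, so $\prod_{k\in S}\veps_{j_k,j_k}=(-1)^a$, where $a$ is the number of $k\in S$ whose index $j_k$ lies in the first block. By the definitions in Equation~\eqref{eq:def-of-c^d_J-alpha-beta}, exactly $\alpha$ of the positions $1,\ldots,u+v$ carry a first-block index and $\beta$ carry a second-block index. Classifying the $v$-subsets $S$ by the pair $(a,b)$ recording how many first-block and second-block positions they contain, with $a+b=v$, there are $\binom{\alpha}{a}\binom{\beta}{b}$ such subsets, each contributing $(-1)^a$. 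Summing gives
\begin{align*}
\sum_{\substack{S\subseteq\{1,\ldots,u+v\}\\ |S|=v}} \prod_{k\in S}\veps_{j_k,j_k} = \sum_{\substack{a,b\in\ZZ\\ a+b=v}} (-1)^a \binom{\alpha}{a}\binom{\beta}{b},
\end{align*}
and multiplying through by $u!\,v!$ produces the claimed identity.

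The argument is purely combinatorial bookkeeping, so I do not anticipate a serious obstacle. The only point demanding care is the counting step: one must correctly verify that fixing the target set $S$ leaves precisely $u!\,v!$ admissible permutations, and in particular track that the first $u$ slots contribute the factor $u!$ even though their values never enter the product. With that count in hand, the passage to the binomial sum over $(a,b)$ is immediate from the two-block structure encoded by $\alpha$ and $\beta$.
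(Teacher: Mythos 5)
Your proof is correct and takes essentially the same route as the paper's: both group the $\sigma\in\Symm_{u+v}$ by which positions land in the last $v$ slots (equivalently by the pair $(a,b)$), count $u!\,v!$ orderings per choice and $\binom{\alpha}{a}\binom{\beta}{b}$ choices per pair, and evaluate each summand as $(-1)^a$ using $\veps=-I_{p,q}$. The counting step you flag as delicate is handled identically in the paper, so there is nothing to add.
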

\begin{proof}
For a summand 
$\veps_{j_{\sigma(u+1)},j_{\sigma(u+1)}} \cdots
\veps_{j_{\sigma(u+v)},j_{\sigma(u+v)}}$
of $\veps(\alpha,\beta;u,v)$,
let $a$ be the number of $j_{\sigma(u+i)}$ ($1 \le i \le v$)
which is less than or equal to $p$.
Similarly let $b$ be the number of $j_{\sigma(u+i)}$ ($1 \le i \le v$)
which is greater than or equal to $p+1$.
In particular $a+b = v$, 
and the value of the summand is equal to $(-1)^a$.

We count the number of $\sigma$'s in $\Symm_{u+v}$
such that they give the same $a$ and $b$.
There are $\binom{\alpha}{a} \binom{\beta}{b}$ choices of subset 
$\{ \sigma(u+1), \sigma(u+2), \ldots, \sigma(u+v) \}$
of $\{ 1,2,\ldots,u+v \}$.
By considering the order of this subset and 
the the order of the complement
$\{ \sigma(1), \sigma(2), \ldots, \sigma(u) \}$,
the number is multiplied by $u!v!$.
Thus the number of such $\sigma$ is equal to
$\binom{\alpha}{a} \binom{\beta}{b} u! \, v!$.
\end{proof}

Next we show 
a recurrence formula for $b^d_{u,v}$ in Lemma~\ref{lem:(12.2)}
and two closed formulas in Lemmas~\ref{lem:(12.12)=(19.1)} and \ref{lem:(19.2)}.
\begin{lemma}
\label{lem:(12.2)}
We have a recurrence formula for $b^d_{u,v}$:
\begin{equation*}
b^0_{0,0} = 1 , \quad 
b^{d+1}_{u,v} =
b^{d}_{u-1,v} +
(u+1) b^{d}_{u+1,v-1} +
(v+1) b^{d}_{u-1,v+1} .
\end{equation*}
\end{lemma}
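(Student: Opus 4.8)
The plan is to obtain the recurrence directly from the multiplicative relation $\Xi^{d+1} = \Xi^d \cdot \Xi$, feeding in the single-step identity of Lemma~\ref{lem:gamma-recurrence}~(1). The base case is immediate: $\Xi^0 = 1 = \gamma(0,0)\,\tau^0$, so $b^0_{0,0} = 1$ and all other $b^0_{u,v}$ vanish, which is the stated initial condition.

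For the inductive step I would start from the defining expansion $\Xi^d = \sum_{0 \le u+v \le d} b^d_{u,v}\, \gamma(u,v)\, \tau^{d-u-v}$ (Definition~\ref{defn:b-d-uv}) and multiply on the right by $\Xi$. Since $\tau$ is central (Lemma~\ref{lem:gamma-recurrence}~(3)), each $\tau^{d-u-v}$ passes freely across $\Xi$, so the only thing to rewrite is each product $\gamma(u,v)\,\Xi$. Applying Lemma~\ref{lem:gamma-recurrence}~(1), namely $\gamma(u,v)\Xi = \gamma(u+1,v) + u\tau\gamma(u-1,v+1) + v\tau\gamma(u+1,v-1)$, yields
\[
\Xi^{d+1} = \sum_{u,v} b^d_{u,v}\bigl( \gamma(u+1,v)\,\tau^{d-u-v} + u\,\gamma(u-1,v+1)\,\tau^{d-u-v+1} + v\,\gamma(u+1,v-1)\,\tau^{d-u-v+1} \bigr).
\]

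The remaining work is bookkeeping: I would re-index each of the three families so that it is displayed in terms of $\gamma(u,v)\,\tau^{(d+1)-u-v}$. Setting the new index pair to $(u,v)$ forces the old pair $(u-1,v)$ in the first family, $(u+1,v-1)$ in the second, and $(u-1,v+1)$ in the third; in every case the $\tau$-exponent collapses to $(d+1)-u-v$, and this matching of exponents is the consistency check that must be verified. Reading off the coefficient of $\gamma(u,v)\,\tau^{(d+1)-u-v}$ then gives $b^{d+1}_{u,v} = b^d_{u-1,v} + (u+1)\,b^d_{u+1,v-1} + (v+1)\,b^d_{u-1,v+1}$, as claimed, the factors $u+1$ and $v+1$ arising from the coefficients $u$ and $v$ attached to the shifted pairs. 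The conventions $b^d_{u,v} = 0$ for $u<0$ or $v<0$ (Definition~\ref{defn:b-d-uv}) absorb the boundary contributions automatically.

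The only genuine subtlety to flag—and the point I would be careful about rather than the main difficulty—is that this computation determines the \emph{specific} integers produced by the inductive construction behind Definition~\ref{defn:b-d-uv}, and does not rely on any linear independence (hence uniqueness) of the family $\{\gamma(u,v)\,\tau^{d-u-v}\}$. Because that definition fixes $b^{d+1}_{u,v}$ precisely by expanding $\Xi^{d}\cdot\Xi$ through Lemma~\ref{lem:gamma-recurrence}~(1), the displayed expansion \emph{is} the recurrence defining those integers, so the argument is complete once the index shifts and $\tau$-exponents are checked.
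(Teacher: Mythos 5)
Your proposal is correct and follows essentially the same route as the paper's proof: multiply the defining expansion $\Xi^d = \sum b^d_{u,v}\,\gamma(u,v)\,\tau^{d-u-v}$ on the right by $\Xi$, apply Lemma~\ref{lem:gamma-recurrence}~(1) to each $\gamma(u,v)\Xi$, and read off the coefficient of $\gamma(u,v)\,\tau^{(d+1)-u-v}$ after re-indexing, with your index shifts and $\tau$-exponents all checking out. Your closing remark---that the $b^d_{u,v}$ are pinned down by the inductive construction behind Definition~\ref{defn:b-d-uv} rather than by any linear-independence claim for the family $\{\gamma(u,v)\,\tau^{d-u-v}\}$---is a point the paper leaves implicit, and you handle it correctly.
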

\begin{proof}
By the definition of $b^d_{u,v}$, we have
\begin{align*}
\Xi^{d+1} 
&= 
\sum_{0 \le u+v \le d} 
b^d_{u,v} \gamma(u,v) \tau^{d-u-v} \Xi.
\end{align*}
It follows from Lemma~\ref{lem:gamma-recurrence}~(1) that
the expression above is equal to
\begin{align*}
\sum_{0 \le u+v \le d} 
b^d_{u,v} \tau^{d-u-v} 
\left(
\gamma(u+1,v) + u \tau \gamma(u-1,v+1) + v \tau \gamma(u+1,v-1)
\right).
\end{align*}
Hence we have
$
b^{d+1}_{u,v} 
=
b^{d}_{u-1,v} + (u+1) b^{d}_{u+1,v-1} + (v+1) b^{d}_{u-1,v+1}.
$
\end{proof}
\begin{lemma}
\label{lem:(12.9)}
We define the generating function $f_d$ of $b^d_{u,v}$ by 
\begin{math}
f_d(x,y) = \sum_{u,v\in\ZZ} b^d_{u,v} x^u y^v.
\end{math}
Then it satisfies 
\begin{math}
f_{d+1}(x,y) = 
x f_d 
+ y\frac{\partial}{\partial x} f_d 
+ x \frac{\partial}{\partial y} f_d.
\end{math}
\end{lemma}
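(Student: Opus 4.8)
The plan is to translate the three-term recurrence for $b^d_{u,v}$ proved in Lemma~\ref{lem:(12.2)} directly into a statement about the generating function $f_d(x,y)=\sum_{u,v\in\ZZ} b^d_{u,v}\,x^u y^v$. First I would substitute the recurrence $b^{d+1}_{u,v}=b^{d}_{u-1,v}+(u+1)b^{d}_{u+1,v-1}+(v+1)b^{d}_{u-1,v+1}$ into the definition of $f_{d+1}$, splitting the double sum $\sum_{u,v} b^{d+1}_{u,v}\, x^u y^v$ into three pieces corresponding to the three terms on the right-hand side. Since only finitely many $b^d_{u,v}$ are nonzero, all manipulations are legitimate term by term.

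Next I would reindex each piece so that the subscript of $b^d$ becomes a free index pair $(u',v')$. For the first piece, setting $u'=u-1$ gives $\sum_{u,v} b^{d}_{u-1,v}\, x^u y^v = x\sum_{u',v} b^{d}_{u',v}\, x^{u'} y^v = x f_d$. For the second piece, the prefactor $u+1$ together with the shift is recognized as the effect of $y\,\partial/\partial x$: one checks $y\,\frac{\partial}{\partial x}f_d=\sum_{u',v'}(u'+1)b^{d}_{u'+1,v'-1}\,x^{u'}y^{v'}$ by putting $u'=u-1$, $v'=v+1$, which is exactly the second sum. Symmetrically, the third piece equals $x\,\frac{\partial}{\partial y}f_d=\sum_{u',v'}(v'+1)b^{d}_{u'-1,v'+1}\,x^{u'}y^{v'}$. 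Adding the three identities yields $f_{d+1}=x f_d+y\,\frac{\partial}{\partial x}f_d+x\,\frac{\partial}{\partial y}f_d$, as claimed.

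The argument is essentially bookkeeping, so there is no serious obstacle; the only point demanding care is matching the integer prefactors $(u+1)$ and $(v+1)$ in the recurrence with the correct first-order operator after reindexing, so that every monomial is reproduced with the right coefficient. The convention from Definition~\ref{defn:b-d-uv} that $b^d_{u,v}=0$ whenever $u$ or $v$ is negative ensures the index shifts create no boundary terms, so the three reindexings go through cleanly.
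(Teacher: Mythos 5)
Your proposal is correct and takes essentially the same route as the paper: the paper's proof likewise multiplies the recurrence of Lemma~\ref{lem:(12.2)} by $x^u y^v$, sums over $u,v\in\ZZ$, and identifies the three shifted sums with $x f_d$, $y\frac{\partial}{\partial x}f_d$ and $x\frac{\partial}{\partial y}f_d$. Your explicit reindexing checks (with the prefactors $(u+1)$ and $(v+1)$) and the remark that vanishing of $b^d_{u,v}$ for negative indices prevents boundary terms are exactly the bookkeeping the paper leaves implicit.
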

\begin{proof}
By multiplying $x^u y^v$ to the formula of Lemma~\ref{lem:(12.2)},
and take a sum over $u,v\in\ZZ$,
and we have
\begin{align*}
f_{d+1} &=
\textstyle
\sum_{u,v\in\ZZ} b^d_{u-1,v}  x^u y^v
+ \sum_{u,v\in\ZZ} (u+1) b^d_{u+1,v-1}  x^u y^v \\
& \hspace*{.4\textwidth}
\textstyle
+ \sum_{u,v\in\ZZ} (v+1) b^d_{u-1,v+1}  x^u y^v
\\[-5pt] 
&=
x f_d 
+ y\frac{\partial}{\partial x} f_d 
+ x \frac{\partial}{\partial y} f_d.
\end{align*}
\end{proof}
\begin{lemma}
\label{lem:(12.12)=(19.1)}
We have a closed formula for $b^d_{u,v}$:
\begin{align*}
  b^d_{u,v} &=
  \sum_{m \ge 0, \mu,\nu\in\ZZ}
  \frac{(u-v+m+2\mu-2\nu)^d (-1)^{m+\nu}}{2^{u+v-m}u!v!}
  \binom{u}{\nu} \binom{v}{m} \binom{v-m}{\mu} .
\end{align*}
\end{lemma}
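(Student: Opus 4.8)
The plan is to solve the recurrence for $b^d_{u,v}$ at the level of the generating function $f_d(x,y)=\sum_{u,v}b^d_{u,v}x^uy^v$ introduced in Lemma~\ref{lem:(12.9)}. That lemma gives $f_{d+1}=Lf_d$ with $L = x + y\,\partial_x + x\,\partial_y$, and Lemma~\ref{lem:(12.2)} gives the initial datum $b^0_{0,0}=1$, i.e. $f_0=1$. Hence $f_d = L^d(1)$, and the whole task reduces to computing $L^d(1)$ in closed form and reading off the coefficient of $x^uy^v$.

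First I would remove the multiplication term from $L$ by a conjugation. A direct computation (using $e^{y}\partial_y e^{-y}=\partial_y-1$) shows $e^{y}Le^{-y}=A$, where $A = y\,\partial_x + x\,\partial_y$, so that $L = e^{-y}A\,e^{y}$ and therefore $f_d = e^{-y}A^{d}(e^{y})$. The advantage is that $A$ is a constant vector field, hence a derivation of $\CC[[x,y]]$, with $Ax=y$ and $Ay=x$. Its flow $e^{\theta A}$ is thus an algebra automorphism, and from $A^2x=x$, $A^2y=y$ one gets $e^{\theta A}x = x\cosh\theta + y\sinh\theta$ and $e^{\theta A}y = x\sinh\theta + y\cosh\theta$. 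Applying the automorphism termwise to the exponential series yields $e^{\theta A}(e^{y})=\exp(x\sinh\theta + y\cosh\theta)$, and since $\left.\frac{d^d}{d\theta^d}\right|_{\theta=0}e^{\theta A}=A^d$, differentiating $d$ times and pulling the $\theta$-independent factor $e^{-y}$ inside the derivative gives the closed form
\[
f_d \;=\; \left.\frac{d^{d}}{d\theta^{d}}\right|_{\theta=0}\exp\!\bigl(x\sinh\theta + y(\cosh\theta-1)\bigr).
\]

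It then remains to extract the coefficient of $x^uy^v$. Since $[x^uy^v]\exp(xP+yQ)=P^uQ^v/(u!\,v!)$, one obtains $b^d_{u,v} = \frac{1}{u!\,v!}\left.\frac{d^{d}}{d\theta^{d}}\right|_{\theta=0}(\sinh\theta)^{u}(\cosh\theta-1)^{v}$. Writing $\sinh\theta=\tfrac12(e^{\theta}-e^{-\theta})$ and $\cosh\theta-1=\tfrac12(e^{\theta}+e^{-\theta}-2)$, I would expand $(\sinh\theta)^u$ by the binomial theorem, producing $\binom{u}{\nu}(-1)^{\nu}$ and the exponential $e^{(u-2\nu)\theta}$, and expand $(\cosh\theta-1)^v$ by the trinomial theorem, producing $\binom{v}{m}\binom{v-m}{\mu}(-1)^{m}2^{-(v-m)}$ and $e^{(m+2\mu-v)\theta}$, where $m$ is the exponent of the constant $-2$ and $\mu$ is taken to be the exponent of $e^{\theta}$ among the remaining $v-m$ factors. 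Multiplying the two expansions and using $\left.\frac{d^{d}}{d\theta^{d}}\right|_{\theta=0}e^{\lambda\theta}=\lambda^{d}$ turns each exponential into $(u-v+m+2\mu-2\nu)^{d}$ and combines the powers of $2$ into $2^{-(u+v-m)}$, which is exactly the asserted formula.

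The only genuine idea is the conjugation together with the explicit flow of the derivation $A$ in the second paragraph; once $f_d$ is in exponential closed form, the third paragraph is routine. The one point requiring care is the labelling of the two summation indices coming from $(\cosh\theta-1)^v$: choosing $\mu$ to be the power of $e^{\theta}$ (rather than of $e^{-\theta}$) produces the exponent $u-v+m+2\mu-2\nu$ as stated, and $\binom{v-m}{\mu}$ is symmetric under the alternative choice $\mu\mapsto (v-m)-\mu$, so no ambiguity survives.
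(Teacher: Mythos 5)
Your proof is correct, and after the shared opening move it takes a genuinely different route from the paper's. Both arguments begin the same way: the paper sets $\varphi_d = e^y f_d$ and derives $\varphi_{d+1} = (y\partial_x + x\partial_y)\varphi_d$, which is precisely your conjugation $e^{y} L e^{-y} = A$ (and your input $f_0=1$ is justified, since $\Xi^0=1=\gamma(0,0)$ forces $b^0_{u,v}=\delta_{u,0}\delta_{v,0}$). From there the paper diagonalizes $A$ by the linear substitution $x=a-b$, $y=a+b$, turning $A$ into $\theta_a-\theta_b$ with monomial eigenfunctions $a^k b^l$ of eigenvalue $k-l$; it expands $e^{a+b}$ in this eigenbasis, substitutes back $a=(x+y)/2$, $b=(y-x)/2$, and extracts the coefficient of $x^u y^v$ through a triple binomial expansion with the reindexing $u=k-\mu+\nu$, $v=m+\mu+l-\nu$ — the fiddliest step of that proof. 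You instead exponentiate the vector field: the flow $e^{\theta A}$ is the hyperbolic rotation $x\mapsto x\cosh\theta+y\sinh\theta$, $y\mapsto x\sinh\theta+y\cosh\theta$, which resums the paper's eigenfunction expansion into the closed exponential generating function $\sum_{d\ge 0} f_d\,\theta^d/d! = \exp\bigl(x\sinh\theta+y(\cosh\theta-1)\bigr)$; in $(a,b)$-coordinates this is $e^{-y}\exp(ae^{\theta}+be^{-\theta})$, so the two computations are the same diagonalization presented in dual forms. What your version buys: no change of variables, no reindexing of a multiple sum (the coefficient of $x^uy^v$ is read off in one stroke, leaving only the routine expansion of $(\sinh\theta)^u(\cosh\theta-1)^v$), and a closed-form generating function the paper never writes down; moreover your device $\left.\frac{d^d}{d\theta^d}\right|_{\theta=0} e^{\lambda\theta}=\lambda^d$ is exactly the additive avatar, via $T=e^{\theta}$, of the Euler-operator trick $\theta^d T^{\lambda}\bigm|_{T=1}=\lambda^d$ that the paper uses later in Lemma~\ref{lem:(19.2)}, so your method harmonizes nicely with the rest of the appendix. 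What the paper's version buys: it needs nothing beyond termwise differentiation of power series, whereas you must justify the formal flow computation — which you do correctly, since $A$ preserves total degree, so $e^{\theta A}$ is defined on each homogeneous component and acts as an algebra automorphism, making $e^{\theta A}(e^y)=e^{e^{\theta A}y}$ legitimate degree by degree. Your closing remark about the labelling of the trinomial indices is harmless but slightly loose: the two conventions are exchanged by the relabelling $\mu\mapsto v-m-\mu$, under which $\binom{v-m}{\mu}$ is invariant, so the total sums agree even though individual summands do not correspond term by term; the convention you chose is the one matching the stated exponent $u-v+m+2\mu-2\nu$.
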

\begin{proof}
Set $\varphi_d = e^y f_d$.
Since we have an identity 
\begin{math}
\frac{\partial}{\partial y} e^{-y} = 
e^{-y} \frac{\partial}{\partial y} - e^{-y} 
\end{math}
as differential operators,
it follows from Lemma~\ref{lem:(12.9)} that
\begin{align*}
e^{-y} \varphi_{d+1} 
&= 
e^{-y} x \varphi_d
+ e^{-y} y \frac{\partial}{\partial x} \varphi_d
+ x \left( e^{-y} \frac{\partial}{\partial y} - e^{-y} \right)
\varphi_d,
\end{align*}
and hence
\begin{align}
\label{eqn:euler.operator}
\varphi_{d+1} =
y \frac{\partial}{\partial x} \varphi_d +
x \frac{\partial}{\partial y} \varphi_d.
\end{align}
Put $x=a-b$, $y=a+b$.
Then we have
\begin{align*}
y \frac{\partial}{\partial x} + x \frac{\partial}{\partial y}
=
a \frac{\partial}{\partial a} - b \frac{\partial}{\partial b}
=
\theta_a - \theta_b , 
\quad
\text{where $ \theta_a = a \frac{\partial}{\partial a} $. }
\end{align*}
If we make the change of variables and put $\psi_d(a,b) = \varphi_d(a-b,a+b)$, 
Equation 
\eqref{eqn:euler.operator} 
gives
\begin{align*}
\psi_{d} = (\theta_a - \theta_b) \psi_{d - 1}
= (\theta_a - \theta_b)^d \psi_0
= (\theta_a - \theta_b)^d e^{a+b} 
\quad (\because \psi_0 = e^{a+b}).
\end{align*}
Since $(\theta_a - \theta_b)(a^k b^l) = (k-l)a^k b^l$,
we have
\begin{align*}
\psi_d 
&= 
\textstyle
(\theta_a - \theta_b)^d \left(
  \sum_{k,l\ge 0} \dfrac{a^k b^l}{k! \, l!}
\right)
=
\sum_{k,l\ge 0} \dfrac{(k-l)^d}{k! \, l!} a^k b^l.
\end{align*}
Therefore
\begin{align*}
f_d &= e^{-y} \psi_d
= e^{-y}
\sum_{k,l\ge0}
\frac{(k-l)^d}{k! \, l!} 
\left( \frac{x+y}{2} \right)^k
\left( \frac{y-x}{2} \right)^l
\\ &=
\sum_{k,l,m \ge 0}
\frac{(-y)^m}{m!} \cdot
\frac{(k-l)^d}{k! \, l!} 
\sum_{\mu,\nu\in\ZZ}
\binom{k}{\mu} \frac{x^{k-\mu} y^\mu}{2^k} 
\binom{l}{\nu} \frac{y^{l-\nu} (-x)^\nu}{2^l} 
\\ &=
\sum_{\substack{k,l,m \ge 0 \\ \mu,\nu\in\ZZ}}
\frac{(k-l)^d (-1)^{m+\nu}}{2^{k+l} k! \, l! \, m!}
\binom{k}{\mu} \binom{l}{\nu}
x^{k-\mu+\nu} y^{m+\mu+l-\nu},
\\
\intertext{(let $u=k-\mu+\nu$ and $v=m+\mu+l-\nu$)}
&=
\!\! \sum_{\substack{u,v,m\ge0 \\ \mu,\nu\in\ZZ}} \!\! 
\frac{(u-v+m+2\mu-2\nu)^d (-1)^{m+\nu}}{ 2^{u+v-m} m! }
\cdot
\frac{x^u y^v}{(u-\nu)! \, \mu! \, (v-m-\mu)! \, \nu!}
\\ &=
\!\! \sum_{\substack{u,v,m\ge0 \\ \mu,\nu\in\ZZ}} \!\! 
\frac{(u-v+m+2\mu-2\nu)^d (-1)^{m+\nu}}{ 2^{u+v-m} m! }
\binom{u}{\nu} \binom{v-m}{\mu} \binom{v}{m}
\frac{x^u y^v}{u! v!}.
\end{align*}
\end{proof}
\begin{lemma}
\label{lem:(19.2)}
We have another closed formula 
\begin{align*}
b^d_{u,v}
&=
\frac{1}{2^{u+v} u! \, v!}
\sum_{k,l\in\ZZ}
\binom{2v}{k} \binom{u}{l} 
(-1)^{k+l} (u+v-k-2l)^d.
\end{align*}
\end{lemma}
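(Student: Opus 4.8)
The plan is to prove Lemma~\ref{lem:(19.2)} by passing to an exponential generating function in an auxiliary variable $z$ and then reading off the coefficient of $x^u y^v$. I would reuse from the proof of Lemma~\ref{lem:(12.12)=(19.1)} the two facts established there, namely that $f_d = e^{-y}\psi_d$ and that $\psi_d = \sum_{k,l \ge 0} \frac{(k-l)^d}{k!\,l!}\, a^k b^l$, where $a = (x+y)/2$ and $b = (y-x)/2$.

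First I would linearize the factor $(k-l)^d$. Writing $(k-l)^d = \left.\partial_z^d\right|_{z=0} e^{(k-l)z}$ and summing the resulting exponential series gives $\psi_d = \left.\partial_z^d\right|_{z=0}\exp(a e^z + b e^{-z})$. Multiplying by $e^{-y} = e^{-(a+b)}$, which is independent of $z$, and using $a e^z + b e^{-z} - (a+b) = a(e^z-1) + b(e^{-z}-1)$, a short computation with $a = (x+y)/2$ and $b=(y-x)/2$ collapses the exponent to $x\sinh z + y(\cosh z - 1)$. Equivalently, I would record the clean identity $\sum_{d \ge 0} \frac{z^d}{d!}\, f_d(x,y) = \exp\bigl(x\sinh z + y(\cosh z - 1)\bigr)$, which is the heart of the argument.

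Next I would extract the coefficient of $x^u y^v$. Since the right-hand side factors as $\sum_{u,v}\frac{(\sinh z)^u (\cosh z-1)^v}{u!\,v!}\, x^u y^v$, one obtains $b^d_{u,v} = \left.\partial_z^d\right|_{z=0}\frac{(\sinh z)^u(\cosh z-1)^v}{u!\,v!}$. The final step is to expand the two hyperbolic factors as finite sums of exponentials. For the sine factor I would use $(\sinh z)^u = 2^{-u}(e^z - e^{-z})^u = 2^{-u}\sum_l \binom{u}{l}(-1)^l e^{(u-2l)z}$. For the other factor the key trick, which produces the $\binom{2v}{k}$ in the statement, is the half-angle substitution: since $\cosh z - 1 = \tfrac12(e^{z/2}-e^{-z/2})^2$, we get $(\cosh z - 1)^v = 2^{-v}(e^{z/2}-e^{-z/2})^{2v} = 2^{-v}\sum_k \binom{2v}{k}(-1)^k e^{(v-k)z}$. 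Multiplying the two expansions, the exponent of $e^{(\cdot)z}$ becomes $u+v-k-2l$, and applying $\left.\partial_z^d\right|_{z=0}$ replaces each $e^{cz}$ by $c^d$, yielding exactly the claimed formula.

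The routine parts are the two binomial expansions and the bookkeeping of factorials and powers of $2$; the one genuinely nonobvious step, and hence the main obstacle, is recognizing that the half-angle substitution $w = e^{z/2}$ is what turns $(\cosh z-1)^v$ into a single binomial sum with upper index $2v$, rather than the messier trinomial expansion of $(e^z + e^{-z} - 2)^v$ that one meets first.
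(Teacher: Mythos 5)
Your proof is correct, and its computational core coincides with the paper's under the substitution $T = e^z$: the paper proves the lemma by expanding the Laurent polynomial $F^d_{u,v}(T) = \frac{1}{2^{u+v}u!\,v!}\,T^{u-v}(T^2-2T+1)^v(1-T^{-2})^u$ in two ways and applying $\theta^d$, $\theta = T\,d/dT$, at $T=1$; under $T = e^z$ one has $\theta = d/dz$, the point $T=1$ becomes $z=0$, and $F^d_{u,v}(e^z) = \frac{(\sinh z)^u(\cosh z - 1)^v}{u!\,v!}$, so your two hyperbolic expansions --- including the half-angle factorization $\cosh z - 1 = \frac{1}{2}\bigl(e^{z/2}-e^{-z/2}\bigr)^2$, which is exactly the paper's observation $T^2-2T+1 = (T-1)^2$ in disguise --- reproduce the paper's binomial expansion term by term, with the same $\binom{2v}{k}$, $\binom{u}{l}$, sign $(-1)^{k+l}$ and exponent $u+v-k-2l$. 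Where you genuinely diverge is in how the $\theta^d$-evaluation is identified with $b^d_{u,v}$: the paper matches the multinomial expansion of $F^d_{u,v}$ against the closed formula of Lemma~\ref{lem:(12.12)=(19.1)}, used as a black box, whereas you bypass that formula entirely, deriving the exponential generating function $\sum_{d\ge 0}\frac{z^d}{d!}\,f_d(x,y) = \exp\bigl(x\sinh z + y(\cosh z - 1)\bigr)$ directly from the intermediate facts $f_d = e^{-y}\psi_d$ and $\psi_d = \partial_z^d\big|_{z=0}\exp(ae^z + be^{-z})$ established inside the proof of Lemma~\ref{lem:(12.12)=(19.1)} (your collapse of the exponent to $x\sinh z + y(\cosh z -1)$ checks out). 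This buys a cleaner conceptual statement --- your EGF identity re-proves Lemma~\ref{lem:(12.12)=(19.1)} in one line and explains why both closed formulas for $b^d_{u,v}$ hold --- at the mild cost of relying on the proof, not merely the statement, of the earlier lemma; the paper's version stays within Laurent-polynomial algebra and avoids any appeal to analytic-looking objects (though your argument, too, is rigorous as formal power series in $z$).
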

\begin{proof}
Put 
\begin{align*}
F^d_{u,v}(T) &= 
\frac{1}{2^{u+v} u!v!} T^{u-v}
(-2T+T^2+1)^v (1-T^{-2})^u.
\end{align*}
By the multinomial expansion, we have
\begin{align*}
F^d_{u,v} (T)
&=
\frac{1}{2^{u+v} u! \, v!}
\sum_{m,\mu,\nu\in\ZZ}
\binom{v}{m,\mu} \binom{u}{\nu}
(-1)^{m+\nu} 2^m
T^{u-v+m+2\mu-2\nu}.
\end{align*}
Let $\theta = T (\partial/\partial T)$, 
and apply $\theta^d$ to the expression above and then substitute $ T = 1 $. Then we have
\begin{align*}
&\theta^d( F^d_{u,v} ) \Bigm|_{T = 1} \\
&=
\frac{1}{2^{u+v} u!v!}
\sum_{m,\mu,\nu\in\ZZ}
\binom{v}{m,\mu} \binom{u}{\nu}
(-1)^{m+\nu} 2^m
(u-v+m+2\mu-2\nu)^d , 
\end{align*}
which is equal to $b^d_{u,v}$ by Lemma \ref{lem:(12.12)=(19.1)}.  
On the other hand 
\begin{align*}
F^d_{u,v}(T)
&=
\frac{T^{u-v}}{2^{u+v} u!v!} (T-1)^{2v} (1-T^{-2})^u
=
\frac{T^{-u-v}}{2^{u+v} u!v!} (T-1)^{2v} (T^2-1)^u
\\ &=
\frac{1}{2^{u+v} u!v!} 
\sum_{k,l\in\ZZ} 
\binom{2v}{k} \binom{u}{l} (-1)^{k+l}
T^{u+v-k-2l}.
\end{align*}
Similar computation as above, i.e., 
applying $\theta^d$ to this expression and substituting $T=1$,
gives us the same value as 
\begin{align*}
\theta_d(F^d_{u,v}) \Big|_{T=1}
&=
\frac{1}{2^{u+v} u! \, v!} 
\sum_{k,l\in\ZZ} 
\binom{2v}{k} \binom{u}{l} (-1)^{k+l}
(u+v-k-2l)^d.
\end{align*}
\end{proof}

By Lemmas~\ref{lem:(19.2)} and \ref{lem:simplify-e(J;u,v)},
we have
\begin{align}
  \nonumber
  & 
\textstyle
  \sum\limits_{u+v=\alpha+\beta} b^d_{u,v} \veps(\alpha,\beta;u,v)
  \\ & \quad =
  \nonumber
\textstyle
  \sum\limits_{u+v=\alpha+\beta}
  \dfrac{1}{2^{u+v} u! \, v!}
  \sum\limits_{k,l\in\ZZ}
  {\displaystyle \binom{2v}{k} \binom{u}{l}}
  (-1)^{k+l} (u+v-k-2l)^d \\[-5pt]
& \nonumber
\hspace*{.4\textwidth} \times 
\textstyle
  \sum\limits_{a+b=v} u! \, v! \, (-1)^a
  {\displaystyle \binom{\alpha}{a} \binom{\beta}{b}}
  \\[3pt] \nonumber
   & 
  \quad =
  \frac{1}{2^{\alpha+\beta}}
\textstyle
  \sum\limits_{k,l,a,b\in\ZZ}
  {\displaystyle \binom{\alpha}{a} \binom{\beta}{b}
  \binom{2a+2b}{k} \binom{\alpha+\beta-a-b}{l} } 
\\
& \hspace*{.4\textwidth} \times
  (-1)^{k+l+a} (\alpha+\beta-k-2l)^d.
  \label{eq:(19.3)}
\end{align}
Now we are ready to show the following lemma which is the goal of this section.
\begin{lemma}
\label{lem:(19.4)}
We have
\begin{align*}
c^d_{\alpha,\beta} =
  \sum_{u+v=\alpha+\beta} b^d_{u,v} \veps(\alpha,\beta;u,v).
\end{align*}
\end{lemma}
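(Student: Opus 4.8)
The plan is to recognize that both sides of the asserted identity are, for each fixed $d$, obtained by applying the Euler operator $\theta = T\,\partial/\partial T$ exactly $d$ times to a fixed Laurent polynomial in a formal variable $T$ and then setting $T=1$; this is precisely the device already exploited in the proof of Lemma~\ref{lem:(19.2)}, and it rests on the elementary identity $\theta^d(T^n)\big|_{T=1} = n^d$. Consequently it suffices to establish a single \emph{generating-function identity} between two Laurent polynomials, which then yields the claimed equality simultaneously for all $d\ge 0$ rather than one $d$ at a time.

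First I would encode the left-hand side. By Equation~\eqref{eq:def-of-c^d_J} we have $c^d_{\alpha,\beta} = \theta^d C(T)\big|_{T=1}$, where
\[
  C(T) = \sum_{k\in\ZZ} \binom{\alpha+\beta}{k}(-1)^k\, T^{\beta-k}.
\]
The binomial theorem collapses this at once to $C(T) = T^{\beta}(1-T^{-1})^{\alpha+\beta} = T^{-\alpha}(T-1)^{\alpha+\beta}$, using $\beta-(\alpha+\beta)=-\alpha$. For the right-hand side, Equation~\eqref{eq:(19.3)} already displays $\sum_{u+v=\alpha+\beta} b^d_{u,v}\veps(\alpha,\beta;u,v)$ as $\theta^d B(T)\big|_{T=1}$, where $B(T)$ is the Laurent polynomial obtained by replacing $(\alpha+\beta-k-2l)^d$ with $T^{\alpha+\beta-k-2l}$ in that formula. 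The whole lemma thus reduces to proving $B(T) = C(T)$.

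Computing $B(T)$ is where the real work lies, and I expect it to be the main obstacle, since it is a fourfold sum over $k,l,a,b$. I would carry out the summations in the order $k$, then $l$, then $a,b$. The $k$-sum yields $(1-T^{-1})^{2(a+b)}$ and the $l$-sum yields $(1-T^{-2})^{\alpha+\beta-a-b}$; factoring out $(1-T^{-2})^{\alpha+\beta}$ then leaves the power $\bigl((1-T^{-1})^2/(1-T^{-2})\bigr)^{a+b}$. The key simplification is the substitution $r=(T-1)/(T+1)$, for which $(1-T^{-1})^2/(1-T^{-2}) = (1-T^{-1})/(1+T^{-1}) = r$; the residual $a$- and $b$-sums then factor cleanly as $(1-r)^\alpha(1+r)^\beta$. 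Finally, using $1-r = 2/(T+1)$, $1+r = 2T/(T+1)$, and $(1-T^{-2})^{\alpha+\beta} = T^{-2(\alpha+\beta)}(T-1)^{\alpha+\beta}(T+1)^{\alpha+\beta}$, every factor of $2$ and of $(T+1)$ cancels, leaving $B(T) = T^{-\alpha}(T-1)^{\alpha+\beta} = C(T)$. Equating the two Laurent polynomials and applying $\theta^d\big|_{T=1}$ finishes the argument. The only genuine hazard is the bookkeeping of the exponents of $T$ and of the prefactor $2^{\alpha+\beta}$; the algebraic heart of the matter is the single substitution $r=(T-1)/(T+1)$, which turns the awkward ratio into a tractable linear-fractional expression and makes all cancellations transparent.
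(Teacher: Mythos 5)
Your proposal is correct and takes essentially the same route as the paper: the paper likewise reduces the lemma to an equality of Laurent polynomials via $\theta^d(T^n)\big|_{T=1}=n^d$ starting from Equation~(\ref{eq:(19.3)}), and its auxiliary function $F^d_{\alpha,\beta}(T)=\bigl(\tfrac{T^2-1}{2T}\bigr)^{\alpha+\beta}\bigl(1-\tfrac{(T-1)^2}{T^2-1}\bigr)^{\alpha}\bigl(1+\tfrac{(T-1)^2}{T^2-1}\bigr)^{\beta}$ is exactly your intermediate expression after the $k$- and $l$-summations, since $(T-1)^2/(T^2-1)$ is your substitution $r=(T-1)/(T+1)$, both sides collapsing to $T^{-\alpha}(T-1)^{\alpha+\beta}$. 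The only difference is presentational: the paper posits $F^d_{\alpha,\beta}$ and expands it in two ways, whereas you derive the same rational function constructively by resumming the fourfold sum.
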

\begin{proof}
Set 
\begin{align*}
F^d_{\alpha,\beta}(T)
&=
\left( \frac{T^2-1}{2T} \right)^{\alpha+\beta} 
\bigl( - \dfrac{(T-1)^2}{(T^2-1)} +1 \bigr)^\alpha
\bigl( \dfrac{(T-1)^2}{(T^2-1)} + 1 \bigr)^\beta.
\end{align*}
First, by the binomial expansion, we have
\begin{align*}
F^d_{\alpha,\beta}(T) 
&=
\left( \frac{T^2-1}{2T} \right)^{\alpha+\beta}
\sum_{a\in\ZZ}
\binom{\alpha}{a} (-1)^a \dfrac{(T-1)^{2a}}{(T^2-1)^a}
\sum_{b\in\ZZ}
\binom{\beta}{b} \dfrac{(T-1)^{2b}}{(T^2-1)^b}
\\ &=
\frac{1}{(2T)^{\alpha+\beta}}
\sum_{a,b\in\ZZ} 
\binom{\alpha}{a} \binom{\beta}{b} (-1)^a
(T-1)^{2a+2b} (T^2-1)^{\alpha+\beta-a-b}
\\ &=
\frac{1}{(2T)^{\alpha+\beta}}
\sum_{a,b\in\ZZ} 
\binom{\alpha}{a} \binom{\beta}{b} (-1)^a
\sum_{k\in\ZZ}
\binom{2a+2b}{k} T^{2a+2b-k} (-1)^k
\\ & \hspace{10ex}
\times
\sum_{l\in\ZZ}
\binom{\alpha+\beta-a-b}{l} T^{2(\alpha+\beta-a-b-l)} (-1)^l
\\ &=
\frac{1}{2^{\alpha+\beta}}
\sum_{a,b,k,l\in\ZZ} 
\binom{\alpha}{a} \binom{\beta}{b} 
\binom{2a+2b}{k} \binom{\alpha+\beta-a-b}{l} 
\\ & \hspace{20ex}
\times
(-1)^{a+k+l}
T^{\alpha+\beta-k-2l}.
\end{align*}
We apply $\theta^d$ to this expression, and let $T=1$.
Then we have
\begin{align*}
\theta^d (F^d_{u,v}) \Big|_{T=1} = 
\sum_{u+v=\alpha+\beta} b^d_{u,v} \veps(\alpha,\beta;u,v),
\end{align*}
by Equation~(\ref{eq:(19.3)}).
Second, we proceed as 
\begin{align*}
F^d_{\alpha,\beta} (T)
&=
\frac{1}{(2T)^{\alpha+\beta}}
( -(T-1)^2 + (T^2-1) )^\alpha
( (T-1)^2 + (T^2-1) )^\beta
\\ &=
\frac{1}{(2T)^{\alpha+\beta}}
(2T-2)^\alpha (2T^2 - 2T)^\beta
\\ &=
\frac{(T-1)^{\alpha+\beta}}{T^\alpha}
= 
\sum_{k\in\ZZ}
\binom{\alpha+\beta}{k}
T^{\alpha+\beta-k}
(-1)^k   
\cdot
\frac{1}{T^\alpha}.
\end{align*}
We apply $\theta^d$ to this expression, and let $T=1$.
Then we have
\begin{align*}
\theta^d (F^d_{u,v}) \Big|_{T=1} = 
c^d_{\alpha,\beta},
\end{align*}
by the definition of $c^d_{\alpha,\beta}$ 
(Equation~\eqref{eq:def-of-c^d_J}).
\end{proof}


\bibliographystyle{amsplain}

\def\cprime{$'$} \def\Dbar{\leavevmode\lower.6ex\hbox to 0pt{\hskip-.23ex
  \accent"16\hss}D} \def\polhk#1{\setbox0=\hbox{#1}{\ooalign{\hidewidth
  \lower1.5ex\hbox{`}\hidewidth\crcr\unhbox0}}}
\providecommand{\bysame}{\leavevmode\hbox to3em{\hrulefill}\thinspace}

\end{document}